\def\R{\mathbb R}
\def\Z{\mathbb Z}
\def\G{\mathsf{G}}
\def\pr{{\rm pr }}
\def\int{{\rm int\,}}
\def\P{\mathcal P}
\def\eps{\varepsilon}
\def\PSL{{\rm PSL}}
\def\diam{{\rm diam\,}}
\def\T{{\mathcal T}}
\def\X{{\mathfrak X}}
\def\H{\mathbb H}
\def\X{\mathcal X}
\def\P{{\mathcal P}}
\newcommand{\SL}{{\rm SL}}
\newcommand{\C}{{\mathscr C}}
\newtheorem{theorem}{Theorem}[section]
\newtheorem{definition}[theorem]{Definition}
\newtheorem{proposition}[theorem]{Proposition}
\newtheorem{remark}[theorem]{Remark}
\newtheorem{corollary}[theorem]{Corollary}
\newtheorem{lemma}[theorem]{Lemma}
\begin{document} \vskip -2cm
			\title{Markov partitions for the geodesic flow on compact Riemann surfaces of constant negative curvature}
	\author{{\sc Huynh M. Hien}\\
	Department of Mathematics and Statistics,\\
		Quy Nhon University,\\
		 170 An Duong Vuong, Quy Nhon, Vietnam,\\
		e-mail: huynhminhhien@qnu.edu.vn}
	\date{}
	\maketitle 
	\begin{abstract}
		It is well-known that  hyperbolic flows admit  Markov partitions of arbitrarily small size. However, the constructions of Markov partitions for general hyperbolic flows are very abstract and not easy to understand. To establish a more detailed understanding of Markov partitions, in this paper we consider the geodesic flow on Riemann surfaces of constant negative curvature. We provide a rigorous construction of Markov partitions for this hyperbolic flow with explicit forms of rectangles and local cross sections. The local product structure is also calculated in detail.  
	\end{abstract}
{\bf Keywords:} Markov partitions; Symbolic dynamics; Geodesic flows; Constant negative curvatures
	
	\medskip
	\noindent	{\bf 2020 MSC: } {37B10,  37D40, 37D20,  57K32} 

\tableofcontents

%\tableofcontents
\section{Introduction} Symbolic dynamics has had a great history development and is a very useful method to 
study general dynamical systems. Instead of working on general dynamical systems, one can consider respective symbolic systems via symbolic dynamics.
The symbolic dynamics of a dynamical system is constructed from Markov partitions, which have been attracting a lot of mathematicians.  In 1967, a Markov partition for hyperbolic diffeomorphisms on 2-torus was constructed by Adler and Weiss in \cite{AW}.
Then Sinai \cite{Sinai,Sinai1} used successive approximations to construct a Markov partition for arbitrary $C$-diffeomorphisms. 
Bowen \cite{bo-diff} 
used Sinai's method to give a construction of Markov partitions for Smale's Axiom A diffeomorphisms with the help of Smale's Spectral Decomposition Theorem in \cite{smale}. In the case
of $C$-flows on three-dimensional manifolds, a construction of Markov partitions
was given by Ratner \cite{Ratner0}. The author also introduced a Markov partition for transitive Anosov flows (so-called $C$-flows) on $n$-dimensional manifolds \cite{Ratner}. In 1973, Bowen  modified and generalized the construction in \cite{bo-diff} to have a Markov partition for  
$C^1$-hyperbolic flows in \cite{bo-symb}, which has become a classic reference.
Pollicott \cite{Po87} then constructed symbolic dynamics for Smale flows, which is  a class of continuous flows on metric spaces provided a local product structure. 
The result generalizes Bowen's construction of symbolic dynamics for $C^1$-hyperbolic flows in \cite{bo-symb}.
The problem is that all the constructions of Markov partitions mentioned above are very abstract and not easy to understand.

Pollicott and Sharp have  found symbolic dynamics very useful in counting closed orbits for hyperbolic flows \cite{PS2}  and
presenting asymptotic estimates
for pairs of closed geodesics whose length differences lie in a prescribed
family of shrinking intervals \cite{PS1}; see also \cite{Po95,PS98} for other applications. Under supervision of Knieper, Bieder in his PhD thesis \cite{bieder} used symbolic dynamics to construct partner orbits for hyperbolic flows. 
The purpose of construction of symbolic dynamics is to prove that a hyperbolic flow is semi-conjugated to a hyperbolic symbolic flow,
 in which the symbolic dynamics must be constructed from a Markov partition. A Markov partition is family of rectangles satisfying the Markov property. 
 Then one can associate to a hyperbolic flow a mixing subshift of finite type
$\sigma: \Lambda\rightarrow\Lambda $
and a  H\"older continuous function 
$r : \Lambda\rightarrow\R$ such that, with at
most a finite number of exceptions, the prime periodic orbit 
$\{x, \sigma x, . . . , \sigma^{k-1}x\}$ corresponds to the prime periodic orbit
$\gamma$ whose word length and length are given by
$|\gamma|=k$ and $l_\gamma=r^k(x)=r(x)+r(\sigma x)+\cdots+r(\sigma^{k-1}x)$, respectively;
where $\Lambda=\{x =(x_n)_{n=-\infty}^\infty : A(x_n,x_{n+1})=1,\forall n\in {\mathbb Z}\}$,
$A$ is the corresponding adjacency matrix with entries 0 or 1 of the Markov partition
and $x_n$ are the symbols of rectangles in the Markov partition. 
Thus instead of working on  the original hyperbolic flow,  this viewpoint offers several advantages.

The main tool for the construction of the hyperbolic symbolic flow is the existence of a Markov partition for a basic set.   However, for general hyperbolic flows,
Markov partitions are not explicit and their constructions are not easy to understand as mentioned above. For instance, there are several results in \cite{bo-symb} which need to be carefully verified. 
To establish  a more detailed understanding of Markov partitions, in this paper we consider a concrete hyperbolic dynamical system, namely the geodesic flow on compact Riemann surfaces of constant negative curvature. We introduce explicit forms of rectangles as well as local cross sections. This leads to a more explicit and intuitive Markov partition for the system. Coordinalization of Poincar\'e sections helps us calculate the local product structure in detail and prove the existence
of a pre-Markov partition.
Especially,  we even could somewhat simplify \cite{bo-symb}, in that we do not need several of the lemmas in this work.  In addition, all important results in \cite{bo-symb} in relevant to Markov partitions are rigorously verified.

The paper is organized as follows. In the next section, we give an introduction to the theory of the geodesic flow on compact factors of the hyperbolic plane with auxiliary results that will be used in this paper. Section \ref{lpss} studies local product structure of the flow with specific calculations. Section \ref{lcssection} presents explicit forms of local cross sections and rectangles.  Expansivity of the flow is studied in Section \ref{expansection}. The final section provides a rigorous construction of Markov partitions for the flow.

\section{The geodesic flow on compact factors of the hyperbolic plane}
We consider the geodesic flow on compact Riemann surfaces of constant negative curvature. 
It is well-known that any compact orientable surface 
with a metric of constant negative curvature is isometric to a factor $\Gamma\backslash \H^2$, 
where $\H^2=\{z=x+iy\in {\mathbb C}:\, y>0\}$ is the hyperbolic plane endowed  
with the hyperbolic metric $ds^2=\frac{dx^2+dy^2}{y^2}$ and
$\Gamma $ is a discrete subgroup of the projective Lie group $\PSL(2,\R)=\SL(2,\R)/\{\pm E_2\}$; 
here $\SL(2,\R)$ is the group of all real $2\times 2$ matrices with unity determinant, 
and $E_2$ denotes the unit matrix.  The group $\PSL(2,\R)$  acts transitively on $\H^2$ by 
M\"obius transformations
$z\mapsto \frac{az+b}{cz+d}$.
If the action is free (of fixed points), then the factor $\Gamma\backslash\H^2$  has a Riemann surface structure.
Such a surface is a closed Riemann surface of genus at least $2$ 
and has the hyperbolic plane $\H^2$ as the universal covering. 
The geodesic flow $(\varphi_t^\X)_{t\in \R}$ on the unit tangent bundle $\X=T^1(\Gamma\backslash\H^2)$
goes along the unit speed geodesics on $\Gamma\backslash\H^2$.

On the other hand, the unit tangent bundle $T^1(\Gamma\backslash\H ^2)$
is isometric to the quotient space 
$\Gamma\backslash \PSL(2,\R)=\{\Gamma g,g\in\PSL(2,\R)\}$, 
which is the system of right co-sets of $\Gamma$ in $\PSL(2,\R)$, by an isometry
$\Xi$.
Then  the geodesic flow $(\varphi_t^\X)_{t\in\R}$ can be equivalently described as the natural 
`quotient flow'
\begin{equation}\label{varphix}\varphi^X_t(\Gamma g)=\Gamma g a_t
\end{equation} 
on $X=\Gamma\backslash\PSL(2,\R)$  associated to the flow $\phi_t(g)=g a_t$ on $\PSL(2,\R)$
by the conjugate relation 
\[\varphi_t^\X=\Xi^{-1}\circ\varphi_t^X\circ\Xi\quad \mbox{for all}\quad t\in\R.\]
Here $a_t\in\PSL(2,\R)$ denotes the equivalence class obtained from the matrix $A_t=\Big({\scriptsize\begin{array}{cc} 
		e^{t/2} & 0\\ 0 & e^{-t/2}
\end{array}}\Big)\in\SL(2,\R)$. 

\smallskip 
There are some more advantages to work on $X=\Gamma\backslash\PSL(2,\R)$
rather than on $\X=T^1(\Gamma\backslash\H^2)$. One can calculate explicitly the stable and unstable manifolds 
at a point $x$ to be
\begin{equation}\label{Wsu}
	W^s_X(x)=\{\theta^X_t(x),t\in\R\}
	\quad \mbox{and}\quad W^u_X(x)=\{\eta^X_t(x), t\in\R\},
\end{equation} 
where $(\theta^X_t)_{t\in\R}$ and $(\eta^X_t)_{t\in\R}$
are the {\em stable horocycle flow} and {\em unstable horocycle flow}
defined by $\theta^X_t(\Gamma g)=\Gamma g b_t$ and $\eta^X_t(\Gamma g)=\Gamma g c_t$;  
here $b_t,c_t\in\PSL(2,\R)$ denote
the equivalence classes obtained from 
$B_t=\big({\scriptsize\begin{array}{cc}1 &t\\ 0&1 \end{array} }\big), \ C_t=\big({\scriptsize\begin{array}{cc}
		1&0\\ t&1
\end{array}}\big)\in\SL(2,\R)$. 
The flow $(\varphi^X_t)_{t\in\R}$
is hyperbolic, that is, 
for every $x\in X$ there exists an orthogonal and $(\varphi_t^X)_{t\in\R}$-stable splitting of the tangent space
$T_xX$
\[T_x X= E^0(x)\oplus E^s(x)\oplus E^u(x)\]
such that the differential of the flow $(\varphi_t^X)_{t\in \R}$ is uniformly expanding on $E^u(x)$, uniformly contracting on $E^s(x)$ and isometric on $E^0(x)={\rm span}\big\{ \frac{d}{dt}\varphi_t^X(x)|_{t=0}\big\}$. One can choose  
\begin{eqnarray*}
	E^s(x)  =  {\rm span}\Big\{\frac{d}{dt}\,\theta^X_t(x)\Big|_{t=0}\Big\}
	\quad\mbox{and}\quad 
	E^u(x)  =  {\rm span}\Big\{\frac{d}{dt}\,\eta^X_t(x)\Big|_{t=0}\Big\}.
\end{eqnarray*}

General references for this section are \cite{bedkeanser,einsward,KatHas}, 
and these works may be consulted for the proofs to all results which are stated above.
In what follows, we will drop the superscript $X$ from $(\varphi^X_t)_{t\in\R}, (\theta^X_t)_{t\in\R}, (\eta^X_t)_{t\in\R}$ to simplify notation.
%In the rest of this section  we collect some useful technical results. 

\subsection{Distance on $\Gamma\backslash\PSL(2,\R)$}
\begin{lemma}\label{at}
	There is a natural Riemannian metric on $\G=\PSL(2,\R)$
	such that the induced metric function $d_\G$ is left-invariant under $\G$ and 
	\[d_\G(a_t,e)=\frac{1}{\sqrt 2}|t|, \quad d_\G(b_t,e)\leq |t|,\quad d_\G(c_t,e)\leq |t|\quad\mbox{for all}\quad t\in\R, \]
	where $e=\pi(E_2)$ is the unity of $\G$.
\end{lemma}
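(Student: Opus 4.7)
I would build $d_\G$ from a left-invariant Riemannian metric on $\G=\PSL(2,\R)$: fix an inner product on $T_e\G=\mathfrak{sl}(2,\R)$ and transport it to every tangent space by left-translations. The distance $d_\G$ so induced is automatically left-invariant. A natural choice of inner product is $\langle X,Y\rangle=\tr(X^{\top}Y)$. Writing the infinitesimal generators
\[
H=\tfrac{d}{dt}\Big|_{t=0}A_t=\tfrac{1}{2}\begin{pmatrix}1 & 0\\ 0 & -1\end{pmatrix},\qquad B=\begin{pmatrix}0 & 1\\ 0 & 0\end{pmatrix},\qquad C=\begin{pmatrix}0 & 0\\ 1 & 0\end{pmatrix},
\]
a short calculation gives $\|H\|=1/\sqrt{2}$ and $\|B\|=\|C\|=1$.

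For the upper bounds, the one-parameter-subgroup curves $s\mapsto a_s$, $s\mapsto b_s$, $s\mapsto c_s$ emanating from $e$ have, by left-invariance, constant speeds $\|H\|$, $\|B\|$, $\|C\|$. Restricted to $s\in[0,t]$ they provide explicit paths of lengths $|t|/\sqrt{2}$, $|t|$, $|t|$ from $e$ to $a_t$, $b_t$, $c_t$, which immediately yields the three inequalities of the lemma (and the $\le$ half of the equality for $a_t$).

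The matching lower bound $d_\G(a_t,e)\ge|t|/\sqrt{2}$ is the main obstacle. For this I would use the M\"obius action $\pi\colon\G\to\H^2$, $\pi(g)=g\cdot i$, and observe first that $\pi(a_t)=e^{t}i$, so $d_{\H^2}(\pi(a_t),\pi(e))=|t|$. Then I would compute the differential $d\pi_e\colon\mathfrak{sl}(2,\R)\to T_i\H^2$: it vanishes on the Lie algebra $\mathfrak{k}=\mathrm{span}(B-C)$ of the stabilizer of $i$, and on its $\langle\cdot,\cdot\rangle$-orthogonal complement $\p=\mathrm{span}(H,B+C)$ it scales lengths by exactly $\sqrt{2}$ (direct check on $H$ and $B+C$, which are orthogonal in both $\langle\cdot,\cdot\rangle$ and in the hyperbolic metric at $i$). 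Since $\pi$ intertwines left-translations in $\G$ with hyperbolic isometries of $\H^2$, transporting this estimate along $\G$ shows that $\pi$ is globally $\sqrt{2}$-Lipschitz, so every curve $\gamma$ from $e$ to $a_t$ in $\G$ satisfies $|t|=d_{\H^2}(i,e^t i)\le L(\pi\circ\gamma)\le\sqrt{2}\,L(\gamma)$. Taking the infimum gives $d_\G(a_t,e)\ge|t|/\sqrt{2}$, and combined with the upper bound this yields the equality. The delicate point in the whole argument is the calibration factor $\sqrt{2}$, which reflects the contributions to $d_\G$ of the contracting/expanding horocycle directions that are absent from $d_{\H^2}$.
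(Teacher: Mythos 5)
Your proposal is correct, and since the paper offers no proof of its own (it simply points to \cite[Subsection~9.3]{einsward}), your argument is essentially a reconstruction of the standard treatment there: a left-invariant metric induced by $\langle X,Y\rangle=\tr(X^{\top}Y)$ at the identity, curve-length estimates for the three one-parameter subgroups giving the upper bounds (with $\|H\|=1/\sqrt2$, $\|B\|=\|C\|=1$), and the orbit map $g\mapsto g\cdot i$ onto $\H^2$ as a $\sqrt2$-Lipschitz submersion to obtain the matching lower bound for $d_\G(a_t,e)$. The computations (norms of the generators, the $\sqrt2$ calibration of $d\pi_e$ on $\mathfrak p=\mathrm{span}(H,B+C)$, and the left-equivariance argument extending it globally) check out.
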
	
See \cite[Subsection 9.3]{einsward} for more details.

We define a metric function $d_{X}$ on $X=\Gamma\backslash\PSL(2,\R)$ by 
\begin{equation}\label{dX} d_{X}(x_1, x_2)
	=\inf_{\gamma_1, \gamma_2\in\Gamma} d_{\G}(\gamma_1 g_1, \gamma_2 g_2)
	=\inf_{\gamma\in\Gamma} d_{\G}(g_1, \gamma g_2), 
\end{equation}
where $x_1=\Gamma g_1 $, $x_2=\Gamma g_2$.
In fact, if $X$ is compact, one can prove that the infimum is a minimum:
\begin{equation*}
	d_{X}(x_1, x_2)=\min_{\gamma\in\Gamma} d_{\G}(g_1, \gamma g_2).
\end{equation*} 

\begin{lemma}\label{tslm}
	For any $x\in X$ and $t,s\in \R$, one has
	\[d_X(\varphi_{t}(x),\varphi_{s}(x))\leq \frac{1}{\sqrt 2}|t-s|. \]
\end{lemma}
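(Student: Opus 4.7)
The plan is to reduce the inequality on the quotient $X=\Gamma\backslash\G$ to a computation inside $\G$ and then invoke Lemma \ref{at}. Writing $x=\Gamma g$ for some $g\in\G$, by \eqref{varphix} we have $\varphi_t(x)=\Gamma g a_t$ and $\varphi_s(x)=\Gamma g a_s$. Applying the definition \eqref{dX} of $d_X$ and restricting the infimum to $\gamma=e$ yields the upper bound
\[
d_X(\varphi_t(x),\varphi_s(x))=\inf_{\gamma\in\Gamma}d_{\G}(g a_t,\gamma g a_s)\leq d_{\G}(g a_t,g a_s).
\]

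Next I would exploit the left-invariance of $d_{\G}$ twice. First, left-multiplication by $g^{-1}$ gives $d_{\G}(g a_t,g a_s)=d_{\G}(a_t,a_s)$. Since $\{a_r\}_{r\in\R}$ is a one-parameter subgroup of $\G$, we have $a_t^{-1}a_s=a_{s-t}$, and left-multiplication by $a_t^{-1}$ gives $d_{\G}(a_t,a_s)=d_{\G}(e,a_{s-t})$. Finally, Lemma \ref{at} delivers $d_{\G}(a_{s-t},e)=\tfrac{1}{\sqrt 2}|s-t|$, which assembles into the claimed bound.

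There is no serious obstacle here; this is a direct consequence of Lemma \ref{at} together with the two structural facts that $d_\G$ is left-invariant and that $a_t$ is a one-parameter subgroup. The only point that requires a little care is making sure the left-invariance is being applied to the correct pair of points, and that the group law in $\PSL(2,\R)$ indeed gives $a_t\cdot a_s=a_{t+s}$ (which is immediate from the corresponding identity in $\SL(2,\R)$ for the diagonal matrices $A_t$, and passes to the quotient by $\{\pm E_2\}$). The proof should therefore occupy only a couple of lines.
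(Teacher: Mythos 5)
Your proof is correct and follows essentially the same route as the paper: bound $d_X$ by $d_\G(ga_t,ga_s)$ using the definition \eqref{dX}, apply left-invariance twice (via $g^{-1}$ and then $a_t^{-1}$, which is what the paper's compressed chain $d_\G(ga_t,ga_s)=d_\G(a_t,a_s)=d_\G(a_{t-s},e)$ does), and finish with Lemma \ref{at}. Nothing to add.
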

\begin{proof}
	Suppose $x=\Gamma g$ for some $g\in \PSL(2,\R)$, then 
	\begin{align*}
		d_X(\varphi_{t}(x),\varphi_{s}(x))&=
		d_X(\Gamma g a_{t},\Gamma g a_{s})
		\leq d_\G( g a_{t}, g a_{s})\\
		&=
		d_\G(a_{t},a_s)=d_\G(a_{t-s},e)=\frac{1}{\sqrt 2}|t-s|.
	\end{align*}
\end{proof}

It is well-known that the Riemann surface $\Gamma\backslash\H^2$ is compact if and only if 
the quotient space  $X=\Gamma\backslash\PSL(2,\R)$ is compact. It is possible to derive 
a uniform lower bound on $d_{\G}(g, \gamma g)$ 
for $g\in \PSL(2,\R)$ and $\gamma\in\Gamma\setminus\{e\}$.

\begin{lemma}\label{sigma_0}
	If the space $X=\Gamma\backslash\PSL(2,\R)$ is compact, then there exists $0<\sigma_*<1$ such that
	\[d_\G(\gamma g, g)>\sigma_*\quad\mbox{for all}\quad \gamma\in\Gamma\setminus\{e\}. \]
\end{lemma}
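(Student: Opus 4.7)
My plan is to argue by contradiction, combining compactness of $X$ with the fact that $\Gamma$ acts freely and properly discontinuously on $\G=\PSL(2,\R)$, so that the projection $\pi\colon\G\to X$ is a covering map.

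Suppose the conclusion fails. Then, since we may always shrink a candidate bound, we obtain sequences $g_n\in\G$ and $\gamma_n\in\Gamma\setminus\{e\}$ with
\[
d_\G(\gamma_n g_n,g_n)\longrightarrow 0.
\]
Set $x_n=\Gamma g_n\in X$. Because $X$ is compact, a subsequence (which I still denote $x_n$) converges to some $x_*=\Gamma g_*$. Since $\pi$ is a covering map, I can choose a neighborhood $U$ of $g_*$ in $\G$ that maps homeomorphically onto a neighborhood of $x_*$ in $X$; for $n$ large enough there is a unique lift $h_n\in U$ of $x_n$, i.e.\ some $\gamma'_n\in\Gamma$ with $h_n=\gamma'_n g_n\to g_*$ in $\G$.

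Now use the left-invariance of $d_\G$ under $\G$ (and in particular under $\Gamma$). Setting $\delta_n=\gamma'_n\gamma_n(\gamma'_n)^{-1}\in\Gamma\setminus\{e\}$, we compute
\[
d_\G(\delta_n h_n,h_n)=d_\G(\gamma'_n\gamma_n g_n,\gamma'_n g_n)=d_\G(\gamma_n g_n,g_n)\longrightarrow 0.
\]
Hence $\delta_n h_n\to g_*$ as well, and consequently $\delta_n=(\delta_n h_n)h_n^{-1}\to g_*g_*^{-1}=e$ in $\G$, using continuity of multiplication and inversion. But $\Gamma$ is a discrete subgroup of $\G$, so this forces $\delta_n=e$ for all large $n$, and therefore $\gamma_n=e$, contradicting $\gamma_n\in\Gamma\setminus\{e\}$. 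This gives some $\sigma>0$ with $d_\G(\gamma g,g)>\sigma$ for all $g\in\G$ and $\gamma\in\Gamma\setminus\{e\}$; replacing $\sigma$ by $\min\{\sigma,\tfrac12\}$ yields the desired $\sigma_*\in(0,1)$.

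The only delicate point is the lifting step: one must ensure that the convergent sequence $x_n\to x_*$ in $X$ can be lifted to a sequence $h_n\to g_*$ in $\G$. This relies on the covering property of $\pi$, which in turn uses that $\Gamma$ acts freely and properly discontinuously on $\G$ (a standing assumption under which $\Gamma\backslash\H^2$ carries a Riemann surface structure, as recalled earlier in the section). Once this lifting is available, the rest of the argument is a direct consequence of left-invariance of $d_\G$ and discreteness of $\Gamma$.
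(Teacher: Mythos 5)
Your argument is correct and self-contained. Note first that the paper does not actually supply a proof of this lemma; it only refers the reader to Ratcliffe for the analogous statement about $\Gamma\backslash\H^2$. Your argument by contradiction---compactness of $X$, lifting a convergent subsequence through the covering $\pi\colon\G\to X$, transporting the displacement via left-invariance and a conjugation $\delta_n=\gamma_n'\gamma_n(\gamma_n')^{-1}$, and then invoking discreteness to force $\delta_n=e$---is the standard route and fills in exactly what the paper leaves implicit.

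One small remark on the ``delicate point'' you flag: the covering property of $\pi\colon\G\to\Gamma\backslash\G$ does not really depend on the hypothesis that $\Gamma$ acts freely on $\H^2$ (which is what the paper invokes to give $\Gamma\backslash\H^2$ a Riemann surface structure). The action of $\Gamma$ on $\G$ by \emph{left translation} is automatically free, since $\gamma g=g$ forces $\gamma=e$; and any discrete subgroup of a Lie group acts properly discontinuously by left translation, because $\{\gamma:\gamma K\cap K\neq\varnothing\}\subset\Gamma\cap KK^{-1}$ is finite for compact $K$. So discreteness of $\Gamma$ alone already makes $\pi$ a covering map, and your lifting step is justified without appealing to the freeness of the $\H^2$-action. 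With that clarification, the proof is complete and can stand in for the omitted argument in the paper.
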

The number $\sigma_*$ is called an injectivity radius. See \cite[Lemma 1, p. 237]{rat} for a similar result on $\Gamma\backslash\H^2$.

\subsection{Poincar\'e sections}\label{Poincsec} 

\begin{definition}[Poincar\'e section]\label{Poindn1}\rm 
	Let $x\in X$ and $\eps>0$. The {\em (closed)  Poincar\'e section} of radius $\eps$ at $x$ is defined by
	\begin{equation*}
		P_\eps(x)=\{(\theta_s\circ \eta_u)(x): |u|\leq\eps, |s| \leq\eps\}=\{\Gamma g c_ub_s : |u|\leq\eps, |s| \leq\eps\},
	\end{equation*}
	where $g\in \G$ is such that $x=\Gamma g$; see Figure \ref{lcs2}\,(a) for an illustration.
	
	Another version of Poincar\'e section is 	
	\[\widetilde P_\eps(x)=\{(\eta_u\circ \theta_s)(x): |u|\leq\eps, |s| \leq\eps\}=\{\Gamma g b_s c_u: |s|\leq \eps,|u|\leq \eps \}. \] 
\end{definition}
Note that both sets do not depend on the choice of $g\in \G$ such that $x=\Gamma g$. Similarly to open Poincar\'e sections in \cite{partII} one can coordinalize Poincar\'e sections in the case that the radius is small enough.
\begin{lemma} Let $X$ be compact, $\eps\in\, (0,\sigma_*/4)$, and $x=\Gamma g$ for $g\in \G$. 
	
	\noindent
	(a) For every $y\in P_\eps(x)$ there exists a unique couple  $(u,s)\in\, [-\eps,\eps]\,\times\,[-\eps,\eps]$  such that
	$y=\Gamma gc_u b_s$. 
	Then we write $y=(u,s)_x$.
	
	\noindent 	(b) For every $y\in \widetilde P_\eps(x)$ there exists a unique couple  $(s,u)\in\, [-\eps,\eps]\,\times\,[-\eps,\eps]$  such that $y=\Gamma g b_sc_u$. 
	Then we write $y=(s,u)'_x$.
\end{lemma}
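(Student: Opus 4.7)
The existence of representing coordinates is immediate from the definition of $P_\eps(x)$ (respectively $\widetilde P_\eps(x)$), so the content of the lemma is uniqueness. My plan is to reduce uniqueness to the injectivity-radius lemma (Lemma \ref{sigma_0}) via the left-invariance of $d_\G$ from Lemma \ref{at}.

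For part (a), suppose $\Gamma g c_{u_1} b_{s_1} = \Gamma g c_{u_2} b_{s_2}$ with both $(u_i,s_i)\in[-\eps,\eps]\times[-\eps,\eps]$. Then there exists $\gamma\in\Gamma$ with $g c_{u_1} b_{s_1} = \gamma g c_{u_2} b_{s_2}$, which rearranges to
$$\gamma g = g\, c_{u_1}\, b_{s_1-s_2}\, c_{-u_2}.$$
Applying left-invariance of $d_\G$, the triangle inequality, and the bounds in Lemma \ref{at}, I would estimate
$$d_\G(\gamma g, g) = d_\G\bigl(c_{u_1} b_{s_1-s_2} c_{-u_2},\, e\bigr) \leq |u_1| + |s_1-s_2| + |u_2| \leq 4\eps < \sigma_*.$$
Lemma \ref{sigma_0} then forces $\gamma = e$, so after cancelling $g$ on the left we get $c_{u_1} b_{s_1} = c_{u_2} b_{s_2}$, equivalently $c_{u_1-u_2} = b_{s_2-s_1}$ in $\PSL(2,\R)$. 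A direct matrix computation shows the only element lying in both one-parameter subgroups $\{c_t\}$ and $\{b_t\}$ is the identity: matching the $(2,1)$ and $(1,2)$ entries of $C_t = \pm B_s$ forces $t=s=0$ (the sign $-$ is ruled out by the diagonal entries). This yields $u_1 = u_2$ and $s_1 = s_2$.

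Part (b) follows from the identical argument with the roles of $c_u$ and $b_s$ exchanged; the same bound $4\eps$ and the same final matrix check apply. The only place where care is needed is making the triangle inequality produce the clean bound $4\eps$, and this is precisely what motivates the hypothesis $\eps < \sigma_*/4$. I expect this constant-tracking to be the main (and really only) obstacle — the remainder of the argument is purely formal once left-invariance and the injectivity radius are in hand.
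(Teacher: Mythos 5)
Your proposal is correct and is essentially the argument one expects: the paper defers the proof to Lemma 2.1 of \cite{partII}, but the technique (reduce to $d_\G(\gamma g, g)<\sigma_*$ via left-invariance and the $|c_t|,|b_t|\le|t|$ bounds from Lemma~\ref{at}, invoke Lemma~\ref{sigma_0} to kill $\gamma$, then finish by an explicit matrix comparison in $\PSL(2,\R)$) is exactly the one the author uses repeatedly elsewhere in the paper, e.g.\ in the uniqueness steps of Lemma~\ref{prodstr} and Lemma~\ref{lcexpl}. Your constant-tracking ($|u_1|+|s_1-s_2|+|u_2|\le 4\eps<\sigma_*$) and the observation that the $-$ sign in $C_t=\pm B_s$ is ruled out by the diagonal entries are both correct.
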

See \cite[Lemma 2.1]{partII} for a proof.

\subsection{Some auxiliary results}
\begin{lemma}\label{decompo}
	Let $g=[G]\in {\rm PSL}(2, \R)$ for $G=\scriptsize \Big(\begin{array}{cc} a & b \\ c & d\end{array}\Big)
	\in {\rm SL}(2, \R)$. 
	\begin{itemize}
		\item[(a)] If $a\neq 0$, then $g=c_u b_s a_t$ for 
		\begin{equation}\label{ust-def}
			t=2\ln |a|,\quad s=ab,\quad u=\frac{c}{a}. 
		\end{equation} 
		\item[(b)] If $d\neq 0$, then $g=a_tb_s c_u$ for 
		\[ t=-2\ln |d|,\quad s={d},\quad u=\frac{c}d. \] 
	\end{itemize} 
\end{lemma}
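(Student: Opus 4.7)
The strategy is a direct matrix computation in $\SL(2,\R)$, followed by matching against $G$ up to the sign ambiguity inherent in working in $\PSL(2,\R)=\SL(2,\R)/\{\pm E_2\}$. For part (a), I compute the product
\[
C_u B_s A_t
=\begin{pmatrix}1 & 0\\ u & 1\end{pmatrix}
\begin{pmatrix}1 & s\\ 0 & 1\end{pmatrix}
\begin{pmatrix}e^{t/2} & 0\\ 0 & e^{-t/2}\end{pmatrix}
=\begin{pmatrix}e^{t/2} & se^{-t/2}\\ ue^{t/2} & (us+1)e^{-t/2}\end{pmatrix},
\]
and seek $(t,s,u)\in\R^3$ so that this equals $\pm G$. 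Comparing the $(1,1)$-entry forces $e^{t/2}=|a|$ (so $t=2\ln|a|$, which needs $a\neq 0$); then the sign is determined by $\operatorname{sgn}(a)$, and from the first column one reads off $u=c/a$, while from the second column one reads off $s=ab$ (using $e^{-t/2}=1/|a|$ and carrying the same sign through).

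The key subtlety is the sign bookkeeping. When $a>0$ one has $C_uB_sA_t=G$ directly; when $a<0$, each entry picks up a global factor of $-1$, but the formulas for $u$ and $s$ are invariant under the simultaneous change of sign because the sign cancels between $c/a$ and the factor in front of $u$, and similarly for $s$. A quick consistency check is to verify the $(2,2)$-entry:
\[
(us+1)e^{-t/2}=\frac{bc+1}{|a|}\cdot\operatorname{sgn}(a)^{-1}=\frac{ad}{|a|}\cdot\operatorname{sgn}(a)^{-1}=d,
\]
using $ad-bc=1$. For uniqueness, any such decomposition reproduces the same matrix entries, so the formulas in \eqref{ust-def} are the only solutions.

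Part (b) is handled by the exact same recipe after computing
\[
A_tB_sC_u
=\begin{pmatrix}(1+su)e^{t/2} & se^{t/2}\\ ue^{-t/2} & e^{-t/2}\end{pmatrix}.
\]
Now the $(2,2)$-entry pins down $t=-2\ln|d|$ (requiring $d\neq 0$), the sign is fixed by $\operatorname{sgn}(d)$, and the first and second columns yield $u=c/d$ and $s=bd$, with the $(1,1)$-entry giving the consistency check $(1+su)e^{t/2}=(1+bc)/d=ad/d=a$ (again via $ad-bc=1$). The statement's value $s=d$ should read $s=bd$; I will note this while carrying out the argument.

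The only obstacle is keeping the $\pm$-signs straight across the case split $a\gtrless 0$ (respectively $d\gtrless 0$); once one observes that the global sign of the product matrix coincides with $\operatorname{sgn}(a)$ (resp.\ $\operatorname{sgn}(d)$) and hence drops out of the ratios and products defining $u$ and $s$, both parts are pure linear algebra with no further conceptual content.
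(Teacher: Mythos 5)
Your proof is correct and is the standard direct computation (multiply out $C_uB_sA_t$ and $A_tB_sC_u$, match entries against $\pm G$, and use $ad-bc=1$ for the consistency check), which is exactly what the paper defers to via the citation of \cite[Lemma~2.3]{HK}. You are also right that the printed $s=d$ in part (b) is a typo for $s=bd$; this is confirmed by the paper's own later use of the same decomposition in the proof of Lemma~\ref{prodstr}, where $s=bd$ appears.
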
 See \cite[Lemma 2.3]{HK} for a proof of (a). A similar argument can be applied for (b). 
\begin{lemma}\label{konvexa}
	For every $\eps>0$ there is $\delta=\delta(\eps)>0$ with the following property. 
	If $g\in\PSL(2,\R)$ is such that $d_{\G}(g,e)<\delta$, then there are
	\[ G=\Bigg(\begin{array}{cc}g_{11}&g_{12}\\
		g_{21}&g_{22}
	\end{array}\Bigg)\in\SL(2,\R) \]
	satisfying $g=\pi(G)$ and
	$|g_{11}-1|+|g_{12}|+|g_{21}|+|g_{22}-1|<\eps$.
\end{lemma}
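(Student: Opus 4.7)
The plan is to exploit the fact that the natural projection $\pi:\SL(2,\R)\to \PSL(2,\R)=\G$ is a two-sheeted covering that is a local diffeomorphism. By the construction of the Riemannian metric on $\G$ referenced in Lemma~\ref{at}, this metric is obtained by descending a left-invariant Riemannian metric on $\SL(2,\R)$ through $\pi$, so that $\pi$ becomes a local isometry. Denote the resulting distance function on $\SL(2,\R)$ by $d_*$.

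First I would fix an ``injectivity radius'' for $\pi$ at $E_2$. Since the fiber is $\pi^{-1}(e)=\{E_2,-E_2\}$, the number $d_0:=d_*(E_2,-E_2)$ is strictly positive, and for any $\delta_0\in(0,d_0/2)$ the map $\pi$ restricts to an isometric diffeomorphism from the open ball $B_{\delta_0}(E_2)\subset \SL(2,\R)$ onto the open $d_\G$-ball of radius $\delta_0$ around $e$. In particular, if $d_\G(g,e)<\delta_0$ then there is a unique representative $G\in B_{\delta_0}(E_2)$ of $g$, and it satisfies $d_*(G,E_2)=d_\G(g,e)<\delta_0$.

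Next, consider the continuous map
\[\Phi:\SL(2,\R)\to\R,\qquad \Phi(G)=|g_{11}-1|+|g_{12}|+|g_{21}|+|g_{22}-1|,\]
for which $\Phi(E_2)=0$. Continuity at $E_2$ furnishes $\delta_1>0$ with $d_*(G,E_2)<\delta_1\;\Longrightarrow\;\Phi(G)<\eps$. Setting $\delta:=\min\{\delta_0,\delta_1\}$ then closes the argument: for any $g$ with $d_\G(g,e)<\delta$, the representative $G$ constructed in the previous step satisfies $d_*(G,E_2)<\delta\leq\delta_1$, hence $\Phi(G)<\eps$. No genuine mathematical obstacle arises; the only step requiring slight care is the identification of $d_\G$ with the metric descended from a left-invariant Riemannian metric on $\SL(2,\R)$, which ensures that $d_\G$-smallness translates to entry-wise smallness of a well-chosen preimage. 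Alternatively one can bypass the covering viewpoint and argue via the exponential map: for $g$ close to $e$, write $g=\pi(\exp X)$ with $X\in\mathfrak{sl}(2,\R)$ small and invoke the Taylor expansion $\exp X=E_2+X+O(\|X\|^2)$ to bound the matrix entries of the representative directly.
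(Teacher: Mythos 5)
Your argument is mathematically sound, but note that the paper itself does not prove Lemma~\ref{konvexa}; it defers to \cite[Lemma 2.17]{HK}, so there is no in-paper proof to compare against. Taking your proposal on its own terms: the covering-space route is legitimate. The key facts you rely on all hold: a left-invariant Riemannian metric on $\SL(2,\R)$ is complete (left translations make the space homogeneous), $\pi\colon\SL(2,\R)\to\PSL(2,\R)$ is a Riemannian covering when $\PSL(2,\R)$ carries the descended metric, and since $-E_2$ is central one indeed has $d_*(G,-G)=d_*(E_2,-E_2)=d_0>0$ for every $G$, which is what makes $\pi$ injective on the ball $B_{\delta_0}(E_2)$ with $\delta_0<d_0/2$. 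The continuity step is unobjectionable because the topology induced by $d_*$ is the manifold (hence entrywise) topology on $\SL(2,\R)$. One small point worth making explicit in a polished write-up: the claim that the metric $d_\G$ of Lemma~\ref{at} is the descent of a left-invariant metric on $\SL(2,\R)$ is not stated in the paper, only that $d_\G$ is left-invariant on $\G$; this is the ``slight care'' you flag, and it should be justified (e.g., by recalling that the two groups share a Lie algebra, so the left-invariant structures correspond under $\pi$, which is then automatically a local isometry). Your alternative sketch via the exponential map is arguably closer in spirit to the concrete, hands-on style the present paper favors, and is probably also closer to what \cite{HK} actually does.
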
	
See \cite[Lemma 2.17]{HK} for a proof.
\begin{lemma}\label{eps0}
	For every $\eps>0$, there exists  $\rho=\rho(\eps)>0$ with the following property. 
	For $x=\Gamma g$, $z=\Gamma g c_ub_s$ with $|s|,|u|<\sigma_*/8$ and $L>0$. Then
	\begin{enumerate}
		\item[(a)] if  $d_X(\varphi_t(x),\varphi_t(z))<3\rho$ for $t\in [-L,0]$, then $|s|<\eps e^{-T}$;
		\item[(b)]  if $d_X(\varphi_t(x),\varphi_t(z))<3\rho$ for $t\in [0,L]$, then $|u|<\eps e^{-T}$.
	\end{enumerate}	
\end{lemma}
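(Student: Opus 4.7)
The plan is to translate the hypothesis on $d_X$ into a statement about the size of the matrix $a_{-t}c_ub_sa_t$ acting on a representative of the base point, and then extract $|s|$ or $|u|$ from explicit entries by evaluating at the endpoint $t=\pm L$. (I will write $L$ throughout; the exponent $T$ in the statement appears to be a misprint for $L$.)

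First I would compute, with $g\in\G$ a lift of $x$, that $\varphi_t(x)=\Gamma(ga_t)$ and $\varphi_t(z)=\Gamma(ga_t)\cdot w_t$ where $w_t:=a_{-t}c_ub_sa_t$ has the explicit $\SL(2,\R)$-representative
\[
W_t=\begin{pmatrix}1 & se^{-t}\\[2pt] ue^{t} & us+1\end{pmatrix}.
\]
Left-invariance of $d_\G$ then gives the upper bound $d_X(\varphi_t(x),\varphi_t(z))\le d_\G(e,w_t)$, so the remaining issue is to promote the hypothesis $d_X(\varphi_t(x),\varphi_t(z))<3\rho$ into the \emph{reverse} estimate $d_\G(e,w_t)<3\rho$.

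Second, I would fix $\rho=\rho(\eps)$ by requiring $3\rho<\sigma_*/2$ and $3\rho<\delta(\eps)$, with $\delta$ provided by Lemma \ref{konvexa}. Any non-trivial $\gamma\in\Gamma\setminus\{e\}$ realising (or approximating) the infimum in $d_X(\varphi_t(x),\varphi_t(z))$ would force, by the triangle inequality and Lemma \ref{sigma_0},
\[
\sigma_*<d_\G(ga_t,\gamma ga_t)\le 3\rho+d_\G(e,w_t),
\]
hence $d_\G(e,w_t)>\sigma_*/2$. At the endpoint $t=0$, on the other hand, one has $d_\G(e,w_0)\le|u|+|s|<\sigma_*/4$ by Lemma \ref{at} and the standing assumption $|s|,|u|<\sigma_*/8$. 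Since $t\mapsto d_\G(e,w_t)$ is continuous and the hypothesis on $d_X$ forbids its values from lying in the gap $[3\rho,\sigma_*/2]$, a connectedness argument on $[-L,0]$ (respectively $[0,L]$) forces $d_\G(e,w_t)<3\rho$ throughout.

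Third, once $d_\G(e,w_t)<3\rho<\delta(\eps)$, Lemma \ref{konvexa} supplies a lift of $w_t$ whose entries differ from those of $E_2$ by at most $\eps$ in the stated sum-norm. Because $W_t$ above already has diagonal $(1,1)$-entry equal to $1$, it is this lift (and not its negative, whose $(1,1)$-entry equals $-1$) that is $\eps$-close to $E_2$, provided $\eps<1$. Reading off the entries, $|se^{-t}|+|ue^{t}|+|us|<\eps$, and specialising to $t=-L$ in (a) gives $|s|<\eps e^{-L}$, while $t=L$ in (b) gives $|u|<\eps e^{-L}$.

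The principal obstacle is the continuity/connectedness step in paragraph two: the hypothesis is purely metric, yet we need to know that the infimum in $d_X$ is realised by the specific coset representative $ga_t$ (i.e., $\gamma=e$) \emph{along the entire interval}, not by a translate. This is precisely where the injectivity-radius lower bound $\sigma_*$ does its work, opening a gap in the values of $d_\G(e,w_t)$ that a continuous function cannot cross, so the ``good'' case at $t=0$ propagates to all of $[-L,0]$ or $[0,L]$.
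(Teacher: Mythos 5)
Your argument is correct, and it uses exactly the tools the paper itself relies on for analogous estimates (the injectivity-radius bound of Lemma \ref{sigma_0} and the lift control of Lemma \ref{konvexa}), so the route is in the spirit of the intended proof; note, though, that the paper does not give an inline proof of Lemma \ref{eps0} but defers to Theorem~2.1 of \cite{partII}, so a literal comparison is not possible. Two remarks for completeness: you are right that $T$ in the statement is a misprint for $L$; and the one place where a reader might want a word of caution is the connectedness step, where it should be made explicit that the dichotomy (value of $d_\G(e,w_t)$ either below $3\rho$ or above $\sigma_*/2$) holds \emph{for every} $t$ in the interval because the hypothesis on $d_X$ holds for every such $t$, and that $t=0$ belongs to both intervals $[-L,0]$ and $[0,L]$, so the anchor $d_\G(e,w_0)\le|u|+|s|<\sigma_*/4$ is available in both cases. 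With those observations, the propagation of $d_\G(e,w_t)<3\rho$ by the intermediate value theorem, the extraction of the lift $W_t=\bigl(\begin{smallmatrix}1&se^{-t}\\ ue^{t}&us+1\end{smallmatrix}\bigr)$ via Lemma \ref{konvexa} (using the $(1,1)$-entry $1$ to exclude the $-W_t$ lift for $\eps<1$), and the specialisation to $t=\mp L$ are all sound.
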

See \cite[Theorem 2.1]{partII} for a proof.

%%%%%%%%%%%%%%%%%%%%%%%%%%%%%%%%%%%%%%%%%%%%%%%%%%%%%%%

\section{Local product structure} 
\label{lpss}

In this section we construct local product structure for the system.
We use explicit forms of (local) stable and unstable manifolds 
to calculate local product structure in detail.

%\begin{definition}%[Weak-stable and weak-unstable manifold]\label{westunma} \rm 
%	Let $x\in X$. 
%	The {\rm weak-stable manifold} and {\rm weak-unstable manifold} at $x$ are given by 
%	\[ W^{ws}(x)=\{(\theta_s\circ\varphi_t)(x): s, t\in\R\}
%	=\{\Gamma ga_t b_s: s, t\in\R\} \] 
%	and 
%	\[ W^{wu}(x)=\{(\eta_u\circ\varphi_t)(x): u, t\in\R\}
%	=\{\Gamma ga_t c_u: u, t\in\R\}, \] 
%	respectively, where $g\in \G$ is such that $x=\Gamma g$. 
%\end{definition}
\begin{definition}\rm 
	Let $x\in X$ and $\eps>0$. The local stable and local unstable manifold at $x$ are given by 
	\[W_\eps^s(x)=\{ \theta_s(x): |s|<\eps\}=\{ \Gamma g b_s: |s|<\eps \}\]
	and 
	\[W_\eps^u(x)=\{\eta_u(x):|u|<\eps \}=\{ \Gamma gc_u: |u|<\eps\}. \]
\end{definition}
Note that both sets are independent of the choice of $g\in \PSL(2,\R)$ such that $x=\Gamma g$. 
We also need the notion of local weak-stable and local weak-unstable manifold:
\[W^{ws}_\eps(x)=\{ \Gamma ga_tb_s: |t|<\eps,|s|<\eps \}, \]
\[W^{wu}_\eps(x)=\{\Gamma g a_tc_u: |t|<\eps, |u|<\eps \}. \]

\begin{lemma}\label{prodstr} 
	Let $\eps\in (0, \sigma_*/5)$. There exists a $\delta=\delta(\eps)>0$ with the following property. 
	If $x, y\in X$ satisfy $d_X(x, y)<\delta$, then the intersection
	\[ W^{ws}_{\eps}(x)\cap W^u_{\eps}(y) \] 
	consists of a unique point, and furthermore the intersection
	\[ W^{wu}_{\eps}(x)\cap W^s_{\eps}(y) \] 
	consists of a unique point. 
\end{lemma}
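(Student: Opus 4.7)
The plan is to lift the problem to $\G=\PSL(2,\R)$, solve an explicit matrix equation for a candidate intersection point, and then use the injectivity radius from Lemma \ref{sigma_0} to close the uniqueness argument. Writing $x=\Gamma g_1$ and $y=\Gamma g_2$, I would use compactness of $X$ to choose $\gamma_0\in\Gamma$ realizing $d_\G(g_1,\gamma_0 g_2)=d_X(x,y)<\delta$, and set $h:=g_2^{-1}\gamma_0^{-1}g_1\in\G$; left-invariance of $d_\G$ then yields $d_\G(h,e)<\delta$. A point $z=\Gamma g_1 a_t b_s=\Gamma g_2 c_u$ lies in $W^{ws}_\eps(x)\cap W^u_\eps(y)$ precisely when $g_1 a_t b_s=\gamma g_2 c_u$ for some $\gamma\in\Gamma$, and selecting $\gamma=\gamma_0$ rearranges this to the matrix identity $h=c_u b_{-s}a_{-t}$ in $\G$.

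For existence, I would apply Lemma \ref{konvexa} to choose a representative $H\in\SL(2,\R)$ of $h$ whose entries are as close to those of $E_2$ as desired, after taking $\delta=\delta(\eps)$ small enough; in particular $H_{11}>0$. Lemma \ref{decompo}(a) then supplies a unique decomposition $h=c_{u_0}b_{s_1}a_{t_1}$ with parameters given as continuous functions of the entries of $H$ and vanishing at $H=E_2$. Flipping the signs of $s_1,t_1$ produces a triple $(t_0,s_0,u_0)$ that solves the target identity, all of absolute value $<\eps$ once $\delta$ is small enough; the corresponding point $z:=\Gamma g_1 a_{t_0}b_{s_0}=\Gamma g_2 c_{u_0}$ is then the desired intersection.

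For uniqueness, suppose $z'=\Gamma g_1 a_{t'}b_{s'}=\Gamma g_2 c_{u'}$ is any intersection point, so $g_1 a_{t'}b_{s'}=\gamma' g_2 c_{u'}$ for some $\gamma'\in\Gamma$. Using Lemma \ref{at}, left-invariance, and the triangle inequality, I would bound $d_\G(\gamma' g_2,g_1)$ by a constant multiple of $\eps$ (specifically by $|u'|+|s'|+|t'|/\sqrt 2<(2+1/\sqrt{2})\eps$), so that $d_\G(\gamma_0^{-1}\gamma' g_2,g_2)$ is at most $O(\eps)+\delta$. Taking $\eps<\sigma_*/5$ and $\delta$ smaller than a suitable multiple of $\sigma_*$ makes this strictly less than $\sigma_*$; Lemma \ref{sigma_0} then forces $\gamma'=\gamma_0$, and the uniqueness of the Iwasawa-type decomposition coming from Lemma \ref{decompo}(a) forces $(t',s',u')=(t_0,s_0,u_0)$, hence $z'=z$.

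The companion intersection $W^{wu}_\eps(x)\cap W^s_\eps(y)$ is handled by the same template, with Lemma \ref{decompo}(b) replacing (a)---equivalently, one solves the rearranged matrix identity $h=b_s c_{-u}a_{-t}$ after decomposing $h$ in the form $a_t b_s c_u$. The main nuisance I anticipate is bookkeeping of signs and of the order of the one-parameter subgroups when translating between the identities $g_1 a_t b_s=\gamma_0 g_2 c_u$ and the three matrix decompositions, together with calibrating the numerical constants so that the hypothesis $\eps<\sigma_*/5$ really is tight enough for the injectivity-radius step to eliminate any competing $\gamma'\neq\gamma_0$.
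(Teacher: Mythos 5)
Your proposal is correct and follows essentially the same route as the paper's proof: lift to $\G$, apply Lemma~\ref{konvexa} to get a near-identity representative, use the $c\,b\,a$/$a\,b\,c$ decomposition from Lemma~\ref{decompo} to construct the intersection point with parameter bounds proportional to $\delta$, and invoke the injectivity radius from Lemma~\ref{sigma_0} to force $\gamma'=\gamma_0$ and hence uniqueness. The only cosmetic differences are that the paper decomposes $g^{-1}h$ via Lemma~\ref{decompo}(b) (obtaining $a_tb_sc_u$ directly) rather than decomposing its inverse via part (a) and flipping signs, and the paper's uniqueness step compares the two candidate intersection points to each other rather than comparing the candidate to $\gamma_0 g_2$.
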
 
\begin{proof} We prove the first assertion only. 
	In order to show that such a $z\in W^{ws}_{\eps}(x)\cap W^u_{\eps}(y)$
	does exist, note that according to Lemma \ref{konvexa} there is $\delta=\delta(\eps)>0$ so that the following holds. 
	If $u\in \G$ and $d_\G(u, e)<\delta$, then there is 
	\begin{equation}\label{parto} 
		A=\bigg(\begin{array}{cc} a & b
			\\ c & d\end{array}\bigg)\in {\rm SL}(2, \R)
	\end{equation} 
	such that $u=\pi(A)$ and $|a-1|+|b|+|c|+|d-1|<\min\{\frac{1}{2}, \frac{\eps}{4}\}$. 
	Fix $x, y\in X$ with $d_X(x, y)<\delta$. Let $g\in \G$ and $h\in \G$ satisfy $x=\Gamma g$ 
	and $y=\Gamma h$ as well as $d_X(x, y)=d_\G(g, h)$. Then 
	\[ d_\G(g^{-1}h, e)=d_\G(g, h)=d_X(x, y)<\delta, \] 
	and hence there is $A\in {\rm SL}(2, \R)$ as in (\ref{parto}) 
	such that $g^{-1}h=[A]$ and $|a-1|+|b|+|c|+|d-1|<\min\{\frac{1}{2}, \frac{\eps}{4}\}$; 
	then in particular $d\in [1/2, 3/2]$ holds. We can write $g^{-1}h=a_t b_s c_u$ for 
	\[ t=-2\ln d,\quad s=bd,\quad u=\frac{c}{d}. \]
	Then $hc_{-u}=g a_t b_s$ and also $|t|=2|\ln d|\le 4|d-1|<\eps$ due to $|\ln(1+z)|\le 2|z|$ 
	for $|z|\le 1/2$. Furthermore, $|s|=|b||d|\le 2|b|<\eps/2$ 
	and $|u|=\frac{|c|}{|d|}\le 2|c|<\eps/2$. Therefore if we put 
	$z=\Gamma g a_t b_s=\Gamma hc_{-u}\in X$, 
	then $z\in W^{ws}_{\eps}(x)\cap W^u_{\eps}(y)$. 
	It remains to prove that the intersection point is unique. 
	To establish this assertion, suppose that also 
	$z'\in W^{ws}_{\eps}(x)\cap W^u_{\eps}(y)$. 
	Then $z'=\Gamma ga_{t'}b_{s'}=\Gamma hc_{-u'}$ 
	for some $|t'|, |s'|, |u'|<\eps$. Hence $\Gamma hc_u a_t b_s
	=\Gamma g=\Gamma hc_{u'}a_{t'}b_{s'}$, which means that 
	$hc_u a_t b_s=\gamma hc_{u'}a_{t'}b_{s'}$ for an appropriate element 
	$\gamma\in\Gamma$. This yields
	\begin{eqnarray*}
		d_\G(\gamma hc_{u'}a_{t'}b_{s'}, hc_{u'}a_{t'}b_{s'})
		& = & d_\G(hc_u a_t b_s, hc_{u'}a_{t'}b_{s'})
		=d_\G(c_u a_t b_s, c_{u'}a_{t'}b_{s'})
		\\ & \le & d_\G(c_u a_t b_s, e)+d_\G(c_{u'}a_{t'}b_{s'}, e)
		\\ & \le & |u|+\frac{1}{\sqrt{2}}|t|+|s|+|u'|+\frac{1}{\sqrt{2}}|t'|+|s'|
		<5\eps<\sigma_*.
	\end{eqnarray*} 
	From the property of $\sigma_*$ we deduce that $\gamma=e$ 
	and therefore $c_u a_t b_s=c_{u'}a_{t'}b_{s'}$. 
	Multiplying out the matrices  
	we obtain $u=u'$, $t=t'$, and $s=s'$, and accordingly 
	$z=\Gamma gb_{-s}=\Gamma gb_{-s'}=z'$. \end{proof}

\begin{corollary}[Local product structure]\label{lpslm}
	Let  $\eps\in (0, \sigma_*/5)$. There exists a positive number $\delta=\delta(\eps)$ with the following property. 
	If $x,y\in X$ and $d_X(x,y)\leq \delta$, then 
	there is a unique $v=v(x,y)\in \R$, $|v|\leq \eps$ such that 
	\[W^s_\eps(\varphi_v(x))\cap W^u_\eps(y) \ne \varnothing.\]
	More precisely, the intersection is a single point, denoted by $\langle x,y\rangle$. 
	Furthermore, the map $\langle \cdot, \cdot\rangle$ is continuous on 
	$\{ (x,y)\in X\times X: d_X(x,y)<\delta\}. $
\end{corollary}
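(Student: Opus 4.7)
The plan is to deduce the Corollary directly from Lemma \ref{prodstr}, using the decomposition
\[ W^{ws}_\eps(x) = \bigcup_{|v|<\eps} W^s_\eps(\varphi_v(x)), \]
which follows from the identities $\varphi_v(x)=\Gamma g a_v$ and $W^s_\eps(\varphi_v(x))=\{\Gamma g a_v b_s : |s|<\eps\}$. Taking the $\delta=\delta(\eps)>0$ supplied by Lemma \ref{prodstr}, for any $x,y$ with $d_X(x,y)\le\delta$ there is a unique point $z\in W^{ws}_\eps(x)\cap W^u_\eps(y)$; writing $z=\Gamma g a_v b_s$ with $|v|,|s|<\eps$ then exhibits a $v=v(x,y)$ and a point in $W^s_\eps(\varphi_v(x))\cap W^u_\eps(y)$, proving existence.

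For uniqueness of $v$, suppose that some $v'\in[-\eps,\eps]$ also satisfies $W^s_\eps(\varphi_{v'}(x))\cap W^u_\eps(y)\neq\varnothing$, and let $z'$ be a point in that intersection. Since $W^s_\eps(\varphi_{v'}(x))\subset W^{ws}_\eps(x)$, the uniqueness clause of Lemma \ref{prodstr} forces $z'=z$, hence $g a_v b_s=\gamma g a_{v'} b_{s'}$ in $\G$ for some $\gamma\in\Gamma$ and some $|s'|<\eps$. Using the left-invariance of $d_\G$ together with Lemma \ref{at},
\[ d_\G(\gamma g a_{v'} b_{s'},\, g a_{v'} b_{s'}) = d_\G(a_v b_s,\, a_{v'} b_{s'}) \le \tfrac{1}{\sqrt{2}}(|v|+|v'|)+|s|+|s'| < 4\eps < \sigma_*, \]
so Lemma \ref{sigma_0} yields $\gamma=e$. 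Matching entries of $a_v b_s=a_{v'}b_{s'}\in\SL(2,\R)$ (the $(2,2)$-entries are $e^{-v/2}$ and $e^{-v'/2}$, both positive) then gives $v=v'$ and $s=s'$, and the singleton claim for the intersection follows from the same reasoning once $v$ is fixed.

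For continuity of $\langle\cdot,\cdot\rangle$ on $\{(x,y): d_X(x,y)<\delta\}$, I would unwind the explicit construction from the proof of Lemma \ref{prodstr}: locally, representatives $g,h\in\G$ of $x,y$ realizing $d_X(x,y)=d_\G(g,h)$ can be chosen continuously, and applying Lemma \ref{decompo}(b) to $g^{-1}h=a_v b_s c_u$ expresses $v,s,u$ as smooth functions of the entries of $g^{-1}h$. Then $\langle x,y\rangle=\Gamma g a_v b_s$ depends continuously on $(x,y)$. The only mild obstacle is ensuring that the $(2,2)$-entry of $g^{-1}h$ is nonzero so that Lemma \ref{decompo}(b) applies; this is guaranteed by Lemma \ref{konvexa}, provided $\delta$ is taken small enough that $d_\G(g^{-1}h,e)=d_X(x,y)<\delta$ keeps $g^{-1}h$ near the identity.
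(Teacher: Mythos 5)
Your proposal is correct and follows essentially the same route as the paper: existence via the decomposition of $W^{ws}_\eps(x)$ into stable slices and Lemma \ref{prodstr}, uniqueness via the injectivity radius argument, and continuity via the smooth dependence of the $(a_v,b_s,c_u)$-coordinates from Lemma \ref{decompo}. You simply fill in a few details that the paper leaves implicit (re-deriving $v=v'$, $s=s'$ by the $\gamma=e$ matrix computation rather than citing the analogous step inside the proof of Lemma \ref{prodstr}, and sketching the continuity that the paper calls obvious).
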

\begin{proof} Let $\eps\in (0,\sigma_*/5)$ and let $\delta=\delta(\eps)$ be as in Lemma \ref{prodstr}.
	Let $x, y\in X$ be such that $d_X(x, y)<\delta$. 
	Then $W^{ws}_{\eps}(x)\cap W^u_{\eps}(y)\neq\varnothing$, 
	i.e., if $x=\Gamma g$ and $y=\Gamma h$, then there are $s,v,u\in (-\eps,\eps)$ 
	such that $\Gamma ga_v b_s=\Gamma h c_u $. Since $\varphi_v(x)=\Gamma g a_v$ 
	we have 
	\[ W^s_{\eps}(\varphi_v(x))=\{\Gamma ga_v b_{s'}: |s'|<\eps\}, \] 
	and hence $\Gamma ga_t b_s=\Gamma h c_u\in W^s_{\eps}(\varphi_v(x))
	\cap W^u_{\eps}(y)$. If also $W^s_{\eps}(\varphi_{v'}(x))\cap W^u_{\eps}(y)
	\neq\varnothing$ for some $|v'|<\eps$, then $\Gamma ga_{v'} b_{s'}=\Gamma h c_{u'}$ 
	for appropriate $|s'|, |u'|<\eps$. From Lemma \ref{prodstr} we obtain 
	$v=v'$, $s=s'$, and $u=u'$, so that $v$ is unique. That the intersection 
	is a single point also follows from Lemma \ref{prodstr}; see Figure \ref{lps} for an illustration.
	The last assertion is obvious. 
\end{proof}\bigskip
\begin{figure}[ht]
	\begin{center}
		\begin{minipage}{0.6\linewidth}
			\centering
			\includegraphics[angle=0,width=0.7\linewidth]{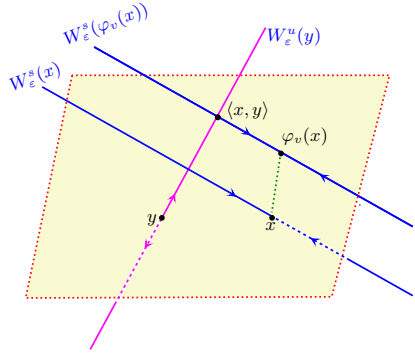}
		\end{minipage}
	\end{center}
	\caption{Local product structure}\label{lps}
\end{figure}

Fix $\eps\in (0,\sigma_*/5)$ and let $\delta_1=\delta(\eps)$ from
Corollary \ref{lpslm} above. Define $\delta_2=\min \{\delta(\frac{\delta_1}3),\frac{\delta_1}3\}$.
We also have a similar result to \cite[Lemma 6]{bo-diff}.
\begin{lemma}\label{xyz}
	Let $x,y,z,w\in X$ be such that $\diam \{x,y,z,w \}<\delta_2$. Then
	
	(a) $\langle\langle x, y\rangle,z\rangle=\langle x,\langle y,z\rangle\rangle=\langle x, z\rangle$;
	
	(b) $\langle\langle x,y\rangle,\langle z, w\rangle\rangle=\langle x,w\rangle$.
\end{lemma}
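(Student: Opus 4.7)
The plan is to verify each identity by exhibiting the candidate right-hand side as a point lying in the defining intersection of the left-hand side, and then invoke the uniqueness half of Corollary~\ref{lpslm}. Crucially, that uniqueness (traced back to the proof of Lemma~\ref{prodstr}) is a consequence only of $\eps<\sigma_*/5$ and does not require the closeness hypothesis $d_X(\cdot,\cdot)\le\delta_1$. The condition $\diam\{x,y,z,w\}<\delta_2\le\delta(\delta_1/3)$ will be used only to ensure that each pairwise bracket of elements of $\{x,y,z,w\}$ is itself produced by Corollary~\ref{lpslm}, with weak-stable and unstable displacement parameters of absolute value below $\delta_1/3$; since $\delta_1\le\eps<1/5$, all exponential factors arising from group commutations will stay below $e^{2/15}<2$.

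Fixing representatives $x=\Gamma g$, $z=\Gamma h$, $w=\Gamma\tilde h$ and a lift of $y$ realizing the pairwise minimal distances, I would express every bracket sharing the base-point $x$ in its weak-stable form $\Gamma g\,a_{v_*}b_{s_*}$, and every bracket sharing an unstable base-point $z$ or $w$ in its unstable form $\Gamma h\,c_{u_*}$ or $\Gamma\tilde h\,c_{u_*}$, with all parameters of absolute value below $\delta_1/3$.

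For part~(a), let $p=\langle x,y\rangle$ and $q=\langle x,z\rangle$. The commutation identity $b_s a_v=a_v b_{se^{-v}}$ yields
\[
p^{-1}q=a_{v_{xz}-v_{xy}}\,b_{s_{xz}-s_{xy}e^{v_{xy}-v_{xz}}},
\]
and the $s$-exponent has absolute value bounded by $(\delta_1/3)(1+e^{2\delta_1/3})<\eps$. Thus $q\in W^s_\eps(\varphi_{v^*}(p))$ with $v^*=v_{xz}-v_{xy}$; combined with $q\in W^u_\eps(z)$ from the construction of $\langle x,z\rangle$, uniqueness forces $q=\langle p,z\rangle$, the first equality. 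For the second, since $q$ and $\langle y,z\rangle$ both lie in $W^u_{\delta_1/3}(z)$, their $c$-parameters at base-point $z$ differ by less than $2\delta_1/3<\eps$, so $q\in W^u_\eps(\langle y,z\rangle)$; combined with $q\in W^s_\eps(\varphi_{v_{xz}}(x))$ by construction, uniqueness delivers $q=\langle x,\langle y,z\rangle\rangle$.

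Part~(b) is then obtained by running both computations with $q=\langle x,w\rangle$ (using $w$ in place of $z$ in the first, and $\langle z,w\rangle$ in place of $\langle y,z\rangle$ in the second), placing $q$ simultaneously in $W^s_\eps(\varphi_v(\langle x,y\rangle))$ and $W^u_\eps(\langle z,w\rangle)$ for an appropriate $|v|\le\eps$. The main technical obstacle is controlling the $s$-exponent in the commutation, where the factor $e^{v_{xy}-v_{xz}}$ could in principle amplify the stable displacement; the bound $\delta_1<\sigma_*/5<1$ is precisely what defuses this. A secondary subtlety in~(b) is that $d_X(\langle x,y\rangle,\langle z,w\rangle)$ may exceed $\delta_1$, so Corollary~\ref{lpslm} does not produce $\langle\langle x,y\rangle,\langle z,w\rangle\rangle$ directly; this is resolved by the observation that exhibiting $q$ in the relevant intersection gives existence, while the $\eps<\sigma_*/5$ uniqueness argument of Lemma~\ref{prodstr} continues to apply unchanged.
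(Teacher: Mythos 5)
Your proof is correct and reaches the same conclusion by a route that is ``dual'' to the paper's. The paper places the \emph{left-hand side} $\langle\langle x,y\rangle,z\rangle$ inside the intersection $W^{ws}_{\delta_1}(x)\cap W^u_{\delta_1}(z)$ by a short telescoping inclusion argument (if $x'\in W^{ws}_{\delta_1/3}(x)$ then $W^{ws}_{\delta_1/3}(x')\subset W^{ws}_{\delta_1}(x)$, etc.), then identifies it with $\langle x,z\rangle$ by uniqueness, and finally obtains (b) by two applications of (a). You instead place the \emph{right-hand side} $\langle x,z\rangle$ inside the defining intersection $W^s_\eps(\varphi_{v^*}(\langle x,y\rangle))\cap W^u_\eps(z)$ of the left-hand side, by computing $p^{-1}q$ explicitly via the commutation $b_s a_v=a_v b_{se^{-v}}$ and bounding the resulting displacement, and you prove (b) directly by the same device rather than reducing it to (a). What your computation buys is that it estimates the displacement parameters of the composed bracket directly, so you never need to place $\langle\langle x,y\rangle,z\rangle$ in $W^{ws}_{\delta_1/3}(\langle x,y\rangle)\cap W^u_{\delta_1/3}(z)$---a step in the paper that would strictly require $d_X(\langle x,y\rangle,z)<\delta(\delta_1/3)=\delta_2$, whereas only $d_X(\langle x,y\rangle,z)<\delta_1$ is actually verified. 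Your observation that the uniqueness half of Lemma~\ref{prodstr} needs only $\eps<\sigma_*/5$ and not the $\delta$-closeness hypothesis is the right thing to isolate and is also what the paper implicitly relies on. One small unstated hypothesis in both treatments is $\delta_1\le\eps$ (you use it for the exponential bounds, the paper uses it for uniqueness at radius $\delta_1$); this is harmless since $\delta(\eps)$ can always be shrunk, but it is worth saying out loud.
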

\begin{proof}  
	For $x,y,z,w\in X$, we first check that all the notations make sense if $\diam\{x,y,z,w\}<\delta_2$.
	Obviously $\langle a,b\rangle$ makes sense for all $a,b\in\{x,y,z,w\}$. 
	Write $y=\Gamma g$ and $w=\Gamma h$ for $g,h\in\PSL(2,\R)$.
	By the proof of Lemma \ref{prodstr}, 
	$\langle x,y\rangle=\Gamma gc_{u_1}$ for some $|u_1|<\delta_1/3$
	and $\langle z,w\rangle=\Gamma hc_{u_2}$ for some $|u_2|<\delta_1/3$.
	Then 
	\begin{align*}
		d_X(\langle x,y\rangle,z)
		&=
		d_X(\Gamma gc_{u_1}, z)\leq d_X(\Gamma g c_{u_1}, \Gamma g)+d_X(y,z)
		\\&
		\leq |u_1|+d_X(y,z)
		<\delta_1/3+\delta_1/3<\delta_1
	\end{align*}
	so
	$\langle\langle x, y\rangle,z\rangle$ makes sense.
	Similarly, $\langle x,\langle y,z\rangle\rangle$ also makes sense. 
	Next, 
	\begin{align*}
		d_X(\langle x,y\rangle,\langle z, w\rangle)
		&=d_X(\Gamma g c_{u_1},\Gamma h c_{u_2})
		\leq d_X(\Gamma g c_{u_1}, \Gamma g)+d_X(y,w)+d_X(\Gamma h, \Gamma h c_{u_2})
		\\
		&
		<|u_1|+\delta_1/3+|u_2|<\delta_1
	\end{align*} 
	and hence
	$\langle\langle x,y\rangle,\langle z, w\rangle\rangle$ also makes sense. 
	
	(a) Note that if $x'\in W^{ws}_{\frac{\delta_1}3}(x)$, then $W^{ws}_{\frac{\delta_1}3}(x')\subset W^{ws}_{\delta_1}(x)$
	and if $z'\in W^u_{\frac{\delta_1}3}(z)$, then $W^u_{\frac{\delta_1}3}(z')\subset W^u_{\frac{2\delta_1}{3}}(z)$.  
	By Lemma \ref{prodstr}, $\langle x,y\rangle \in W^{ws}_{\frac{\delta_1}3}(x)$ and 
	\[\langle\langle x, y\rangle,z\rangle \in W^{ws}_{\frac{\delta_1}3}(\langle x,y\rangle)\cap W^u_{\frac{\delta_1}3}(z)
	\in W^{ws}_{\delta_1}(x)\cap W^u_{\delta_1}(z)=\langle x,z\rangle.\]
	Similarly, 
	$\langle y,z\rangle \in W^{u}_\eps(z)$ implies 
	\[\langle x,\langle y,z\rangle\rangle\in W^{ws}_{\frac{\delta_1}3}(x)\cap W^u_{\frac{\delta_1}3}(\langle y,z\rangle)
	\in W^{ws}_{\frac{\delta_1}3}(x)\cap W^u_{\frac{2\delta_1}3}(z)=\langle x,z\rangle. \]
	(b) Applying (a), we have $\langle\langle x,y\rangle,\langle z, w\rangle\rangle
	=\langle \langle x,y\rangle, w\rangle 
	=\langle x,w\rangle$.
\end{proof}

\section{Local cross sections and rectangles}\label{lcssection}

This section deals with rectangles included in  
Poincar\'e sections. We introduce explicit forms of rectangles 
that leads to more explicit Markov partition afterwards. 

\subsection{Local cross sections} 
\begin{definition}[Local cross section]\rm A set
	$S\subset X$ is  called a {\em cross section of time} $\eps>0$ for the flow $(\varphi_t)_{t\in\R}$  if
	\begin{enumerate}
		\item[(a)] $S$ is closed;
		\item[(b)] $S\cap \varphi_{[-\eps,\eps]}(x)=\{x\}$ for all $x\in S$.
	\end{enumerate}
\end{definition}
See Figure \ref{lcs2}\,(a) for an illustration.

\begin{figure}[ht]
	\begin{center}
		\begin{minipage}{0.6\linewidth}
			\centering
			\includegraphics[angle=0,width=1\linewidth]{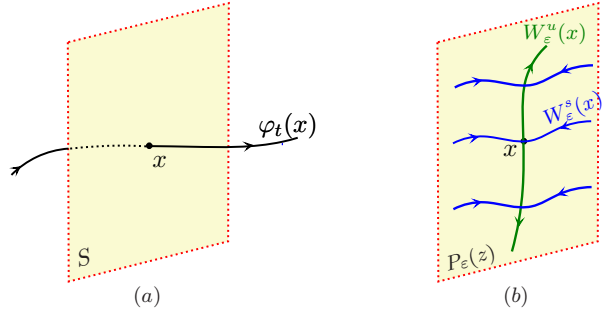}
		\end{minipage}
	\end{center}
	\caption{(a)  Local cross section,\  (b) Poincar\'e section\label{lcs2}}
\end{figure}

We consider an example of local cross sections.

\begin{lemma}\label{lcexpl} Let $\eps>0, \alpha>0$ be such that 
	$4\eps+2\alpha<\sigma_*$ and let $z=\Gamma g\in X$. The closed Poincar\'e sections 
	\begin{align*}
		P_{\eps}(z)&=\{\Gamma gc_ub_s,\ |u|\leq \eps, |s|\leq \eps\}
		\\
		\widetilde{P}_{\eps}(z)&=\{\Gamma gb_sc_u,\ |s|\leq \eps, |u|\leq \eps\}
	\end{align*}
	are  local cross sections of time $\alpha$ and with diameters at most $ 4\eps$. 
\end{lemma}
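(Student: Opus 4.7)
The plan is to verify the three requirements in turn: closedness, the disjointness condition (b) with window $\alpha$, and the diameter bound. Closedness is immediate, since $P_\eps(z)$ is the continuous image of the compact square $[-\eps,\eps]\times[-\eps,\eps]$ under the map $(u,s)\mapsto \Gamma g c_u b_s$. For the diameter estimate, given two points $y_i=\Gamma g c_{u_i}b_{s_i}\in P_\eps(z)$ with $|u_i|,|s_i|\le \eps$, left-invariance of $d_\G$ together with Lemma \ref{at} gives
\[
d_X(y_1,y_2)\le d_\G(c_{u_1}b_{s_1},c_{u_2}b_{s_2})\le |u_1|+|s_1|+|u_2|+|s_2|\le 4\eps,
\]
and the analogous estimate works for $\widetilde P_\eps(z)$.

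The main work lies in (b). Suppose $x\in P_\eps(z)$ and $\varphi_t(x)\in P_\eps(z)$ for some $|t|\le\alpha$; write $x=\Gamma g c_{u_1}b_{s_1}$ and $\varphi_t(x)=\Gamma g c_{u_2}b_{s_2}$ with $|u_i|,|s_i|\le\eps$. Since $\varphi_t(x)=\Gamma g c_{u_1}b_{s_1}a_t$, there exists $\gamma\in\Gamma$ with
\[
gc_{u_1}b_{s_1}a_t = \gamma g c_{u_2}b_{s_2}, \qquad \text{i.e.,}\qquad \gamma = g\,c_{u_1}b_{s_1}a_t b_{-s_2}c_{-u_2}\,g^{-1}.
\]
Using left-invariance of $d_\G$ together with Lemma \ref{at}, I would bound
\[
d_\G(\gamma g, g)=d_\G\bigl(c_{u_1}b_{s_1}a_tb_{-s_2}c_{-u_2},\,e\bigr)\le |u_1|+|s_1|+\tfrac{|t|}{\sqrt 2}+|s_2|+|u_2|\le 4\eps+\alpha<\sigma_*.
\]
By Lemma \ref{sigma_0} (the injectivity radius property) this forces $\gamma=e$, hence $c_{u_1}b_{s_1}a_t=c_{u_2}b_{s_2}$ in $\PSL(2,\R)$.

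To conclude $t=0$, I would lift to $\SL(2,\R)$: the two lifts differ at most by a sign. A direct multiplication shows that the $(1,1)$-entry of $C_{u_1}B_{s_1}A_t$ is $e^{t/2}>0$, while that of $C_{u_2}B_{s_2}$ is $1>0$; so the lifts actually coincide, giving $e^{t/2}=1$ and thus $t=0$. This establishes (b) for $P_\eps(z)$. For $\widetilde P_\eps(z)$, writing $x=\Gamma g b_{s_1}c_{u_1}$ and $\varphi_t(x)=\Gamma g b_{s_2}c_{u_2}$, the identical estimate gives $d_\G(\gamma g,g)<\sigma_*$ and hence $\gamma=e$; comparing $(1,1)$-entries of $B_{s_1}C_{u_1}A_t$ and $B_{s_2}C_{u_2}$ (the former equals $e^{t/2}(1+s_1 u_1)$ up to the $a_t$-action and both evaluate to strictly positive numbers close to $1$ since $|s_i u_i|<1$) again forces $t=0$. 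The only delicate point in the argument is ensuring that the sign ambiguity in $\PSL\to\SL$ is ruled out, which is why the positive-entry comparison appears; the arithmetic bookkeeping in the diameter and radius estimates is routine.
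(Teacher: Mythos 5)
Your proof is correct and follows essentially the same route as the paper's: bound $d_\G(\gamma g,g)<\sigma_*$ by the triangle inequality and Lemma \ref{at}, deduce $\gamma=e$ from the injectivity radius (Lemma \ref{sigma_0}), and then compare $\SL(2,\R)$-matrix entries after lifting; your explicit handling of the $\pm$-sign in $\PSL\to\SL$ is actually a bit more careful than the paper's terse ``by considering matrices.'' One small imprecision in the $\widetilde P_\eps(z)$ case: comparing $(1,1)$-entries yields $(1+s_1u_1)e^{t/2}=1+s_2u_2$, which resolves the sign but does not by itself force $t=0$; compare instead (say) the $(2,2)$-entries, which give $e^{-t/2}=1$ directly.
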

%\begin{lemma}\label{lcexpl} 
%	For $\eps\in(0,\sigma_*/6)$ and $z=\Gamma g\in X$,	the closed Poincar\'e sections 
%	\begin{align*}
%		P_{\eps}(z)&=\{\Gamma gc_ub_s,\ |u|\leq \eps, |s|\leq \eps\}
%		\\
%		\widetilde{P}_{\eps}(z)&=\{\Gamma gb_sc_u,\ |s|\leq \eps, |u|\leq \eps\}
%	\end{align*}
%	are  local cross sections of time $\eps$ and with radii not greater than $ 4\eps$. 
%\end{lemma}
\begin{proof}  Obviously, $P_\eps(z)$ is closed. 
	Note that
	\[{\mathcal Q}:=\varphi_{[-\eps,\eps]}(P_\eps(z))=\{\Gamma gc_ub_sa_t, |u|\leq \eps, |s|\leq\eps, |t|\leq \alpha\}.\]
	In order to verify Assumption (b), we check that every point $x\in {\mathcal Q}$ has a unique triple $(u,s,t)\in[-\eps,\eps]^2\times [-\alpha,\alpha]$ such that
	$x=\Gamma g c_ub_sa_t$.
	To show its uniqueness, suppose that $z=\Gamma g_1=\Gamma g_2$ and $x=\Gamma g_1b_{s_1}c_{u_1}a_{t_1}=\Gamma g_2 b_{s_2}c_{u_2}a_{t_2}$ for 
	$g_1,g_2\in \G$ and $(u_i,s_i,t_i)\in[-\eps,\eps]^2\times [-\alpha,\alpha]$. Then there are $\gamma, \gamma' \in\Gamma$ such that
	\[\gamma g_1= g_2\quad\mbox{and}\quad \gamma' g_1c_{u_1}b_{s_1}a_{t_1}=g_2c_{u_2}b_{s_2}a_{t_2}.\]
	Therefore,
	\begin{eqnarray*}
		\lefteqn{ d_{\G}(\gamma^{-1}\gamma' g_1 c_{u_1}b_{s_1}, g_1 c_{u_1}b_{s_1})}
		\\
		&=&d_{\G}(\gamma^{-1}g_2c_{u_2}b_{s_2}a_{t_2-t_1},g_1c_1b_{s_1})
		=d_{\G}( g_1 c_{u_2}b_{s_2}a_{t_2-t_1}, g_1c_{u_1}b_{s_1})\\
		&=&d_{\G}(c_{u_2-u_1} b_{s_2}a_{t_2-t_1}, b_{s_1})
		\leq d_{\G}(c_{u_2-u_1} b_{s_2}a_{t_2-t_1},e)+d_{\G}(b_{s_1},e)\\
		&\leq& |u_2-u_1|+|s_2|+|t_2-t_1|+|s_1|<4\eps+2\alpha<\sigma_*.
	\end{eqnarray*}
	From the property of $\sigma_*$, this implies that $\gamma^{-1}\gamma'=e$, so that  $\gamma=\gamma'$. 
	Then $g_2c_{u_2}b_{s_2}a_{t_2}=\gamma g_1c_{u_1}b_{s_1}a_{t_1}=g_2c_{u_1}b_{s_1}a_{t_1}$
	yields $c_{u_1}b_{s_1}a_{t_1}=c_{u_2}b_{s_2}a_{t_2}$,
	and consequently $u_2=u_1,s_2=s_1,t_2=t_1$ by considering matrices.
	This leads to $\varphi_{[-\alpha,\alpha]}(x)\cap P_\eps(z)=\{x\}$ and hence $P_\eps(z)$ is a local cross section of time $\eps$.
	For the last assertion, if $x=\Gamma gc_{u_x}b_{s_x},y=\Gamma gc_{u_y}b_{s_y}\in P_\eps(z)$, then
	\[d_X(x,y)\leq d_\G(c_{u_x}b_{s_x},c_{u_y}b_{s_y})\leq |u_x|+|s_x|+|u_y|+|s_y|\leq 4\eps\]
	shows $\diam P_\eps(z)\leq 4\eps$. The same argument can be applied for $\widetilde P_\eps(z)$.
\end{proof}\bigskip
By the same manner as in the previous proof, it follows the next result.
\begin{proposition}
	
	(a) Let ${\tt u}>0, {\tt s}>0$ and $\alpha>0$ be such that $2{\tt u}+2{\tt s}+\alpha<\sigma_*$ and let $z=\Gamma g\in X$. The sets
	\begin{align*} 
		P_{\tt s}^{\tt u}(z)=\{\Gamma gc_ub_s,\ |u|\leq {\tt u}, |s|\leq {\tt s}\},  \ \widetilde{P}_{\tt u}^{\tt s}(z)=\{\Gamma gb_sc_u,\ |s|\leq {\tt s}, |u|\leq {\tt u}\}
	\end{align*}
	are local cross sections of time $\alpha$ and with diameters at most $2({\tt u}+{\tt s})$.
	
	(b) Let $\eps>0,\alpha>0, \tau\in \R$ be such that $2\eps (e^{\tau}+e^{-\tau})+\alpha<\sigma_*$ and let $z=\Gamma g\in X$. The sets
	\[\varphi_{\tau}(P_\eps(z))=\{\Gamma ga_\tau c_{ue^\tau}b_{se^{-\tau}}: |u|\leq \eps,|s|\leq \eps \}
	=P_{\eps e^{-\tau}}^{\eps e^\tau}(\varphi_\tau(z)),\]
	\[ \varphi_{\tau}(\widetilde{P}_{\eps}(z)) =\{ \Gamma g a_\tau b_{se^{-\tau}}c_{u e^\tau}: |s|\leq \eps, |u|\leq \eps\}=\widetilde{P}_{\eps e^{-\tau}}^{\eps e^\tau}(\varphi_\tau(z))\]  are local cross sections of time 
	$\alpha$ and with diameters at most $2\eps(e^{\tau}+e^{-\tau})$. 
\end{proposition}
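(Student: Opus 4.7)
The plan is to mirror the uniqueness argument of Lemma~\ref{lcexpl}, which uses the injectivity radius $\sigma_*$ to rule out non-trivial $\Gamma$-identifications, together with a direct matrix computation that implements the commutation of $a_\tau$ with $c_u$ and $b_s$.

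For part (a), I first observe that $P_{\tt s}^{\tt u}(z)$ is closed as the continuous image of the compact box $[-{\tt u},{\tt u}]\times[-{\tt s},{\tt s}]$, and that $\varphi_{[-\alpha,\alpha]}(P_{\tt s}^{\tt u}(z))=\{\Gamma g c_u b_s a_t:|u|\le{\tt u},\,|s|\le{\tt s},\,|t|\le\alpha\}$. To verify the cross-section property, I take two representations $\Gamma g_1 c_{u_1} b_{s_1} a_{t_1}=\Gamma g_2 c_{u_2} b_{s_2} a_{t_2}$ with $\Gamma g_1=\Gamma g_2=z$, pick $\gamma,\gamma'\in\Gamma$ realizing both identifications, and estimate $d_\G(\gamma^{-1}\gamma' g_1 c_{u_1} b_{s_1},\, g_1 c_{u_1} b_{s_1})$ via left-invariance and Lemma~\ref{at}, obtaining an upper bound of $|u_2-u_1|+|s_2|+|t_2-t_1|+|s_1|\le 2{\tt u}+2{\tt s}+2\alpha$, which is below $\sigma_*$ by the standing hypothesis. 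Lemma~\ref{sigma_0} then forces $\gamma=\gamma'$, and comparing the resulting matrix equality $c_{u_1}b_{s_1}a_{t_1}=c_{u_2}b_{s_2}a_{t_2}$ entry-by-entry yields $(u_1,s_1,t_1)=(u_2,s_2,t_2)$. The diameter bound comes from left-invariance: for $x=\Gamma gc_{u_x}b_{s_x}$ and $y=\Gamma gc_{u_y}b_{s_y}$, one has $d_X(x,y)\le d_\G(c_{u_x}b_{s_x},c_{u_y}b_{s_y})\le 2({\tt u}+{\tt s})$. The case of $\widetilde P_{\tt u}^{\tt s}(z)$ is identical up to reordering the factors $b_s$ and $c_u$.

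For part (b), the key computation is the pair of conjugation identities $a_{-\tau}c_u a_\tau=c_{ue^{\tau}}$ and $a_{-\tau}b_s a_\tau=b_{se^{-\tau}}$, which I verify by direct multiplication of $2\times 2$ matrices in $\SL(2,\R)$. Rearranging, these give $c_u b_s a_\tau=a_\tau c_{ue^{\tau}} b_{se^{-\tau}}$ and $b_s c_u a_\tau=a_\tau b_{se^{-\tau}} c_{ue^{\tau}}$, whence the displayed equalities $\varphi_\tau(P_\eps(z))=P_{\eps e^{-\tau}}^{\eps e^{\tau}}(\varphi_\tau(z))$ and $\varphi_\tau(\widetilde P_\eps(z))=\widetilde P_{\eps e^{-\tau}}^{\eps e^{\tau}}(\varphi_\tau(z))$ follow immediately. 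The cross-section and diameter claims then reduce to applying part (a) at the basepoint $\varphi_\tau(z)$ with ${\tt u}=\eps e^{\tau}$ and ${\tt s}=\eps e^{-\tau}$; the standing assumption $2\eps(e^{\tau}+e^{-\tau})+\alpha<\sigma_*$ is precisely $2{\tt u}+2{\tt s}+\alpha<\sigma_*$, so part (a) applies and gives $\diam\le 2({\tt u}+{\tt s})=2\eps(e^{\tau}+e^{-\tau})$.

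There is no serious obstacle: the whole proposition is bookkeeping on top of Lemma~\ref{lcexpl}. The only place where care is genuinely required is tracking the exponents in the conjugation identities for part (b), since a sign error on $\tau$ would swap the $c$- and $b$-bounds and give an inconsistent description of $\varphi_\tau(P_\eps(z))$.
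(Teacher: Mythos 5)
Your proof of part (b) is correct, and the overall strategy in part (a) — mirror the uniqueness argument of Lemma~\ref{lcexpl} via the injectivity radius — is the intended one (the paper simply refers to "the same manner as in the previous proof"). However, there is a genuine arithmetic gap in part (a): you estimate $d_\G(\gamma^{-1}\gamma' g_1 c_{u_1}b_{s_1},\,g_1 c_{u_1}b_{s_1})$ by $|u_2-u_1|+|s_2|+|t_2-t_1|+|s_1|\le 2{\tt u}+2{\tt s}+2\alpha$ and then assert that this is below $\sigma_*$ "by the standing hypothesis," but the standing hypothesis is $2{\tt u}+2{\tt s}+\alpha<\sigma_*$, which does not control $2{\tt u}+2{\tt s}+2\alpha$. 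The factor of $2\alpha$ arises because you carry two independent flow parameters $t_1,t_2\in[-\alpha,\alpha]$, as in the paper's Lemma~\ref{lcexpl} (whose hypothesis is accordingly the stronger $4\eps+2\alpha<\sigma_*$, i.e.\ $2{\tt u}+2{\tt s}+2\alpha$ with ${\tt u}={\tt s}=\eps$).

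The fix is to observe that the cross-section property of Definition~4.1(b) only requires you to rule out $\varphi_t(x)\in P_{\tt s}^{\tt u}(z)$ for $x\in P_{\tt s}^{\tt u}(z)$ and $0<|t|\le\alpha$; you may therefore normalize $t_1=0$ and take only $|t_2|\le\alpha$. Writing $\Gamma g c_{u_2}b_{s_2}=\Gamma g c_{u_1}b_{s_1}a_t$ and choosing $\gamma\in\Gamma$ with $gc_{u_2}b_{s_2}=\gamma gc_{u_1}b_{s_1}a_t$, the same left-invariance computation gives
\[
d_\G(\gamma g,g)\le |u_1|+|s_1|+\tfrac{1}{\sqrt 2}|t|+|s_2|+|u_2|\le 2{\tt u}+2{\tt s}+\tfrac{\alpha}{\sqrt 2}<2{\tt u}+2{\tt s}+\alpha<\sigma_*,
\]
so $\gamma=e$, and comparing matrices yields $t=0$, $u_1=u_2$, $s_1=s_2$. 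With this one-parameter version of the estimate the stated hypothesis suffices, and the rest of your argument — the closedness, the diameter bound $2({\tt u}+{\tt s})$, the conjugation identities $a_{-\tau}c_u a_\tau=c_{ue^{\tau}}$ and $a_{-\tau}b_s a_\tau=b_{se^{-\tau}}$, and the reduction of (b) to (a) with ${\tt u}=\eps e^{\tau}$, ${\tt s}=\eps e^{-\tau}$ — goes through without change.
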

For a general flow, the following result is not obvious, see \cite{wn}. However, for the geodesic flow on compact factors of the hyperbolic plane, it is quite simple.

\begin{proposition}\label{lmnu}
	For $x\in X$, there is a local cross section $S_x$ of time $\nu_x>0$ so that $x\in \int S_x$.
\end{proposition}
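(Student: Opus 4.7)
The plan is to realize $S_x$ as a small closed Poincar\'e section $P_\eps(x)$ centered at $x$ and then read off the interior condition directly from the coordinalization of Poincar\'e sections. Concretely, I would fix some $\eps\in(0,\sigma_*/8)$ and some $\alpha>0$ with $4\eps+2\alpha<\sigma_*$, write $x=\Gamma g$ for a chosen $g\in\G$, and set
\[ S_x=P_\eps(x)=\{\Gamma gc_ub_s:|u|\le\eps,\,|s|\le\eps\},\qquad \nu_x=\alpha. \]
Lemma \ref{lcexpl} then immediately yields that $S_x$ is a closed local cross section of time $\nu_x$, since the defining inequality $4\eps+2\alpha<\sigma_*$ is exactly the hypothesis of that lemma.

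To show $x\in\int S_x$, I would invoke the coordinalization lemma following Definition \ref{Poindn1}: because $\eps<\sigma_*/4$, the map $\Phi:[-\eps,\eps]^2\to P_\eps(x)$ defined by $\Phi(u,s)=\Gamma gc_ub_s$ is a bijection. Continuity of $\Phi$ (from the definitions of $\theta_s$ and $\eta_u$ and the quotient map $\G\to X$) together with compactness of the square and the Hausdorff property of $X$ forces $\Phi$ to be a homeomorphism onto $S_x$. Under this identification, the relative interior of the two-dimensional disk $S_x$ corresponds to $\Phi((-\eps,\eps)^2)$. Since $x=\Gamma gc_0b_0=\Phi(0,0)$ and $(0,0)$ is strictly interior to $[-\eps,\eps]^2$, we conclude $x\in\int S_x$.

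There is no real obstacle here once Lemma \ref{lcexpl} and the coordinalization lemma are in hand; the only point requiring care is the convention for $\int$, which in the setting of local cross sections for a flow on a three-dimensional manifold denotes the relative interior of the embedded two-dimensional disk (equivalently, the complement of its topological boundary within $S_x$), not the empty interior of $S_x$ as a subset of $X$. With that convention the conclusion is immediate from the fact that $x$ corresponds to the center of the coordinate square.
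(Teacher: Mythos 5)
Your proposal is correct and follows essentially the same route as the paper: choose $S_x$ to be a small closed Poincar\'e section $P_\eps(x)$ and invoke Lemma \ref{lcexpl} to see it is a local cross section. The paper simply takes $\nu_x\in(0,\sigma_*/6)$ so that a single parameter serves both as radius and time and leaves $x\in\int S_x$ implicit, while you decouple the radius from the time and spell out (via the coordinalization lemma and the homeomorphism $\Phi$) why $x$ sits in the relative interior — a useful elaboration, but the same argument.
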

\begin{proof} We can choose $S_x=P_{\nu_x}(x)=\{\Gamma g c_ub_s: |s|,|u|\leq\nu_x \}$ for $\nu_x\in (0,\sigma_*/6)$;
	see Lemma \ref{lcexpl}.
\end{proof}

It is clear that if $S$ is a local cross section of time $\eps$, then $S\times [-\alpha,\alpha]$ is homeomorphic with the compact set $\varphi_{[-\alpha,\alpha]}(S)$.

\begin{definition}[Projection map]\label{prodn}\rm  Let $S$ be a local cross section of time $\alpha$. 
	The map \[\pr_S: \varphi_{[-\alpha,\alpha]}(S)\to S,\quad 
	\pr_S(\varphi_t(x))=x\quad\mbox{for all}\quad t\in[-\alpha,\alpha]\] is called the {\em projection map} to $S$.
\end{definition}

Let $\eps\in (0,\sigma_*/6)$ and $\delta=\delta(\eps)$ be as in Corollary \ref{lpslm}. If  $D$ is a local cross section of time $\eps$ and $T\subset D$ is a closed set such that
$\diam T<\delta$ and $\diam T$ and $d(T, \partial D)>0$. We assume that $\pr_D(\langle x,y\rangle)$ do exist for all $x,y\in T$.
We define 
\begin{equation}\label{projeq} 
	\langle\cdot ,\cdot \rangle_D: T\times T\longrightarrow D, \quad \langle x, y \rangle_D=\pr_D(\langle x, y\rangle).
\end{equation}
See Figure \ref{rec} for an illustration. It is worth mentioning that $\langle x,y\rangle_D\in D$ and may not be in $T$ and $\langle \cdot,\cdot\rangle_D: T\times T\to D$ is continuous.

From now on, we fix $\eps\in (0,\sigma_*/5)$ and  $\delta=\delta(\eps)$ be from Corollary \ref{lpslm}.  The next result determines $\langle x,y\rangle_D$ precisely.

\begin{lemma}\label{proexa}
	Let $D=P_{2\alpha}(z)$ and $T=P_{\alpha/4}(z)$ for $\alpha\in (0,\delta)$ and  $z=\Gamma g\in X$. 
	If $x=\Gamma g c_{u_x}b_{s_x},y=\Gamma g c_{u_y}b_{s_y}\in T$,
	then $$\langle x,y\rangle=\Gamma g c_{u_x}b_{s_x+s}a_v=\Gamma h c_{u_y}b_{s_y}c_{u}, \mbox{\ \ } 
	\langle x,y\rangle_D=\Gamma g c_{u_x}b_{s_w}=\Gamma gc_{u_y}b_{s_y} c_{u}a_{-v}\in D,$$ 
	where $s,u,v, s_w$ are defined by
	\begin{equation}\label{proexaeq} 
		\begin{aligned}
			&s=(s_y-s_x-s_xs_y(u_y-u_x))(1+(u_y-u_x)s_y), \quad 
			u=\frac{u_x-u_y}{1+(u_y-u_x)s_y},
			\\
			& v= -2\ln(1+(u_y-u_x)s_y),\quad 
			s_w=\frac{s_y}{1+(u_y-u_x)s_y}.
		\end{aligned}
	\end{equation}
	
\end{lemma}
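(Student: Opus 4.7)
The approach is to apply Corollary \ref{lpslm}: $\langle x,y\rangle$ is the unique point of $W^s_\eps(\varphi_v(x))\cap W^u_\eps(y)$, and in particular it lies in $W^{ws}_\eps(x)\cap W^u_\eps(y)$. With $x=\Gamma g c_{u_x}b_{s_x}$ and $y=\Gamma g c_{u_y}b_{s_y}$, the two parametrizations
\[ W^{ws}_\eps(x)=\{\Gamma g c_{u_x}b_{s_x}a_v b_s:\ |v|,|s|<\eps\},\qquad W^u_\eps(y)=\{\Gamma g c_{u_y}b_{s_y}c_u:\ |u|<\eps\} \]
must both represent $\langle x,y\rangle$. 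The injectivity-radius argument in the proof of Lemma \ref{prodstr} (all group elements involved have $d_\G$-length bounded by a constant multiple of $\alpha<\sigma_*$) kills the $\Gamma$-ambiguity, reducing the problem to the $\SL(2,\R)$-identity
\[ c_{u_x}b_{s_x}a_v b_s=c_{u_y}b_{s_y}c_u. \]

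The main step is then to multiply out both sides as $2\times 2$ matrices and compare entries. The $(1,1)$ entries give $e^{v/2}=1+s_yu$; combined with the $(2,1)$ entry this yields the linear relation $(u_x-u_y)e^{v/2}=u$, from which I would solve for $u=(u_x-u_y)/(1+(u_y-u_x)s_y)$. Substituting back produces $e^{-v/2}=1+(u_y-u_x)s_y$, which is equivalent to the stated formula for $v$. Finally, the $(1,2)$ entry, rewritten as $s=s_ye^{-v/2}-s_xe^{-v}$, becomes the announced expression for $s$ after a short factorization.

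For the projection, apply $\varphi_{-v}$ to either representative. Using the conjugation identity $a_vb_sa_{-v}=b_{se^v}$ (a one-line matrix check) on the weak-stable representative gives $\langle x,y\rangle_D=\Gamma g c_{u_x}b_{s_x+se^v}$, while the unstable representative yields $\Gamma g c_{u_y}b_{s_y}c_ua_{-v}$ directly. A brief algebraic simplification using the expressions for $s,u,v$ already obtained identifies $s_x+se^v$ with $s_y/(1+(u_y-u_x)s_y)$, which is the claimed $s_w$. To justify that $\pr_D$ is actually defined at $\langle x,y\rangle$, I would also check that $|u_x|\le \alpha/4$ and $|s_w|\le 2\alpha$ follow from $|s_y|,|u_x-u_y|\le \alpha/4$ and the smallness of $\alpha$.

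The only real obstacle is careful bookkeeping: keeping track of two different $\G$-coset representatives of the same point, confirming that the $\Gamma$-ambiguity is genuinely eliminated by the size bound, and carrying the matrix algebra through without sign errors. No deeper idea is needed once the correct $\SL(2,\R)$-equation has been written down.
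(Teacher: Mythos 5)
Your proposal is correct and follows essentially the same route as the paper's proof: both reduce the identity $\langle x,y\rangle\in W^{ws}_\eps(x)\cap W^u_\eps(y)$ to the $\SL(2,\R)$-equation $c_{u_x}b_{s_x}a_vb_s=c_{u_y}b_{s_y}c_u$ (equivalently $b_{-s_x}c_{u_y-u_x}b_{s_y}=a_vb_sc_{-u}$), invoke the injectivity-radius bound from Lemma~\ref{prodstr} to suppress the $\Gamma$-ambiguity, solve the resulting matrix system for $(u,s,v)$, and then apply $\varphi_{-v}$ together with $a_vb_sa_{-v}=b_{se^v}$ to identify $\langle x,y\rangle_D$. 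The only cosmetic difference is that the paper multiplies out $B_{-s_x}C_{u_y-u_x}B_{s_y}$ into a single matrix and then reads off $(v,s,u)$ from Lemma~\ref{decompo}(b), whereas you expand both sides of the equation and compare entries directly; the computations are the same and your intermediate relations $(u_x-u_y)e^{v/2}=u$, $e^{-v/2}=1+(u_y-u_x)s_y$, and $s=s_ye^{-v/2}-s_xe^{-v}$ all check out and yield the stated formulas.
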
 
\begin{proof} Since $\diam T\leq \alpha<\delta$, $\langle x,y\rangle$ do exist for all $x,y\in T$.  First we have
	\[B_{-s_x}C_{u_y-u_x}B_{s_y}= \begin{pmatrix}
		1- (u_y-x_x)s_y& s_x-s_x-s_xs_y(u_y-u_x)\\
		u_y-u_x&1+(u_y-u_x)s_y
	\end{pmatrix}\] 
	and $\pi(B_{-s_x}C_{u_y-u_x}B_{s_y})=b_{-s_x}c_{u_y-u_x}b_{s_y}=(c_{u_x}b_{s_x})^{-1}c_{u_y}b_{s_y}$. 
	Let $s,u,v,s_w$ be defined by \eqref{proexaeq}. Then $s,u,v,s_w\in [-\eps,\eps]$. By Lemma \ref{decompo}\,(b), 
	$b_{-s_x}c_{u_y-u_x}b_{s_y}=a_vb_sc_{-u}$. This implies $\Gamma gc_{u_x}b_{s_x}a_vb_s=\Gamma gc_{u_y}b_{s_y}c_u\in W^s_{\eps }(\varphi_v(x))\cap W^u_\eps(y)
	=\langle x,y\rangle$.
	Furthermore, $\langle x,y\rangle=\Gamma g c_{u_x}b_{s_x}s_{se^v}a_v=\Gamma gc_{u_x}b_{s_w}a_v$ after a short calculation.
	Consequently,  $\varphi_{-v}(\langle x,y\rangle)=\Gamma g c_{u_y}b_{s_y}c_ua_{-v}=\Gamma g c_{u_x}b_{s_w}\in D$ yields 
	$\langle x,y\rangle_D=\Gamma g c_{u_x}b_{s_w}=\Gamma g c_{u_y}b_{s_y}c_ua_{-v}$, proving the lemma. 
\end{proof}

\subsection{Rectangles}
In this subsection, we fix $\eps_*\in (0,\sigma_*/5)$ and $\delta_*=\delta(\eps_*)$ from Corollary \ref{lpslm}.

It was mentioned in the last subsection that for a local cross section $D$ and $R\subset D$, if
$x,y\in R$  then
$\langle x,y\rangle_D$ may not be in $R$.

\begin{definition}[Rectangle]\rm Let $D$ be a local cross section  and 
	$\diam D<\delta$. A subset $\varnothing \ne R\subset D$ is called a {\em rectangle} if
	\begin{itemize}
		\item [{($R_1$)}] $R$ is closed in $D$;
		\item[{($R_2$)}] $\langle x, y \rangle_D\in R$ for all $x,y\in R$. 
	\end{itemize}
\end{definition}
See Figure \ref{rec} for an illustration. In the case that $R$ is a rectangle, for $x,y\in R$ we can write $\langle x,y\rangle_R$
for $\langle x,y\rangle_D$ since it does not depend on $D$.

\begin{figure}[ht]
	\begin{center}
		\begin{minipage}{0.6\linewidth}
			\centering
			\includegraphics[angle=0,width=0.6\linewidth]{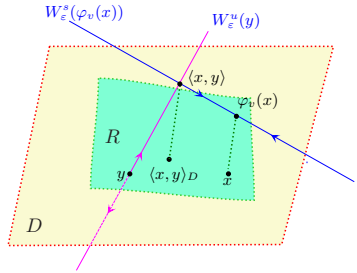}
		\end{minipage}
	\end{center}
	\caption{Rectangle $R$ in local cross section $D$}\label{rec}
\end{figure}
\begin{remark}\label{recrm}\rm (a)
	If $R_1\subset D_1$ and $R_2\subset D_2$ are rectangles
	and $R_1\cap R_2\ne \varnothing$, then 
	$R_1\cap R_2$ is a rectangle.

	\medskip 	\noindent
	(b) If $R\subset D$ is a rectangle, then
	so is $\varphi_\tau(R)$ for appropriately small $\tau\in \R$. Indeed, assume that $x,y\in \varphi_\tau(R)$. Then
	$x=\varphi_\tau(x')$ and $y=\varphi_\tau(y')$ for $x', y'\in R$.
	Write $x'=\Gamma g$ and $y'=\Gamma h$. Assume that
	$\langle x', y'\rangle =
	\Gamma g a_vb_s=\Gamma h c_u$ for some small numbers $v,s,u\in\R$. Then 
	$\langle x, y\rangle
	=\Gamma ga_\tau a_{v'}b_{s'}
	=\Gamma h a_\tau c_{u'}$
	with $v'=v, s'=se^{-\tau}$ and $u'=u e^\tau$.
	This yields
	$\langle x,y\rangle=\Gamma g a_vb_s a_\tau =\varphi_\tau(\langle x',y'\rangle)$.
	If $\varphi_{\lambda}(\langle x',y'\rangle)= \langle x',y'\rangle_{R}\in \R$, then $\varphi_{\lambda}(\langle x,y\rangle)
	=\varphi_{\tau}(\varphi_\lambda(\langle x',y'\rangle))\in  \varphi_\tau(R)$
	implies that $\varphi_{\lambda}(\langle x,y\rangle)=\langle x,y\rangle_{\varphi_\tau(R)}\in \varphi_\tau(R)$.
	{\hfill$\Diamond$}
\end{remark}
The next result gives us an explicit example of rectangles; see  Figure \ref{rect}
for an illustration.

\begin{figure}[ht]
	\begin{center}
		\begin{minipage}{0.6\linewidth}
			\centering
			\includegraphics[angle=0,width=0.7\linewidth]{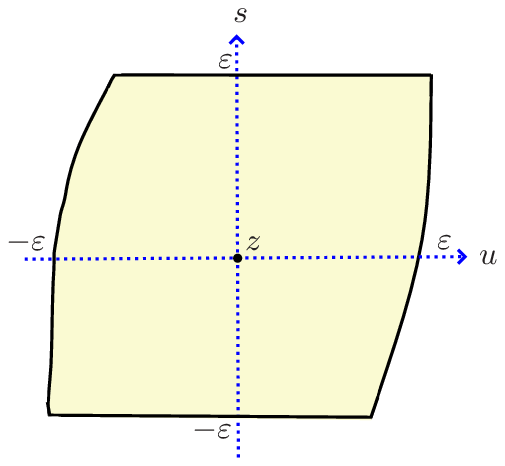}
		\end{minipage}
	\end{center}
	\caption{Rectangle $S_{\eps}(z)$}\label{rect}
\end{figure}

\begin{proposition}[Rectangle]\label{recRthm} Let $\eps\in(0,1)$ be such that $\frac\eps{1-\eps^2}\in (0,\delta_*/4)$ and $z\in X$. The sets
	\begin{eqnarray*}
		S_{\eps}(z)
		&=&
		\Big\{\Gamma g c_u b_s\,:\, u\in [-\eps,\eps],\, s=\frac{ s'}{1-us'}\quad \mbox{for some}\quad s'\in [-\eps,\eps] \Big\},\\
		T_{\eps}(z)
		&=&
		\Big\{\Gamma g b_s c_u\,:\, s\in [-\eps,\eps],\, u=\frac{ u'}{1-su'}\quad \mbox{for some}\quad u'\in [-\eps,\eps] \Big\}
	\end{eqnarray*}
	are rectangles, where $g\in\PSL(2,\R)$ is such that $z=\Gamma g$.
\end{proposition}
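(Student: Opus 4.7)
The plan is to focus on $S_\eps(z)$ first, since the argument for $T_\eps(z)$ is symmetric (under interchange of the roles of $b$ and $c$, i.e.\ of the stable and unstable directions) and I will only sketch it at the end.

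First I would set $\alpha := \eps/(1-\eps^2)$; by hypothesis $\alpha \in (0,\delta_*/4)$. For any $u, s' \in [-\eps,\eps]$ one has $|1-us'|\geq 1-\eps^2>0$, hence $|s| = |s'|/|1-us'| \leq \alpha$. Consequently $S_\eps(z) \subset P_\alpha(z) =: D$, and by Lemma \ref{lcexpl} this $D$ is a local cross section of diameter at most $4\alpha < \delta_*$, so the local product structure of Corollary \ref{lpslm} is available on it. Property $(R_1)$ is then immediate: the continuous map
\[ \Phi: [-\eps,\eps]^2 \longrightarrow D, \qquad \Phi(u, s') := \Gamma g c_u b_{s'/(1-us')}, \]
has compact image equal to $S_\eps(z)$, so this set is closed in $D$.

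For property $(R_2)$ I would pick $x = \Gamma g c_{u_x} b_{s_x}$ and $y = \Gamma g c_{u_y} b_{s_y}$ in $S_\eps(z)$, so that $s_x = s_x'/(1-u_x s_x')$ and $s_y = s_y'/(1-u_y s_y')$ for suitable $s_x', s_y' \in [-\eps,\eps]$. The matrix computation in the proof of Lemma \ref{proexa} (which rests only on Lemma \ref{decompo}(b) and the local product structure) delivers
\[ \langle x, y \rangle_D = \Gamma g c_{u_x} b_{s_w}, \qquad s_w = \frac{s_y}{1+(u_y-u_x)\, s_y}. \]
Substituting the formula for $s_y$ gives the crucial cancellation
\[ s_w = \frac{s_y'/(1-u_y s_y')}{1 + (u_y-u_x)\, s_y'/(1-u_y s_y')} = \frac{s_y'}{(1-u_y s_y')+(u_y-u_x) s_y'} = \frac{s_y'}{1 - u_x s_y'}, \]
exhibiting $s_w$ in the ``twisted'' form with parameter $s'' := s_y' \in [-\eps,\eps]$; hence $\langle x, y \rangle_D \in S_\eps(z)$, as required.

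For $T_\eps(z)$ I would argue identically inside the Poincar\'e section $\widetilde D := \widetilde P_\alpha(z)$: closedness follows from the same compact-image argument, and the mirror of the calculation in the proof of Lemma \ref{proexa} (using Lemma \ref{decompo}(a) in place of (b) to decompose the relevant product $c_{-u_x} b_{s_y-s_x} c_{u_y}$) produces a formula for $\langle x, y\rangle_{\widetilde D}$ whose unstable coordinate has the shape $u_w = u_y/(1+(s_y-s_x)u_y)$. The symmetric cancellation $(1-s_y u_y')+(s_y-s_x) u_y' = 1 - s_x u_y'$ then forces $u_w = u_y'/(1-s_x u_y')$, so $\langle x, y\rangle_{\widetilde D}\in T_\eps(z)$. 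The only real content of the proof is the algebraic cancellation above; the choice of cross section, closedness, and the application of Lemma \ref{proexa} are routine. Put differently, the definitions of $S_\eps(z)$ and $T_\eps(z)$ are arranged precisely so that this cancellation goes through, which is what makes these the ``right'' explicit family of rectangles for the Markov partition to be constructed later.
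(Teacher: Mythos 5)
Your proof is correct and follows essentially the same approach as the paper's: both reduce property $(R_2)$ to the formula $\langle x,y\rangle_D=\Gamma g\,c_{u_x}b_{s_w}$ with $s_w=s_y/(1+(u_y-u_x)s_y)$ (the paper derives this directly via the $s_x+se^v$ computation, you invoke Lemma \ref{proexa}), and then exploit the same cancellation $s_w=s_y'/(1-u_xs_y')$. The only minor difference is that you make closedness $(R_1)$ explicit (via the continuous parametrization $\Phi$) and use $P_\alpha(z)$ with $\alpha=\eps/(1-\eps^2)$ as the ambient cross section, whereas the paper takes $P_{2\eps}(z)$ and leaves $(R_1)$ implicit; neither choice affects the argument.
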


\noindent
\begin{proof} We only prove for $ S:=S_{\eps}(z)\subset P_{2\eps}(z)=:P$. Note that from the assumption, $\diam S<\delta_2$.
	Take $x=\Gamma g c_{u_x}b_{s_x},\ y=\Gamma gc_{u_y} b_{s_y}\in S$.
	Then 
	$\langle x, y\rangle 
	=W^s_\eps(\varphi_v(x))\cap W^u_\eps(y)=\Gamma g c_{u_x}b_{s_x}a_vb_s=\Gamma g c_{u_y}b_{s_y}c_u$
	with 
	\begin{align*}
		v=-2\ln(1+(u_y-u_x)s_y),\
		s&=(s_y-s_x-(u_y-u_x)s_xs_y)(1+(u_y-u_x)s_y),\\
		u&=\frac{u_y-u_x}{1+(u_y-u_x)s_y}.
	\end{align*}
	Rewriting 
	$\langle x,y\rangle=\Gamma g c_{u_x}b_{s_x}a_vb_s=\Gamma g c_{u_x}b_{s_x+se^v}a_v$
	implies that $\langle x,y\rangle_{P}=\Gamma g c_{u_x}b_{s_x+se^v}$. 
	We need to verify $\Gamma g c_{u_x}b_{s_x+se^v}\in S$. 
	A short calculation shows that 
	$s_x+se^v=\frac{s_y}{1+(u_y-u_x)s_y}=\frac{s_y'}{1-u_x s_y'}$, where $s_y'\in [-\eps,\eps]$
	satisfies $s_y=\frac{s_y'}{1-u_ys_y'}$; and thus
	$\langle x, y\rangle_{P} \in S$, which completes the proof. 
	%
	%\noindent
	%(b)
	%Take $x=\Gamma gb_{s_x}c_{u_x},y=\Gamma gb_{y}c_{y}\in T=T_{\eps}^{\eps}(z)$. Then $\langle x,y\rangle=\Gamma g b_{s_x}c_{u_x}a_vb_s=\Gamma g b_{s_y}c_{u_y}c_u$ 
	%with  
	%\[v=-2\ln(1-u_x(s_y-s_x)),\ \
	%s=(s_y-s_x)(1-u_x(s_y-s_x)),
	%\ \ u=\frac{u_x}{1-s_x(u_y-u_x)}-s_y.\]
	%Write  $u_x=\frac{u_x'}{1-s_x u_x'}$ for $u_x'\in [-\eps,\eps]$.
	%Then  \[ u_y+u
	%=\frac{\frac{u_x'}{1-s_x u_x'}}{1-(s_y-s_x) \frac{u_x'}{1-s_x u_x'}  }
	%=\frac{u_x'}{1-s_yu_x'}\] implies (b). 
\end{proof}

The following result is proved by the same manner as the previous theorem.
\begin{proposition}[Rectangle] Let ${\tt s},{\tt u}\in (0,1)$ be such that $\max\{\frac{\tt s}{1-{\tt s u}},\frac{\tt u}{1-{\tt s u}}\}<\delta_*/4$ and $z\in X$. The sets
	\begin{eqnarray*}
		S_{{\tt s}}^{{\tt u}}(z)
		&=&
		\Big\{\Gamma g c_u b_s\,:\, u\in [-{\tt u},{\tt u}],\, s=\frac{ s'}{1-us'}\quad \mbox{for some}\quad s'\in [-{\tt s},{\tt s}] \Big\},\\
		T_{{\tt u}}^{{\tt s}}(z)
		&=&
		\Big\{\Gamma g b_s c_u\,:\, s\in [-{\tt s},{\tt s}],\, u=\frac{ u'}{1-su'}\quad \mbox{for some}\quad u'\in [-{\tt u},{\tt u}] \Big\}
	\end{eqnarray*}
	are rectangles, where $g\in\PSL(2,\R)$ such that $z=\Gamma g$.
\end{proposition}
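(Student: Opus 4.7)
The plan is to mimic the proof of the immediately preceding proposition essentially verbatim, only tracking the asymmetric bounds ${\tt u}$ and ${\tt s}$ separately through the computation. I would treat the case $S:=S_{{\tt s}}^{{\tt u}}(z)$ first; the proof for $T_{{\tt u}}^{{\tt s}}(z)$ is analogous after swapping the roles of the $b$- and $c$-flows (using Lemma \ref{decompo}(a) in place of (b)).

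The first step is to observe that $S$ is closed in the Poincar\'e section $P=P_{{\tt s}/(1-{\tt s}{\tt u})}^{{\tt u}/(1-{\tt s}{\tt u})}(z)$, which by the hypothesis $\max\{{\tt s}/(1-{\tt s}{\tt u}),{\tt u}/(1-{\tt s}{\tt u})\}<\delta_*/4$ is a local cross section of diameter less than $\delta_*$. Indeed, any $x=\Gamma gc_ub_s\in S$ has $|u|\leq {\tt u}$ and $|s|=|s'/(1-us')|\leq {\tt s}/(1-{\tt s}{\tt u})$, so $S\subset P$, and the continuous parametrization $s'\mapsto s'/(1-us')$ shows $S$ is closed in $P$. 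The diameter bound also guarantees that $\langle\cdot,\cdot\rangle_P$ is defined on $S\times S$.

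Next I take $x=\Gamma gc_{u_x}b_{s_x}$ and $y=\Gamma gc_{u_y}b_{s_y}$ in $S$, so $s_x=s_x'/(1-u_xs_x')$ and $s_y=s_y'/(1-u_ys_y')$ for some $s_x',s_y'\in[-{\tt s},{\tt s}]$. Applying Lemma \ref{proexa} gives
\[
\langle x,y\rangle_P=\Gamma gc_{u_x}b_{s_w},\qquad s_w=\frac{s_y}{1+(u_y-u_x)s_y}.
\]
The key algebraic identity is the elementary substitution
\[
\frac{s_y}{1+(u_y-u_x)s_y}=\frac{s_y'/(1-u_ys_y')}{1+(u_y-u_x)s_y'/(1-u_ys_y')}=\frac{s_y'}{1-u_xs_y'},
\]
which shows that $s_w$ is of exactly the required form with $u$-parameter equal to $u_x\in[-{\tt u},{\tt u}]$ and $s'$-parameter equal to $s_y'\in[-{\tt s},{\tt s}]$. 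Hence $\langle x,y\rangle_P\in S$, establishing $(R_2)$.

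The main obstacle in writing this out carefully is the bookkeeping: one must check that the hypothesis on ${\tt s},{\tt u}$ is strong enough to place $S$ inside some Poincar\'e section whose diameter fits within the range where Corollary \ref{lpslm} and Lemma \ref{proexa} apply. However, no genuinely new analytic work is required, since all heavy lifting was done in Lemma \ref{proexa}; the conclusion reduces to the single algebraic identity displayed above, and everything else is just a careful restatement of the preceding proof with ${\tt s}$ and ${\tt u}$ in place of the common $\eps$.
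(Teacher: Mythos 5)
Your proposal is correct and follows essentially the same route as the paper, which simply states that the result "is proved by the same manner as the previous theorem." You carry out precisely that plan: you place $S_{\tt s}^{\tt u}(z)$ in an appropriate Poincar\'e section, invoke the explicit local-product formula $s_w=\frac{s_y}{1+(u_y-u_x)s_y}$, and close with the same algebraic substitution $s_w=\frac{s_y'}{1-u_xs_y'}$ that the paper uses in the symmetric case, tracking only that the $u$- and $s'$-parameters stay in their respective intervals $[-{\tt u},{\tt u}]$ and $[-{\tt s},{\tt s}]$.
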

\begin{corollary} Let $\eps\in(0,1)$ and $\tau\in\R$ be such that $\frac{\eps e^{|\tau|}}{1-\eps^2}<\delta_*/4$. 
	Then $\varphi_\tau(S_\eps(z))$ and $\varphi_\tau(T_\eps(z))$ are rectangles. More precisely,
	$\varphi_\tau(S_\eps(z))=S_{\eps e^{-\tau}}^{\eps e^\tau} (z)$ whereas $\varphi_\tau(T_\eps(z))=T_{\eps e^{\tau}}^{\eps e^{-\tau}} (z)$.
\end{corollary}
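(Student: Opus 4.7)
My plan is to reduce the statement to the ${\tt s},{\tt u}$-version of the rectangle Proposition proved immediately above, by explicitly identifying $\varphi_\tau(S_\eps(z))$ and $\varphi_\tau(T_\eps(z))$ as $S_{\tt s}^{\tt u}$- and $T_{\tt u}^{\tt s}$-rectangles with the appropriate parameters. I will read the ``$z$'' on the right-hand side of the statement as $\varphi_\tau(z)$, since $S_\eps(z)$ is parametrized through a chosen representative $g$ with $z=\Gamma g$, and the natural base point after applying the flow is $\Gamma g a_\tau=\varphi_\tau(z)$.

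The first step is the key commutation identity in $\PSL(2,\R)$. Multiplying out $A_\tau C_v A_{-\tau}$ and $A_\tau B_v A_{-\tau}$ in $\SL(2,\R)$ yields $c_v\,a_\tau=a_\tau\,c_{v e^\tau}$ and $b_v\,a_\tau=a_\tau\,b_{v e^{-\tau}}$, so that
\[
c_u\, b_s\, a_\tau \;=\; a_\tau\, c_{u e^\tau}\, b_{s e^{-\tau}},
\qquad
b_s\, c_u\, a_\tau \;=\; a_\tau\, b_{s e^{-\tau}}\, c_{u e^\tau}.
\]
With $z=\Gamma g$, applying $\varphi_\tau$ to a generic point $\Gamma g c_u b_s$ of $S_\eps(z)$ therefore produces $\Gamma g a_\tau\,c_{\tilde u}\,b_{\tilde s}$ with $\tilde u:=u e^\tau$ and $\tilde s:=s e^{-\tau}$; the $T$-case is identical after swapping $b$ and $c$.

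The second step is to verify the hyperbolic reparametrization of the constraint on the $s$-coordinate. Writing $s=s'/(1-us')$ with $s'\in[-\eps,\eps]$ and setting $\tilde s':=s' e^{-\tau}$, one checks $\tilde u\tilde s'=us'$ and hence
\[
\tilde s \;=\; e^{-\tau}\,s \;=\; \frac{e^{-\tau}s'}{1-us'} \;=\; \frac{\tilde s'}{1-\tilde u\tilde s'},
\]
with $|\tilde u|\le \eps e^\tau$ and $|\tilde s'|\le \eps e^{-\tau}$. Since $(u,s')\mapsto(\tilde u,\tilde s')$ is a bijection of the parameter squares, this yields the set equality $\varphi_\tau(S_\eps(z))=S^{\eps e^\tau}_{\eps e^{-\tau}}(\varphi_\tau(z))$; the analogous reparametrization for $T_\eps(z)$ gives $\varphi_\tau(T_\eps(z))=T^{\eps e^{-\tau}}_{\eps e^\tau}(\varphi_\tau(z))$.

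The third step is to check the hypothesis of the ${\tt s},{\tt u}$-rectangle Proposition for ${\tt u}=\eps e^\tau$ and ${\tt s}=\eps e^{-\tau}$. Since ${\tt s}{\tt u}=\eps^2$ and $\max\{{\tt s},{\tt u}\}=\eps e^{|\tau|}$, the required bound $\max\{{\tt s}/(1-{\tt s}{\tt u}),{\tt u}/(1-{\tt s}{\tt u})\}<\delta_*/4$ reduces to $\eps e^{|\tau|}/(1-\eps^2)<\delta_*/4$, which is precisely the assumption. Thus the Proposition applies and furnishes the rectangle property. The only substantive step in the whole argument is the commutation identity together with the associated reparametrization; everything else is bookkeeping.
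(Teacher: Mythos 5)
Your proof is correct and is exactly the argument the paper intends: use the commutation relations $c_u a_\tau=a_\tau c_{ue^\tau}$ and $b_s a_\tau=a_\tau b_{se^{-\tau}}$ to rewrite $\Gamma g c_u b_s a_\tau=\Gamma g a_\tau c_{ue^\tau}b_{se^{-\tau}}$, then reparametrize and invoke the $S^{\tt u}_{\tt s}$/$T^{\tt s}_{\tt u}$ Proposition. You are also right that the base point on the right-hand side should read $\varphi_\tau(z)$ rather than $z$ (compare the paper's earlier identity $\varphi_\tau(P_\eps(z))=P^{\eps e^\tau}_{\eps e^{-\tau}}(\varphi_\tau(z))$); the statement as printed is a typo, and your reading is the only one consistent with the definitions.
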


\begin{remark}\rm \label{rmrec}
	
	%	(a) For  $\tau\in\R$ such that $\min\{\epse^{|\tau|},\epse^{|\tau|}, \frac{\epse^{|\tau|}}{1-\eps\eps}, \frac{\epse^{|\tau|}}{1-\eps\eps} \}<\eps/4$, we have  $\varphi_\tau(S^\eps_\eps(z))=S^{\epse^{\tau}}_{\epse^{-\tau}}(z)$
	%	and $\varphi_\tau(T^\eps_\eps(z))=T^{\epse^{-\tau}}_{\epse^{\tau}}(z)$.

	It is easy to check that
	\[S_\eps(z)=
	\Big\{\Gamma g  c_ub_s: u\in [-\eps,\eps],\, s\in\Big[\frac{-\eps}{1+\eps u},\frac{\eps}{1+\eps u}\Big]\Big\}
	\subset P_{\frac{\eps}{1-\eps^2}}(z). \]
	Therefore, for a given local cross section $P_{\rho}(z)$, 
	any rectangle $S_\eps(z)$ with $\frac{\eps}{1-\eps^2}<\rho$ has the following property:
	\begin{enumerate}
		\item[(a)] $S_\eps (z)$ is closed and contained in $P_\rho(z)$;
		\item[(b)] $\int S_\eps(z)	= \big\{\Gamma g  c_u b_s: s\in (-\eps,\eps), s=\frac{ s'}{1-us'}\quad \mbox{for some}\quad s'\in (-\eps,\eps) \big\}$.
	\end{enumerate}
	
	Similar properties also hold for $T_\eps(z)$. {\hfill $\Diamond$}
\end{remark}

Each version of rectangles has its own special properties. In this paper we will use both of them.
The following result is a relation between the two versions.
\begin{lemma}\label{rec-pro-lm} Let ${\tt u},{\tt s}\in(0,1)$ be such that $\max\{\frac{\tt s}{1-{\tt s u}},\frac{\tt u}{1-{\tt s u}}\}<\delta_*/4$ and $z\in X$.
	Let $S,T$ be local cross sections and let $S_{\tt s}^{\tt u}(z)\subset S, T_{\tt s}^{\tt u}(z)\subset T$
	be rectangles defined in Proposition \ref{recRthm} such that $\pr_{T}(S_{\tt s}^{\tt u}(z))$ and $\pr_{S}(T_{\tt s}^{\tt u}(z))$ are well-defined. Then 
	\begin{equation} \pr_{T}(S_{\tt s}^{\tt u}(z)) =T_{\tt u}^{\tt s}(z)\quad \mbox{and}\quad 
		\pr_{S}(T_{\tt s}^{\tt u}(z))= S_{\tt u}^{\tt s}(z).
	\end{equation} 
	$S_{\tt s}^{\tt u}(z)$ is the projection of $T_{\tt s}^{\tt u}(z)$ on $S$ and $T_{\tt s}^{\tt u}(z)$ is the projection of $S_{\tt s}^{\tt u}(z)$ on $T$.
\end{lemma}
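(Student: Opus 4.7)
The plan is to use an explicit matrix decomposition in $\PSL(2,\R)$ to interchange the two coordinatizations $c_ub_s$ and $b_{s^*}c_{u^*}$ of a point of $X$, and then verify that the parametric constraint defining $S_{{\tt s}}^{{\tt u}}(z)$ transforms, under $\pr_T$, into exactly the one defining $T_{{\tt u}}^{{\tt s}}(z)$. I will only sketch the first equality; the second is entirely symmetric (swap $b\leftrightarrow c$ and use part (a) of Lemma \ref{decompo} in place of part (b)) and also follows from the first by relabelling ${\tt s}\leftrightarrow{\tt u}$ together with $S\leftrightarrow T$.

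First I would take a generic $p=\Gamma gc_ub_s\in S_{{\tt s}}^{{\tt u}}(z)$ with $u\in[-{\tt u},{\tt u}]$ and $s=s'/(1-us')$ for some $s'\in[-{\tt s},{\tt s}]$. A direct matrix multiplication, or Lemma \ref{decompo}(b) applied to $C_uB_s$ (whose bottom-right entry $1+us$ is positive by virtue of $|us|\le{\tt s u}<1$), yields the key identity
\[
c_ub_s \;=\; b_{s/(1+us)}\, c_{u(1+us)}\, a_{-2\ln(1+us)},
\]
so that $p=\varphi_{-2\ln(1+us)}(\Gamma g\, b_{s/(1+us)}c_{u(1+us)})$. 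Since $\pr_T(S_{{\tt s}}^{{\tt u}}(z))$ is assumed to be well-defined, this forces $\pr_T(p)=\Gamma g\, b_{s^*}c_{u^*}$ with $s^*=s/(1+us)$ and $u^*=u(1+us)$. Substituting $s=s'/(1-us')$ and using the algebraic identity $1+us=1/(1-us')$, the formulas collapse to $s^*=s'$ and $u^*=u/(1-us')$. Setting $s_0:=s^*$ and $u'_0:=u$, one checks at once that $u'_0/(1-s_0u'_0)=u/(1-s'u)=u^*$ and $(s_0,u'_0)\in[-{\tt s},{\tt s}]\times[-{\tt u},{\tt u}]$, which is precisely the defining relation of $T_{{\tt u}}^{{\tt s}}(z)$. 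The correspondence $(u,s')\mapsto(s_0,u'_0)=(s',u)$ is manifestly a bijection of parameter squares, and so $\pr_T$ restricts to a bijection of $S_{{\tt s}}^{{\tt u}}(z)$ onto $T_{{\tt u}}^{{\tt s}}(z)$.

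The main obstacle, if it qualifies as one, is purely bookkeeping: there are several parameter pairs in play ($(u,s)$ together with $s'$ on the $S$-side versus $(s_0,u_0)$ together with $u'_0$ on the $T$-side), and the defining constraints look superficially different in the $c_ub_s$ and $b_sc_u$ formulations. Once the identity $C_uB_s = B_{s/(1+us)}C_{u(1+us)}A_{-2\ln(1+us)}$ is written down explicitly, everything reduces to direct algebraic substitution; no nontrivial hyperbolic geometry is required beyond the bound coming from the hypothesis, which keeps $1-us$ bounded away from zero and thereby controls the time shift $-2\ln(1+us)$ needed for the projection to make sense.
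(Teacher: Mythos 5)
Your proof is correct and follows essentially the same path as the paper's: both hinge on the matrix identity $c_ub_s=b_{s/(1+us)}\,c_{u(1+us)}\,a_{-2\ln(1+us)}$ and the algebraic collapse $1+us=1/(1-us')$, with the bijection of parameter squares supplying the reverse inclusion in place of the paper's explicit second half. (One small inaccuracy: Lemma~\ref{decompo}(b) produces an $a_tb_sc_u$ factorization, not the $b_{\tilde s}c_{\tilde u}a_{\tilde t}$ form you quote, so the ``direct matrix multiplication'' route you also offer is the one that actually applies.)
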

\begin{proof}
	For $x=\Gamma g c_u b_s\in S_{\tt s}^{\tt u}(z)$, we write 
	$x=\Gamma g b_{\tilde s}c_{\tilde u} a_{\tilde t}$ for
	\[ \tilde s=\frac{s}{1+us}, \quad\tilde  u=u(1+us),\quad\tilde  t=-2\ln(1+us).\]
	By the definition of $S_{\tt s}^{\tt u}(z)$, $s=\frac{s'}{1-us'}$ for some $s'\in [-{\tt s},{\tt s}]$. This implies
	that $\tilde s=s'\in [-{\tt s},{\tt s}]$. In addition, $1-\tilde s u=1-\frac{s}{1+us}u=\frac{1}{1+us}$ yields 
	$\tilde u=u(1+us)=\frac{u}{1-\tilde s u}$; hence  $\tilde x=\varphi_{-\tilde t}( x)=\pr_T(x)=\Gamma g b_{\tilde s}c_{\tilde u}\in T_{\tt u}^{\tt s}(z)$ shows that $\pr_T(S_{\tt s}^{\tt u}(z))\subset T_{\tt u}^{\tt s}(z)$. Conversely, if $\tilde x=\Gamma g b_{\tilde s}c_{\tilde u}\in T_{\tt u}^{\tt s}(z)$,
	then
	$\tilde x=\Gamma g c_ub_sa_t$ for 
	\[u=\frac{\tilde u}{1+\tilde u\tilde s},\quad s= \tilde s(1+\tilde u\tilde s), \quad t =2\ln (1+\tilde u\tilde s).\]
	Similarly, we can check that $u\in [-{\tt u},{\tt u}]$ and $s =\frac{\tilde s}{1-u\tilde s}$
	for $\tilde s\in [-{\tt s},{\tt s}]$. Set $x=\Gamma gc_ub_s\in S_{\tt s}^{\tt u}$ 
	to get $x\in S_{\tt s}^{\tt u}$ and $\tilde x=\varphi_{t}(x)=\pr_{T}(x)$, which verifies 
	$T_{\tt u}^{\tt s}(z)\subset \pr_T(S_{\tt s}^{\tt u}(z)) $.  The latter can be proved analogously. 
\end{proof}

Let  $R$ be a rectangle and $x\in R$. We define
\begin{equation}\label{WxT}
	W^s(x,R)=\{\langle x,y\rangle_R, y\in R\}\subset R \quad\mbox{and}\quad
	W^u(x,R)=\{\langle y,x\rangle_R, y\in R \}\subset R.
\end{equation} 
The next result provides  precise forms for $W^s(x,R)$ and $W^u(x,R)$ in the cases $R=S_\eps(z)$
and $R=T_\eps(z)$.  
\begin{proposition}\label{WSlm} (a) Let $S_\eps(z)$ be defined in Proposition \ref{recRthm}. 
	Let $z=\Gamma g$ and $x=\Gamma gc_{u_x}b_{s_x}\in S_\eps(z)$, where $s_x=\frac{s_x'}{1-u_xs_x'}$ for some $s_x'\in [-\eps,\eps]$. Then
	\begin{eqnarray}\label{WsxS}
		W^s(x,S_\eps(z))&=&\Big\{\Gamma gc_{u_x} b_s\,:\,  s=\frac{s'}{1-u_xs'}\quad\mbox{for some} \quad s'\in [-\eps,\eps] \Big\},\\
		\label{WuxS} 
		W^u(x,S_\eps(z))&=&\Big\{\Gamma g c_ub_s\,:\, u\in [-\eps,\eps], \ s=\frac{s'_x}{1-u s'_x}\Big \}.  
	\end{eqnarray}
	(b) Let $T_\eps(z)$ be defined in Proposition \ref{recRthm}. 
	Suppose $z=\Gamma g$ and $x=\Gamma gb_{s_x}c_{u_x}\in T_\eps(z)$, where $u_x=\frac{u_x'}{1-s_xu_x'}$ for some $s_x'\in[-\eps,\eps]$. Then
	\begin{eqnarray}
		W^s(x,T_\eps(z))&=&\Big\{\Gamma gb_s c_u\,: \, s\in [-\eps,\eps],\ u=\frac{u'_x}{1-s u'_x}\Big \},  \\
		W^u(x,T_\eps(z))&=&\Big\{\Gamma gb_{s_x} c_u\,:\,  u=\frac{u'}{1-s_xu'}\quad\mbox{for some} \quad u'\in [-\eps,\eps] \Big\}. \label{WusT}
	\end{eqnarray}
\end{proposition}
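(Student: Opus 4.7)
The plan is to compute $\langle x,y\rangle_R$ explicitly using the group-theoretic decomposition already available and then sweep $y$ through $R$ to recover the stable and unstable slices. For part (a), let $x=\Gamma g c_{u_x}b_{s_x}$ and $y=\Gamma g c_{u_y}b_{s_y}$ in $S_\eps(z)$, and write $s_y=s_y'/(1-u_y s_y')$ with $s_y'\in[-\eps,\eps]$. The calculation already performed in the proof of Proposition \ref{recRthm} (which is itself Lemma \ref{proexa} specialized to these two rectangles) gives
\[
\langle x,y\rangle_{P_{2\eps}(z)}
=\Gamma g\, c_{u_x}\, b_{s_w},
\qquad
s_w=\frac{s_y'}{1-u_x s_y'}.
\]
Since $S_\eps(z)$ is a rectangle, the point $\langle x,y\rangle_{P_{2\eps}(z)}$ in fact lies in $S_\eps(z)$, and so it already equals $\langle x,y\rangle_{S_\eps(z)}$.

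To obtain \eqref{WsxS} I fix $x$ and let $y$ vary over $S_\eps(z)$. Since $u_y$ and $s_y'$ range independently over $[-\eps,\eps]$ but only $s_y'$ enters the formula for $s_w$, the slice $W^s(x,S_\eps(z))$ coincides with the set claimed in \eqref{WsxS}. For \eqref{WuxS} I swap the arguments. The same formula applied to $\langle y,x\rangle$ gives
\[
\langle y,x\rangle_{S_\eps(z)}
=\Gamma g\, c_{u_y}\, b_{s_x/(1+(u_x-u_y)s_x)},
\]
and using $s_x=s_x'/(1-u_x s_x')$ a direct substitution yields the identity
\[
\frac{s_x}{1+(u_x-u_y)s_x}=\frac{s_x'}{1-u_y s_x'}.
\]
As $y$ varies, $u_y$ sweeps $[-\eps,\eps]$ while $s_x'$ remains fixed, which is exactly \eqref{WuxS}.

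Part (b) is handled by an entirely parallel argument in the dual coordinates. For $x=\Gamma g b_{s_x}c_{u_x}$ and $y=\Gamma g b_{s_y}c_{u_y}$ in $T_\eps(z)$, I would start from the product
\[
(b_{s_x}c_{u_x})^{-1}(b_{s_y}c_{u_y}) = c_{-u_x}\,b_{s_y-s_x}\,c_{u_y},
\]
decompose the corresponding $\SL(2,\R)$ matrix as $c_u b_s a_t$ by Lemma \ref{decompo}(a) applied to its lower-left entries, and transport the result into $\widetilde P_{2\eps}(z)$ to read off $\langle x,y\rangle_{T_\eps(z)}$ in the form $\Gamma g b_{s_y} c_{\tilde u}$. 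The stable slice is then traced out by varying $s_y$ with the $c$-coordinate forced, and the unstable slice by varying $u_y$ with $s_x$ fixed, exactly as in part (a). The main obstacle in both parts is the purely algebraic verification that the resulting $b$- or $c$-entry admits the precise fractional form $s'/(1-u_x s')$, $s_x'/(1-u s_x')$, $u'/(1-s_x u')$, or $u_x'/(1-s u_x')$ demanded by the definitions of $S_\eps(z)$ and $T_\eps(z)$; each reduces to a single short substitution using the defining parametrization of membership in the rectangle, and no further subtlety enters.
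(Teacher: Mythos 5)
Your proposal is correct and follows essentially the same route as the paper: both apply the explicit formula for $\langle x,y\rangle_D$ from Lemma~\ref{proexa} (used in the proof of Proposition~\ref{recRthm}) and read off the stable and unstable slices. Your ``sweep'' formulation gives both inclusions simultaneously rather than the paper's separate $\subseteq$/$\supseteq$ steps, and the algebraic identity $\frac{s_x}{1+(u_x-u_y)s_x}=\frac{s_x'}{1-u_y s_x'}$ you verify for \eqref{WuxS} is exactly the computation the paper waves at with ``the technique is similar,'' so if anything your write-up is a bit more complete on that point; the sketch for part (b) parallels the paper's omission of that case.
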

\begin{proof} We prove (a) only. If $w\in W^s(x,S_\eps(z))$, then $w=\langle x, y\rangle_{S_\eps(z)}$
	for some $y=\Gamma g c_{u_y}b_{s_y}\in S_\eps(z)$. By Lemma \ref{lcexpl}, $w=\Gamma g c_{u_w}b_{s_w}$ with $u_w=u_x$ and $s_w=\frac{s_y'}{1-u_xs_y'},$
	where $s_y'\in [-\eps,\eps]$ satisfies $s_y=\frac{s_y'}{1-u_ys_y'}$. 
	Conversely, if $v=\Gamma g c_{u_x}b_{s_v}$ for $s_v=\frac{s_v'}{1-u_xs_v'}$, $s_v\in[-\eps,\eps]$, then $v=\langle x, y\rangle$ with $y=\Gamma g c_{u_y}b_{s_y}$ for $u_y\in[-\eps,\eps]$
	and $s_y=\frac{s_v'}{1-u_ys_v'}$; hence $v\in W^s(x,S_\eps(z))$ and we have \eqref{WsxS}. 
	The technique is similar for \eqref{WuxS}. 
	%
	%(b) For $y=\Gamma g b_{s_y}c_{u_y}\in T_\eps(z)$ and $w=\langle x,y\rangle$. Then by the proof of Theorem \ref{recRthm},
	%$w=\Gamma g b_{s_y}c_{\bar u}$ with
	%$\bar u=\frac{u_x'}{1-s_yu_x'}$. Note that $u_x'=\frac{u_x}{1+s_xu_x}$ implies that
	%	\[\bar u=\frac{\frac{u_x}{1+s_xu_x}}{1-s_y\frac{u_x}{1+s_xu_x}}=\frac{u_x}{1+(s_x-s_y)u_x}.\]
	%Since $y=\Gamma g b_{s_y}c_{u_y}$ is arbitrary in $T_\eps(z)$, we have \eqref{WsxT}.
	%Similarly, for $w=\langle x, y\rangle \in W^u(x,T)$ with $y=\Gamma g b_{s_y}c_{u_y}$,
	%then $w=\Gamma g b_{s_x} c_{\bar u}$ with $\bar u=\frac{u_y'}{1-s_xu_y'}$. Note that 
	%$u_y'\in[-\eps,\eps]$ satisfying $u_y'=\frac{u_y}{1+u_ys_y}$. This implies \eqref{WuxT}. 
\end{proof}
The following results follow directly from the previous proposition.

\begin{corollary}\label{W=rm}
	With the setting in Proposition \ref{WSlm}, the following statements hold.
	\begin{itemize}
		\item[(a)] Let $x=(u_x,s_x)_z, y=(u_y, s_y)_z\in S_\eps(z)$. 
		Then $W^s(x,S_\eps(z))=W^s(y,S_\eps(z))$ if and only if $u_x=u_y$, whereas $W^u(x,S_\eps(z))=W^u(y,S_\eps(z))$ if and only if $s_x'=s_y'$.
		\item [(b)] Let $x= (s_x,u_x)'_z, y=(s_y,u_y)'_z\in T_\eps(z)$.
		Then $W^s(x,T_\eps(z))=W^s(y,T_\eps(z))$ if and only if $u_x'=u_y'$, whereas $W^u(x,T_\eps(z))=W^u(y,T_\eps(z))$ if and only if
		$s_x=s_y $. 
		
	\end{itemize}
\end{corollary}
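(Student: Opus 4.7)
The plan is to deduce both items from the explicit parametrizations established in Proposition \ref{WSlm}, combined with the uniqueness of coordinates on a suitable Poincar\'e section supplied by the coordinalization lemma of Section \ref{Poincsec}. Since $\eps/(1-\eps^2)<\delta_*/4<\sigma_*/4$, every point appearing in the descriptions \eqref{WsxS}--\eqref{WusT} lies in $P_{\eps/(1-\eps^2)}(z)$, so that its coordinates $(u,s)_z$ are uniquely determined.

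For part~(a), the ``if'' direction is immediate: the right-hand side of \eqref{WsxS} depends on $x$ only through $u_x$, and the right-hand side of \eqref{WuxS} depends on $x$ only through $s_x'$. For the ``only if'' direction, assume first that $W^s(x,S_\eps(z))=W^s(y,S_\eps(z))$. Specializing $s'=0$ in \eqref{WsxS} applied to $y$ gives $\Gamma g c_{u_y}\in W^s(y,S_\eps(z))=W^s(x,S_\eps(z))$, so there exist $s'\in[-\eps,\eps]$ and $s=s'/(1-u_x s')$ with $\Gamma g c_{u_y}=\Gamma g c_{u_x}b_{s}$. Both presentations lie in $P_{\eps/(1-\eps^2)}(z)$, and hence uniqueness of coordinates forces $u_y=u_x$ (and $s=0$). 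If instead $W^u(x,S_\eps(z))=W^u(y,S_\eps(z))$, then choosing $u=0$ in \eqref{WuxS} applied to $y$ yields $\Gamma g b_{s_y'}\in W^u(y,S_\eps(z))=W^u(x,S_\eps(z))$, so $\Gamma g b_{s_y'}=\Gamma g c_u b_s$ for some $u\in[-\eps,\eps]$ and $s=s_x'/(1-u s_x')$. Again uniqueness of coordinates in $P_{\eps/(1-\eps^2)}(z)$ gives $u=0$ and $s=s_y'=s_x'$.

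Part~(b) is entirely analogous, replacing \eqref{WsxS}--\eqref{WuxS} by the formulas for $T_\eps(z)$. Specializing $s=0$ or $u'=0$ at the appropriate place reduces each equivalence to the uniqueness of the coordinates $(s,u)'_z$ on $\widetilde P_{\eps/(1-\eps^2)}(z)$, supplied by the same coordinalization lemma.

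There is no substantive obstacle here; the only point that needs care is verifying that all points produced during the specialization $s'=0$, $u=0$, etc.\ actually lie within the range where coordinates are unique. This is precisely the role of the standing assumption $\eps/(1-\eps^2)<\delta_*/4$, which keeps everything inside a Poincar\'e section of radius below $\sigma_*/4$.
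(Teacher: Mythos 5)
Your proof is correct and follows exactly the route the paper intends when it states the corollary ``follows directly from the previous proposition'': you read off the ``if'' directions from the explicit parametrizations \eqref{WsxS}--\eqref{WuxS} and obtain the ``only if'' directions by specializing to a distinguished point and invoking uniqueness of $(u,s)_z$-coordinates on a sufficiently small Poincar\'e section. Your remark that the standing bound $\eps/(1-\eps^2)<\delta_*/4$ keeps all points within the range where the coordinalization lemma applies correctly pins down the one place where care is needed.
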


\begin{corollary} One has
	\[W^s(x, S_\eps(z))=W^s(x)\cap S_\eps(z),\quad W^u(x,T_\eps(z))=W^u(x)\cap T_\eps(z).\]
\end{corollary}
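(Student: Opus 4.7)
My plan is to prove each equality by two inclusions, using the explicit coordinate description of $S_\eps(z)$ and $T_\eps(z)$ together with the uniqueness of Poincaré coordinates.

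I will treat only the stable equality $W^s(x,S_\eps(z))=W^s(x)\cap S_\eps(z)$; the unstable case for $T_\eps(z)$ is entirely parallel, using Proposition~\ref{WSlm}(b) in place of (a). Write $z=\Gamma g$ and $x=\Gamma g c_{u_x}b_{s_x}\in S_\eps(z)$ with $s_x=\frac{s_x'}{1-u_x s_x'}$, $s_x'\in[-\eps,\eps]$. Since $W^s(x)=\{\Gamma gc_{u_x}b_{s_x+t}:t\in\R\}=\{\Gamma gc_{u_x}b_\tau:\tau\in\R\}$, I first observe that the formula \eqref{WsxS} of Proposition~\ref{WSlm}(a) immediately gives $W^s(x,S_\eps(z))\subset W^s(x)$, and since $W^s(x,S_\eps(z))\subset S_\eps(z)$ by definition, the inclusion $W^s(x,S_\eps(z))\subset W^s(x)\cap S_\eps(z)$ is free.

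For the reverse inclusion, take $w\in W^s(x)\cap S_\eps(z)$. Writing $w=\Gamma g c_{u_x}b_\tau$ for some $\tau\in\R$ and, simultaneously, $w=\Gamma g c_u b_s$ with $u\in[-\eps,\eps]$ and $s=\frac{s'}{1-us'}$ for some $s'\in[-\eps,\eps]$, I will use the fact that $S_\eps(z)\subset P_{\eps/(1-\eps^2)}(z)$ (Remark~\ref{rmrec}) together with the hypothesis $\frac{\eps}{1-\eps^2}<\delta_*/4<\sigma_*/4$, which is exactly the regime in which Poincaré coordinates on $P_{\eps/(1-\eps^2)}(z)$ are unique. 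Thus both representations of $w$ must agree: $u=u_x$ and $\tau=s=\frac{s'}{1-u_x s'}$. By the description \eqref{WsxS} this places $w$ in $W^s(x,S_\eps(z))$.

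The only subtlety is the uniqueness of the Poincaré coordinates for $w$; here one must be mildly careful since $\tau$ a priori ranges over all of $\R$, but membership in $S_\eps(z)$ forces $\tau\in[-\frac{\eps}{1-\eps^2},\frac{\eps}{1-\eps^2}]$, which brings us inside the coordinatized Poincaré section and the uniqueness lemma applies. The unstable statement is handled verbatim after swapping the roles of $b_\bullet$ and $c_\bullet$ and using \eqref{WusT} instead of \eqref{WsxS}.
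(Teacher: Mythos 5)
Your forward inclusion is fine: \eqref{WsxS} shows $W^s(x,S_\eps(z))\subset W^s(x)$ and the containment in $S_\eps(z)$ is by definition.

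The problem is in the reverse inclusion. You assert that membership of $w=\Gamma g c_{u_x}b_\tau$ in $S_\eps(z)$ ``forces $\tau\in[-\frac{\eps}{1-\eps^2},\frac{\eps}{1-\eps^2}]$'', but that is not justified and is in fact false. What membership in $S_\eps(z)$ gives you is a representation $w=\Gamma g c_u b_s$ with $u,s$ small; a priori the two expressions $g c_{u_x}b_\tau$ and $g c_u b_s$ differ by a deck transformation $\gamma\in\Gamma$, and nothing rules out $\gamma\neq e$ when $|\tau|$ is large. The coordinatization lemma (Lemma 2.5) only yields uniqueness when \emph{both} representations are already known to lie in a small Poincar\'e box; it cannot be used to deduce that $\tau$ is small. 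Indeed, on the compact quotient $X$ the stable horocycle orbit $W^s(x)=\{\theta_\tau(x):\tau\in\R\}$ of \eqref{Wsu} is dense (Hedlund's minimality), so it re-enters $S_\eps(z)$ for arbitrarily large $|\tau|$, and at those re-entries the small Poincar\'e $u$-coordinate need not equal $u_x$. Consequently, with $W^s(x)$ taken literally as the global stable manifold, the asserted identity is false and the gap cannot be patched. The statement is evidently meant locally (the connected component of $W^s(x)\cap S_\eps(z)$ through $x$, equivalently $W^s_\rho(x)\cap S_\eps(z)$ for a suitable small $\rho$); under that reading your strategy is the right one, since then $|\tau|$ is automatically bounded, both representations of $w$ live inside a single coordinatized Poincar\'e section of radius $<\sigma_*/4$, and uniqueness of coordinates together with \eqref{WsxS} (or Remark~\ref{rmrec}) finishes the argument. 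You should make the local restriction explicit before invoking uniqueness; the same remark applies to the unstable half of the corollary.
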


The next result is another relation between the two versions of rectangles.
\begin{proposition}\label{proa}
	With the setting in Lemma \ref{rec-pro-lm}, for any $x\in S_\eps(z)$ and $w\in T_\eps(z)$, one has
	\begin{enumerate}
		\item[(a)] $\pr_T W^a(x,S_\eps(z))=W^a(\pr_T(x),T_\eps(z))$ \ for \  $a=u, s$;
		\item[(b)] $\pr_S W^a(w,T_\eps(z))=W^a(\pr_S(w),S_\eps(z))$ \ for \ $a=u, s$. 
		
	\end{enumerate}
\end{proposition}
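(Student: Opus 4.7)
My strategy is to leverage the explicit coordinate formulas from Lemma \ref{rec-pro-lm} together with the slice descriptions from Proposition \ref{WSlm} to reduce the statement to a clean coordinate swap. Any point $x = \Gamma g c_u b_s \in S_\eps(z)$ admits the ``primed'' parametrization $(u, s')$ with $s = s'/(1-us')$, and any point $w = \Gamma g b_\sigma c_\tau \in T_\eps(z)$ admits the dual parametrization $(\sigma, \tau')$ with $\tau = \tau'/(1-\sigma\tau')$. The central observation is that in these primed coordinates, the projection $\pr_T\colon S_\eps(z) \to T_\eps(z)$ acts simply as the swap $(u, s') \mapsto (s', u)$.

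To confirm this, I would use the calculation already carried out in the proof of Lemma \ref{rec-pro-lm}: $\pr_T(x) = \Gamma g b_{\tilde s} c_{\tilde u}$ with $\tilde s = s/(1+us)$ and $\tilde u = u(1+us)$. Substituting $s = s'/(1-us')$ gives $\tilde s = s'$ and $\tilde u = u/(1-us')$; solving $\tilde u = \tilde u'/(1-\tilde s\,\tilde u')$ for $\tilde u'$ then yields $\tilde u' = u$. Hence in primed coordinates $\pr_T(x)$ has parameters $(s', u)$, as claimed.

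With this in hand, (a) reduces to pattern matching. By Proposition \ref{WSlm}(a), the slice $W^s(x, S_\eps(z))$ consists of the points with primed coordinates $(u_x, t)$ as $t$ varies over $[-\eps,\eps]$, while $W^u(x, S_\eps(z))$ consists of the points with primed coordinates $(t, s_x')$. Applying the swap sends these to the $T$-primed slices $(t, u_x)$ and $(s_x', t)$ respectively, which are exactly the descriptions from Proposition \ref{WSlm}(b) of $W^s(\pr_T(x), T_\eps(z))$ and $W^u(\pr_T(x), T_\eps(z))$, since $\pr_T(x)$ has primed coordinates $(s_x', u_x)$. Part (b) is then symmetric: an identical computation shows that $\pr_S\colon T_\eps(z) \to S_\eps(z)$ realizes the inverse swap $(\sigma, \tau') \mapsto (\tau', \sigma)$, and the same matching with $S$ and $T$ interchanged delivers the conclusion.

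The main obstacle I anticipate is purely notational: the eight intertwined parameters $(s, s', \sigma, \tilde s, u, u', \tau, \tilde u)$ invite sign and index errors, so the writeup must fix conventions carefully before beginning the computation. Once the coordinate-swap identification of $\pr_T$ is isolated as a preliminary observation, the rest of the proof is an immediate cross-check against Proposition \ref{WSlm}.
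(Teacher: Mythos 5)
Your proposal is correct and follows essentially the same route as the paper: both rest on the explicit coordinate computations from Lemma \ref{rec-pro-lm} and the slice descriptions from Proposition \ref{WSlm}. Your presentation is somewhat tidier because it isolates the observation that $\pr_T$ is a swap $(u,s')\mapsto(s',u)$ in primed coordinates and then pattern-matches, whereas the paper verifies the four inclusions one at a time (invoking Corollary \ref{W=rm} to identify when two slices coincide), but the underlying substance is identical.
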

\begin{proof} (a) Let $z=\Gamma g$ for $g\in \PSL(2,\R)$ and $x=\Gamma gc_{u_x}b_{s_x}$ for $s_x=\frac{s_x'}{1-u_xs_x'}$ with some
	$s_x'\in[-\eps,\eps]$. Using the proof of Lemma \ref{rec-pro-lm}, we have
	$\tilde x=:\pr_T(x)=\Gamma g b_{s_{\tilde x}}c_{u_{\tilde x}} \in T_\eps(z)$ with  $s_{\tilde x}=s_x', u_{\tilde x}=\frac{u_x}{1-s_{\tilde x} u_x}$. 
	If $v\in W^s(x,S_\eps(z))$, then according to \eqref{WsxS},
	$v=\Gamma g c_{u_x}b_{s_v}$ implies $\tilde v:=\pr_{T}(v)=\Gamma g b_{s_{\tilde v}}c_{u_{\tilde v}}$
	with $s_{\tilde v}=s_v'$, $u_{\tilde v}=\frac{u_x}{1-s_{\tilde v}u_x}$. 
	This yields $\tilde v\in W^s(\tilde x,T_\eps(z))$ by Corollary \ref{W=rm}
	and hence $\pr_T W^s(x,S_\eps(z))\subset W^s(\pr_T(x),T_\eps(z))$.
	On the other hand, if  $y\in W^s(\pr_T(x),T_\eps(z))$ then $y=\Gamma g b_{s_y}c_{u_y}$
	with $u_y'=u_{\tilde x}'=u_x$. Setting $v=\Gamma g c_{u_x}s_{v}\in W^s(x,S_\eps(z)) $ with $s_v=\frac{s_y}{1-u_xs_y}$, 
	we obtain $y=\pr_S(v)$ due to the proof of Lemma \ref{rec-pro-lm}. As a result, $ W^s(\pr_T(x),T_\eps(z))\subset \pr_T W^s(x,S_\eps(z))$, proving
	$\pr_T W^s(x,S_\eps(z))=W^s(\pr_T(x),T_\eps(z))$. 
	
	Next, for $y\in W^u(x, S_\eps(z))$, $y=\Gamma g {c_{u_y}}b_{s_y}$ with $s_y'=s_x'$ by Corollary \ref{W=rm}. 
	Then $\pr_{T}(y)=\Gamma g b_{s_{\tilde y}}c_{u_{\tilde y}}$ with $s_{\tilde y}=s_x'=s_{\tilde x}$
	implies  $\pr_T(y)\in W^u(\pr_T(x),T_\eps(z))$. Conversely, 
	if $\tilde w\in W^u(\pr_T(x),T_\eps(z))$, then $\tilde w=\Gamma gb_{s_{\tilde x}}c_{u_{\tilde w}}$.
	Define $w=\Gamma g c_{u_w}b_{s_w}$ for $u_w=\frac{u_{\tilde w}}{1+u_{\tilde w}{s_{\tilde w}}}$ and $ s_w'=s_{\tilde x}$ to have $\pr_T(w)=\tilde w$.
	Also,  $s_w'=s_x'$
	yields  $w\in W^u(x,S_\eps(z))$, which completes the proof of (a). Statement (b) follows from (a). 
\end{proof}

The next result is helpful afterwards.
\begin{lemma}\label{=x=ylm} Let $R$ be a rectangle and $x,y,z,w\in R$. Then
	
	\noindent(a) $\big\langle\langle x, y\rangle_R,z\big\rangle_R=\big\langle x,\langle y,z\rangle_R\big\rangle_R=\langle x, z\rangle_R$;
	
	\noindent(b) if $y\in W^s(x,R)$, then $\langle x,y\rangle_R=y$;
	
	\noindent (c) if $y\in W^u(x,R)$, then $\langle y,x\rangle_R=y$;
	
	\noindent (d) $\big\langle  \langle x,y\rangle_R, \langle z,w\rangle_R\big\rangle_R=\langle x, w\rangle_R$.
\end{lemma}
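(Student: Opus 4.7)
The plan is to reduce all four identities to Lemma \ref{xyz} by pushing its conclusions through the retraction $\pr_D$. The essential enabling observation, which I would verify by a direct matrix computation in the style of the proof of Lemma \ref{prodstr}, is the following pair of invariances of the unprojected bracket, valid whenever $p, z$ are close enough and $|\tau|$ is small:
\[
\langle \varphi_\tau(p), z\rangle = \langle p, z\rangle, \qquad \langle p, \varphi_\tau(z)\rangle = \varphi_\tau(\langle p, z\rangle).
\]
Writing $p = \Gamma g$, $z = \Gamma h$, and decomposing $g^{-1}h = a_t b_s c_u$, one has $\langle p, z\rangle = \Gamma g a_t b_s$; inserting $a_\tau$ after $g$ (i.e.\ shifting $p$ along the flow) is absorbed into $a_t$ without changing the point, whereas inserting $a_\tau$ after $h$ permutes through $b_s c_u$ via $c_u a_\tau = a_\tau c_{u e^\tau}$ and $b_s a_\tau = a_\tau b_{s e^{-\tau}}$ and reappears on the right as an extra factor $a_\tau$. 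Applying $\pr_D$, which collapses local flow orbits, then yields
\[
\langle \varphi_\tau(p), z\rangle_D = \langle p, z\rangle_D = \langle p, \varphi_\tau(z)\rangle_D.
\]

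For (a), I write $\langle x, y\rangle_R = \varphi_{-\sigma}(\langle x, y\rangle)$ for some $|\sigma|$ bounded by the time of $D$. The first invariance gives $\langle \langle x, y\rangle_R, z\rangle = \langle \langle x, y\rangle, z\rangle$, which equals $\langle x, z\rangle$ by Lemma \ref{xyz}(a); projecting by $\pr_D$ yields $\langle \langle x, y\rangle_R, z\rangle_R = \langle x, z\rangle_R$. The parallel identity $\langle x, \langle y, z\rangle_R\rangle_R = \langle x, z\rangle_R$ is obtained in the same way, using the second invariance instead.

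Parts (b), (c), (d) are then short consequences of (a). For (b), by the definition of $W^s(x, R)$ there exists $y' \in R$ with $y = \langle x, y'\rangle_R$, and (a) applied to the triple $(x, x, y')$ gives $\langle x, y\rangle_R = \langle x, \langle x, y'\rangle_R\rangle_R = \langle x, y'\rangle_R = y$. Part (c) is symmetric with the roles of the two arguments reversed, and (d) follows by two applications of (a): $\langle \langle x, y\rangle_R, \langle z, w\rangle_R\rangle_R = \langle x, \langle z, w\rangle_R\rangle_R = \langle x, w\rangle_R$.

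The main obstacle I anticipate is diameter bookkeeping: Lemma \ref{xyz} requires $\diam\{x, y, z, w\} < \delta_2$, and the invariances above only apply as long as all intermediate points stay in a $\delta_*$-neighborhood and the flow time $\sigma$ produced by $\pr_D$ does not exceed the time of the cross section. Since the subsection fixes $\eps_* < \sigma_*/5$ and $\delta_* = \delta(\eps_*)$, and since the lemma tacitly assumes $R$ and its ambient cross section $D$ are small enough, these constraints can be met; I would spell this out by tracking the worst-case diameter through each nested bracket, following the pattern already used in the proof of Lemma \ref{xyz}.
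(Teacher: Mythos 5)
Your argument is correct, and its overall shape matches the paper's: the heart is part (a), and (b)--(d) are one-line consequences of (a) exactly as you carry out. For (a), the paper simply says the proof is ``similar to Lemma~\ref{xyz}\,(a)'' and stops; what it has in mind is to repeat that argument inside the rectangle, using that $\langle x,y\rangle_R$ has the same stable leaf in $R$ as $x$ (and symmetrically the same unstable leaf as $y$), then invoking uniqueness of the intersection. You instead reduce the projected identity to the unprojected Lemma~\ref{xyz}\,(a) by first isolating the two flow-invariances $\langle\varphi_\tau(p),z\rangle=\langle p,z\rangle$ and $\langle p,\varphi_\tau(z)\rangle=\varphi_\tau(\langle p,z\rangle)$, and then applying $\pr_D$ (which absorbs small flow shifts). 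That is a genuine unpacking of the paper's ``similar'': the invariances are exactly what makes the $\pr_D$-step harmless, and they follow from the decompositions $c_u a_\tau=a_\tau c_{ue^\tau}$, $b_s a_\tau=a_\tau b_{se^{-\tau}}$ already present in the proof of Lemma~\ref{prodstr} (small caveat: in your matrix sketch the extra $a_\tau$ re-collects in the \emph{middle}, giving $a_{t+\tau}b_{se^{-\tau}}c_{ue^\tau}$, not on the right; the conclusion is unaffected). Your version is arguably clearer, since it separates the bracket algebra (Lemma~\ref{xyz}) from the projection mechanics. Your (b) is identical to the paper's; your (d), applying (a) twice to get $\langle\langle x,y\rangle_R,\langle z,w\rangle_R\rangle_R=\langle x,\langle z,w\rangle_R\rangle_R=\langle x,w\rangle_R$, is in fact cleaner than the paper's printed chain, which appears to contain a typo (it establishes $\langle\langle y,x\rangle_R,\langle z,y\rangle_R\rangle_R=y$ rather than the stated identity). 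You correctly flag the diameter bookkeeping as the only thing left to pin down; this is a hypothesis the lemma is implicitly carrying (via the smallness of the ambient cross section in Subsection~4.2), and your plan to track it the way the proof of Lemma~\ref{xyz} does is the right one.
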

\begin{proof}
	(a) The proof is similar to Lemma \ref{xyz} (a). 
	
	\noindent(b) Assume that $y\in W^s(x,R)$. Then $y=\langle x, y'\rangle_R$
	for some $y'\in R$ implies that $\big\langle x,y\rangle_R=\langle x,\langle x, y'\rangle_R\big\rangle_R= \langle x, y'\rangle_R=y$ by (a).
	
	\noindent 
	(c) The manner is similar to (a).

	\noindent (d) Using (a)-(c), we have
	$ \big\langle \langle y,x\rangle_R, \langle z,y\rangle_R\big\rangle_R
	=\big\langle  y,\langle x, \langle z,y\rangle_R\rangle_R\big\rangle_R
	=\big\langle  y,\langle x,y\rangle_R\big\rangle_R=\langle y,y\rangle_R=y$.
	\end{proof}

\section{Expansivity}\label{expansection} 
In this section we study a nice property of hyperbolic dynamical systems, named expansivity.
Roughly speaking, 
for more variation of expansivities, the reader can 
if two orbits of the flow are close enough for the whole time then they must be identical. 
\begin{definition}[\cite{bw}] \rm
	Let $(M,d)$ be a compact metric space.  
	A continuous flow $\phi_t:M\longrightarrow M$ is called {\em expansive} if for each $\eps>0$ there exists $\delta=\delta(\eps)>0$ with the following property. 
	If $s:\R\rightarrow \R$ is a continuous function with $s(0)=0$ and 
	\[d(\phi_t(x),\phi_{s(t)}(y))<\delta \quad \mbox{for all}\quad t\in \R,\]
	then $y=\phi_\tau(x)$ for some $\tau\in(-\eps,\eps)$.
\end{definition}
The next result was initially introduced  in \cite{bo-symb} to prove the expansivity of general hyperbolic flows. 
Expansivity of the flow $(\varphi_t)_{t\in \R}$ was reproved in \cite{Hien} by a new approach, using the injectivity radius. 
\begin{theorem}[\cite{bo-symb}]\label{expan-thm}
	For each $\eps>0$ there is a $\delta=\delta(\eps)>0$ with the following property.
	If $x,y\in X$, $L>0$ and $s:\R \to \R$ continuous with $s(0)=0$ satisfy
	\begin{equation}\label{tst}
		d_X(\varphi_t(x),\varphi_{s(t)}(y))\leq \delta\quad \mbox{for all}\quad t\in[-L,L],
	\end{equation} 
	then \begin{equation}\label{sttt}
		|s(t)-t|\leq \eps\quad\mbox{for all}\quad t\in[-L, L].
	\end{equation} 
	Furthermore, let $w=\langle x,y\rangle=W^s_{\eps}(\varphi_v(x))\cap W^u_\eps(y)$ for appropriate $v\in (-\eps,\eps)$ in Corollary \ref{lpslm}.
	Then 
	\begin{equation}\label{sL}
		d_X(\varphi_{t}(w),\varphi_{t}(x))<2\eps \quad \mbox{for all}\quad t\in [-L,L],
	\end{equation} 
	\begin{equation}\label{uL}d_X(\varphi_t(w),\varphi_t(y))<3\eps\quad\mbox{for all}\quad t\in [-L,L],
	\end{equation}
	and
	\begin{equation}\label{exp}
		d_X(y,\varphi_v(x))< 2\eps e^{-L}.
	\end{equation} 
	In particular, the flow $(\varphi_t)_{t\in\R}$ is expansive.
\end{theorem}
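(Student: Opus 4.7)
The plan is to reduce everything to an explicit matrix computation on $\G = \PSL(2,\R)$ by lifting the trajectories, using the injectivity radius from Lemma \ref{sigma_0} to ensure the lift is globally well-defined on $[-L, L]$. First I would apply Corollary \ref{lpslm} at $t = 0$ to obtain $v_0 \in (-\eps,\eps)$ and $w := \langle x,y\rangle \in W^s_\eps(\varphi_{v_0}(x))\cap W^u_\eps(y)$. Choosing lifts $x = \Gamma g$, $y = \Gamma h$ with $d_\G(g, h) \leq \delta$, Lemmas \ref{konvexa} and \ref{decompo}(b) yield a decomposition $g^{-1}h = a_{v_0}b_{s_0}c_{u_0}$; for $\delta$ small enough and with $2\delta < \sigma_*$, all three parameters can be forced to be at most $\eps/4$ in absolute value.

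Next I would introduce the continuous family of lifts $g_t := ga_t$ and $h_t := ha_{s(t)}$ on $[-L,L]$. For each $t$, the element $\gamma_t \in \Gamma$ realising $d_X(\varphi_t(x),\varphi_{s(t)}(y)) = d_\G(g_t, \gamma_t h_t)$ is unique, since any other candidate would force two distinct elements of $\Gamma$ to move $h_t$ through a distance below $\sigma_*$, contradicting Lemma \ref{sigma_0}. Because $\gamma_0 = e$, continuity of $t \mapsto (g_t, h_t)$ combined with discreteness of $\Gamma$ forces $\gamma_t \equiv e$ on the connected interval $[-L, L]$, hence $d_\G(g_t, h_t) \leq \delta$ throughout. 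Using the commutation identities $b_\beta a_\tau = a_\tau b_{\beta e^{-\tau}}$ and $c_\gamma a_\tau = a_\tau c_{\gamma e^\tau}$, a short computation yields
\[ g_t^{-1} h_t = a_{v_0 - t + s(t)}\,b_{s_0 e^{-s(t)}}\,c_{u_0 e^{s(t)}}. \]

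Applying Lemma \ref{konvexa} pointwise in $t$, the unique $\SL(2,\R)$-representative of $g_t^{-1}h_t$ near $E_2$ has all entries within $C\delta$ of those of $E_2$. It must coincide with $A_{v_0-t+s(t)}B_{s_0 e^{-s(t)}}C_{u_0 e^{s(t)}}$ (the $-$ sign is excluded because the $(2,2)$-entry $e^{-(v_0-t+s(t))/2}$ is positive), and reading off each matrix entry forces
\[ |v_0 - t + s(t)| \leq C\delta, \quad |s_0 e^{-s(t)}| \leq C\delta, \quad |u_0 e^{s(t)}| \leq C\delta \]
for all $t \in [-L, L]$. The first inequality gives \eqref{sttt}: $|s(t) - t| \leq |v_0| + C\delta \leq \eps$ once $\delta$ is small enough. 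Substituting $t = -L$ and $t = L$ into the other two and using $|s(\pm L) \mp L| \leq \eps$ produces the exponential bounds $|s_0|, |u_0| \leq C\delta e^{\eps - L}$, which already yields \eqref{exp}: from $h = g a_{v_0}b_{s_0}c_{u_0}$ we obtain $d_X(y,\varphi_{v_0}(x)) \leq |s_0| + |u_0| \leq 2\eps e^{-L}$ for $\delta$ small.

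The inequalities \eqref{sL} and \eqref{uL} then follow from writing $\varphi_t(w) = \Gamma g a_{v_0+t}b_{s_0 e^{-t}} = \Gamma h a_t c_{-u_0 e^t}$ and applying the triangle inequality: on $[-L,L]$ the exponential decay of $|s_0|$ and $|u_0|$ absorbs the factors $e^{|t|}$, leaving bounds of order $|v_0|/\sqrt{2} + O(\delta) < 2\eps$ and $O(\delta) < 3\eps$ respectively. The formal expansivity assertion is then the limit $L \to \infty$: the bound $|s_0|, |u_0| \leq C\delta e^{\eps - L}$ forces $s_0 = u_0 = 0$, whence $y = \varphi_{v_0}(x)$. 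The main obstacle I expect is verifying that $\gamma_t \equiv e$ on $[-L, L]$, which is delicate because it requires simultaneously exploiting the injectivity radius, the smallness of $\delta$, and the continuity of the trajectories; once this is secured, the remainder of the argument is essentially matrix-entry computation together with careful bookkeeping of exponentials.
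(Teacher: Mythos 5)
Your proof is correct, and it is genuinely more self-contained than the paper's own argument, though both proceed by lifting to $\G=\PSL(2,\R)$ and examining a matrix of the form $A_{-t}GA_{s(t)}$. The paper's proof delegates two of its key steps to other sources: it cites the proof of Theorem 3.2 in \cite{Hien} for the fact that $\gamma(t)\equiv\gamma(0)$ on $[-L,L]$, and it invokes Lemma \ref{eps0} (imported from \cite{partII}) as a black box to obtain the exponential bounds $|s|,|u|<\eps e^{-L}$. You replace both. For the first, your uniqueness-of-the-minimizing-$\gamma_t$ argument (two candidates within $\delta$ of $g_t$ force $d_\G(h_t,\gamma^{-1}\gamma'h_t)\le 2\delta<\sigma_*$) plus local constancy and connectedness of $[-L,L]$ is a clean, correct proof of $\gamma_t\equiv e$. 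For the second, you observe that the same matrix whose diagonal entries yield \eqref{sttt} (the paper stops there) has off-diagonal entries $e^{(v_0-t-s(t))/2}s_0$ and $e^{(-v_0+t+s(t))/2}u_0$, and reading these off at $t=\mp L$ via Lemma \ref{konvexa} gives $|s_0|,|u_0|\le C e^{\eps-L}\cdot O(\delta)$ directly; this inlines what the paper outsources to Lemma \ref{eps0}, and as a bonus gives a sharper bound for \eqref{uL} than $3\eps$. Two small points of bookkeeping you should be careful with if you write this up: Lemma \ref{konvexa} is stated in the form ``for each $\eps_1$ there is $\delta(\eps_1)$ with entries within $\eps_1$,'' not a linear $C\delta$ bound, so the constants should be phrased accordingly; and you should note explicitly that $|v_0|$ is itself $O(\eps_1)$ (not merely $<\eps$) from the decomposition of $g^{-1}h$, otherwise $|s(t)-t|\le|v_0|+O(\eps_1)$ would not be $\le\eps$. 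Neither affects the correctness of the approach.
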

\begin{proof}
	We follow the proof of Theorem 3.2 in \cite{Hien} for the first part.  
	Let $\eps>0$ be given and $\rho=\rho(\eps)$  as in Lemma \ref{eps0}.
	Let $\delta_1=\delta_1(\rho)$ be as in Corollary \ref{lpslm}. 
	Let $\delta_2=\delta_2(\eps_1)$ be as in Lemma \ref{konvexa}, where $\eps_1=\frac{e^{\rho/2}-1}{e^{\rho/2}+1}$. We define $\delta=\min\{\delta_1,\delta_2\}$. 
	
	\underline{Step 1:} Proof of \eqref{sttt}.
	Write $x=\Gamma g, y=\Gamma h$ for $g,h\in\PSL(2,\R)$ and fix $L>0$. 
	For each $t\in [-L,L]$, there is $\gamma(t)\in\Gamma$ so that
	\begin{eqnarray}\label{gammaL}
		d_X(\varphi_{s(t)}(y),\varphi_t(x))
		=d_X(\Gamma ha_{s(t)}, \Gamma ga_t)
		=d_\G(\gamma(t) ha_{s(t)},ga_t)<\delta.
	\end{eqnarray}
	It was shown in the proof of \cite[Theorem 3.2]{Hien} that 
	\begin{equation*} \gamma(t)=\gamma(0)\quad\mbox{ for all }\quad t\in [-L,L].
	\end{equation*} 
	Setting $\gamma_0=\gamma(0)$, \eqref{gammaL} becomes
	%We first verify that 
	%\begin{equation} \gamma(t)=\gamma(0)\quad\mbox{ for all }\quad t\in [-L,L].
	%\end{equation} 
	%Indeed, since $s:[-L,L]\to\R$ is uniformly continuous, there is $0<\rho=\rho(L,\delta)<\delta$
	%such that if $t_1,t_2\in [-L,L]$ and $|t_1-t_2|<\rho$ then 
	%$|s(t_1)-s(t_2)|<\delta$.
	%Denote $c_1(t)=ga_t$ and $c_2(t)=ha_{s(t)}$.
	%For any $t_1,t_2\in\R$, we have
	%\begin{eqnarray*}
	%	d_\G(\gamma(t_2)^{-1}\gamma(t_1)c_1(t_1),c_1(t_1))
	%	&\leq& d_\G(\gamma(t_1)c_1(t_1),\gamma(t_2)c_1(t_1))
	%	\\
	%	&\leq& d_\G(\gamma(t_1)c_1(t_1),c_2(t_1))
	%	+d_\G(c_2(t_1),c_2(t_2))
	%	\\ &&
	%	+\ d_\G(c_2(t_2),\gamma(t_2)c_1(t_2))
	%	\\
	%	&&+\ d_\G(\gamma(t_2)c_1(t_2),\gamma(t_2)c_1(t_1))
	%	\\
	%	&\leq& 2\delta+\frac{1}{\sqrt 2}|s(t_1)-s(t_2)|+|t_1-t_2|
	%	\\
	%	&\leq& 2\delta +|s(t_1)-s(t_2)|+|t_1-t_2|.
	%\end{eqnarray*}
	%Here if we specialize this to $t_1=0$ and $t_2\in [0,\rho]$,
	%then $|t_1-t_2|<\rho$ implies $|s(t_1)-s(t_2)|<\delta$ and 
	%\[d_\G(\gamma(t_2)^{-1}\gamma(t_1)c_1(t_1),c_1(t_1))<4\delta <\sigma_*\]
	%From the property of $\sigma_*$, it follows that 
	%$\gamma(t_2)=\gamma(0)$ for all $t_2\in [0,\rho]$. Then we repeat the argument for
	%$t_1=\rho$ and $t_2\in [\rho, 2\rho]$, we deduce that $\gamma(t)=\gamma(0)$ for all $t\in [0,2\rho]$, which upon further iteration lead to $\gamma(t)=\gamma(0)$ for all $t\in [0,L]$
	%and similarly $\gamma(t)=\gamma(0)$ for all $t\in [-L,0]$. Put $\gamma_0=\gamma(0)\in\Gamma$,
	%then \eqref{gammaL} shows that 
	\begin{equation}\label{AGA} d_\G(a_{-t}g^{-1}\gamma_0 ha_{s(t)},e)=d_\G(\gamma_0 ha_{s(t)},ga_t)<\delta\quad \mbox{for all}\quad t\in [-L,L].
	\end{equation} 
	Write $g^{-1}\gamma_0 h=\pi(G)$ for $G=\big({\scriptsize\begin{array}{cc} a&b\\c&d\end{array}}\big)$ and
	\[A_{-t}GA_{s(t)}=\bigg(\begin{array}{cc} ae^{\frac{s(t)-t}2}&be^{-\frac{s(t)+t}2}\\c e^{\frac{s(t)+t}2}&d e^{\frac{t-s(t)}2}\end{array}\bigg).\]
	Using Lemma \ref{konvexa}, \eqref{AGA} implies that
	\[||a|e^{\frac{s(t)-t}2}-1|+||d| e^{\frac{t-s(t)}2}-1|<\eps_1\quad \mbox{for all}\quad |t|\leq L,\]
	or equivalently 
	\begin{eqnarray}\label{ad} 
		1-\eps_1\leq |a|e^{\frac{s(t)-t}2}\leq 1+\eps_1\quad \mbox{and}\quad 1-\eps_1\leq |d|e^{\frac{t-s(t)}2}\leq 1+\eps_1
		\quad \mbox{for all} \quad |t|\leq L.
	\end{eqnarray}
	Suppose, on the contrary, that $s(t)-t>\eps$ for some $|t|\leq L$. Then 
	$|a| e^{\frac{s(t)-t}2}>(1-\eps_1) e^{\frac{\rho}2}=(1-\eps_1)\frac{1+\eps_1}{1-\eps_1}=1+\eps_1$, which contradicts  \eqref{ad}.
	Therefore  $s(t)-t<\rho$ for all $|t|\leq L$. Similarly, $t-s(t)<\rho$ for all $|t|\leq L$, so
	\begin{equation}\label{strho}|s(t)-t|<\rho\quad \mbox{for all}\quad |t|\leq L. 
	\end{equation}
	Since $\rho<\eps$, we have  \eqref{sttt}. 
	
	\underline{Step 2:} Proof of \eqref{uL}-\eqref{sL}.  Recall from Corollary \ref{lpslm}  that there are $s,u,v\in [-\rho,\rho]$ such that 
	$w=\langle x,y\rangle=\Gamma ga_vb_{s}=\Gamma h c_{u}$. Then for $t\geq 0$, using lemmas \ref{at} and \ref{tslm}, we get
	\begin{align} \notag 
		d_X(\varphi_t(w),\varphi_t(x))
		&\leq  d_\G(ga_vb_{s}a_t,ga_t)
		=d_\G(a_vb_{se^{-t}},e)\\ \notag 
		&= d_\G(b_{se^{-t}}, a_{-v})
		\leq d_\G(b_{se^{-t}}, e)+ d_\G(a_v, e)\\
		&\leq \frac{1}{\sqrt 2}|v|+|s|e^{-t}<2\rho. \label{eq1}
	\end{align} 
	Together with \eqref{strho} this implies that for $t\in [0,L]$
	\begin{eqnarray}
		d_X(\varphi_t(w),\varphi_t(y)) \notag 
		&\leq& 
		d_X(\varphi_t(w),\varphi_t(x))
		+d_X(\varphi_t(x),\varphi_{s(t)}(y))
		+d_X(\varphi_{s(t)}(y),\varphi_t(y))
		\\ \notag 
		&\leq & 
		\frac{1}{\sqrt 2}|v|+|s|e^{-t}+\delta+\frac{1}{\sqrt 2}|s(t)-t|
		\\ \label{eq2}
		&<&\frac{1}{\sqrt 2}\rho +\rho+\delta+\frac{1}{\sqrt 2}\rho<3\rho. 
	\end{eqnarray}
	Furthermore, $w\in W^u_\rho(y)$ yields
	\begin{equation}\label{eq4}
		d_X(\varphi_t(y),\varphi_t(w))<\rho e^{-t}\quad\mbox{for all}\quad t<0.
	\end{equation}
	In conjunction with \eqref{eq2} and $\rho<\eps$ this proves \eqref{sL}.
	Analogously, it follows from \eqref{eq4} and \eqref{tst} that for $t\in [-L,0]$
	\begin{eqnarray*}
		d_X(\varphi_t(w),\varphi_t(x))
		&\leq&
		d_X(\varphi_t(w),\varphi_t(y))
		+d_X(\varphi_t(y),\varphi_{s(t)}(x))
		+d_X(\varphi_{s(t)}(x),\varphi_t(x))
		\\
		&\leq &|u|e^{t}+\delta+\frac{1}{\sqrt 2}|s(t)-t|
		<\rho+\delta+\frac{1}{\sqrt 2}\rho<2\rho. 
	\end{eqnarray*}
	As a consequence,  the statement  \eqref{uL} is proved, using  \eqref{eq1}. 
	
	\underline{Step 3:} Proof of \eqref{exp}. Now, define $\tilde x=\Gamma g a_v$. Then 
	\begin{align*}
		d_X(\varphi_t(w),\varphi_t(\tilde x))
		&\leq  d_X(\varphi_t(w),\varphi_t(x))+ d_X(\varphi_t(x),\varphi_t(\tilde x))
		\\
		&\leq 2\rho+\frac{1}{\sqrt 2} |v| <3\rho \ \mbox{for all}\ t\in [-L,0],
	\end{align*} 
	owing to \eqref{eq1}. It follows from Lemma \ref{eps0}\,(b) that $|s|<\eps e^{-L}$; recall that $w=\Gamma g a_v b_s$.
	Also, using $w=\Gamma hc_u$, $y=\Gamma g$, \eqref{eq2} and Lemma \ref{eps0}\,(a), we get $|u|< \eps e^{-L}$. 
	Now, due to $w=\Gamma ga_vb_s=\Gamma h c_u$,
	\begin{eqnarray*}
		d_X(y,\varphi_v(x))
		&=&d_X(\Gamma h,\Gamma ga_v )=d_X(\Gamma ga_vb_sc_{-u},\Gamma g a_v)
		\\
		&\leq&|s|+|u|< 2\eps e^{-L},
	\end{eqnarray*}
	which is \eqref{exp}.
	Finally, let $L\to \infty$ to have   $y=\varphi_v(x)$, which shows the expansivity of the flow $(\varphi_t)_{t\in\R}$. 
	The proof is complete.
\end{proof} 
Now, we use the expansivity to prove the following auxiliary result, which was introduced in \cite{bo-symb} without a proof. This result will be used several times in Section \ref{Markovsection}.

\begin{lemma}\label{DD'}
	Let $\eps\in(0,\sigma_*/6)$ and  $D=P_\eps(z)$  and $D'=P_\eps(z')$.
	There exists $\delta=\delta(\eps)>0$ with the following property.
	Suppose that	$x,y\in D$, $\langle x,y\rangle_D$ exists and
	there is a continuous function
	$s:[0,T]\rightarrow \R$ with $s(0)=0$ so that
	\[d_X(\varphi_t(x),\varphi_{s(t)}(y))\leq\delta\quad\mbox{for all}\quad t\in [0,T],\]
	$ \varphi_T(x),\varphi_{s(T)}(y)\in D'$ and
	$\langle \varphi_T(x),\varphi_{s(T)}(y)\rangle_{D'}$
	exists.
	Then
	\begin{equation}\label{D=D'}
		\varphi_T(\langle x,y\rangle_D)=\langle\varphi_T(x),\varphi_{s(T)}(y)\rangle_{D'}.
	\end{equation}
\end{lemma}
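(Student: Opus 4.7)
The approach is to exploit the flow-invariance of the local product structure together with a careful accounting of the flow-time offsets introduced by the projections $\pr_D$ and $\pr_{D'}$; the equality $\varphi_T(p)=q$ will then follow from an exact algebraic cancellation rather than any perturbative argument.

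Write $x=\Gamma g$ and $y=\Gamma h$. Choosing $\delta$ small enough to invoke Corollary \ref{lpslm} at both endpoints and Theorem \ref{expan-thm} on $[0,T]$, let $w:=\langle x,y\rangle = \Gamma g a_v b_s = \Gamma h c_{-u}$ with $|v|,|s|,|u|\le \eps$, and set $p:=\pr_D(w)=\langle x,y\rangle_D$. A direct matrix calculation in the spirit of Lemma \ref{proexa}, using $b_{s_x}a_v b_s = a_v b_{s_x e^{-v}+s}$, shows that the flow offset from $p$ to $w$ is exactly $v$, i.e.\ $w=\varphi_v(p)$. Similarly set $w':=\langle \varphi_T(x),\varphi_{s(T)}(y)\rangle = \varphi_T(x)\cdot a_{v'}b_{s'} = \varphi_{s(T)}(y)\cdot c_{-u'}$ with $|v'|,|s'|,|u'|\le \eps$, and $q:=\pr_{D'}(w')$ with $w'=\varphi_{v'}(q)$. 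Theorem \ref{expan-thm} yields $|s(T)-T|\le \eps$.

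The key geometric observation is the double description
\[ \varphi_T(w)=\Gamma g a_{v+T} b_{se^{-T}}=\varphi_T(x)\cdot a_v b_{se^{-T}}\in W^{ws}_\eps(\varphi_T(x)), \]
and simultaneously
\[ \varphi_T(w)=\Gamma h a_T c_{-ue^T}=\varphi_T(y)\cdot c_{-ue^T}\in W^u(\varphi_T(y))\subset W^{wu}(\varphi_{s(T)}(y)), \]
the last inclusion holding because $\varphi_T(y)$ and $\varphi_{s(T)}(y)$ lie on the same flow orbit. The intersection $W^{ws}(\varphi_T(x))\cap W^{wu}(\varphi_{s(T)}(y))$ is locally a single flow orbit, so $\varphi_T(w)$ and $w'$ lie on that common orbit: $\varphi_T(w)=\varphi_\mu(w')$ for some $\mu$. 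Comparing weak-stable coordinates via $\varphi_\mu(w')=\varphi_T(x)\cdot a_{v'+\mu}b_{s'e^{-\mu}}$ and matching against $\varphi_T(x)\cdot a_v b_{se^{-T}}$ forces $\mu=v-v'$.

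The flow-offset bookkeeping now produces the exact equality
\[ \varphi_T(p)=\varphi_T(\varphi_{-v}(w))=\varphi_{-v}(\varphi_T(w))=\varphi_{-v+\mu}(w')=\varphi_{-v+\mu+v'}(q)=\varphi_{-v+(v-v')+v'}(q)=q. \]
The main obstacle is justifying that the flow offsets produced by the two projections equal precisely the $a_v$- and $a_{v'}$-factors appearing in the decompositions of $w$ and $w'$; this is the nontrivial computational step and relies on the explicit Iwasawa-type decomposition already recorded in Lemma \ref{proexa}. Once that identification is established, the desired identity $\varphi_T(\langle x,y\rangle_D)=\langle \varphi_T(x),\varphi_{s(T)}(y)\rangle_{D'}$ reduces to the algebraic cancellation $-v+(v-v')+v'=0$, with the smallness of $\delta$ entering only to guarantee existence of the local product structure points $w$ and $w'$ via Corollary \ref{lpslm}.
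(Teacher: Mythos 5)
Your approach is genuinely different from the paper's: instead of directly computing the Iwasawa coordinates of both sides via the matrix $A$ encoding $g^{-1}h$ and its evolution under conjugation by $a_T$ (which is what the paper does, reducing the lemma to the observation $s_2 e^{v_2}=s_1 e^{v_1-T}$), you argue geometrically that $\varphi_T(w)$ and $w'$ lie on a common local flow orbit and then track the flow offsets $v,v',\mu$. The bookkeeping identity $-v+\mu+v'=0$ with $\mu=v-v'$ is correct, and this route is conceptually attractive because it isolates the algebraic cancellation from the geometry.

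However, there is a genuine gap in the step ``the intersection $W^{ws}(\varphi_T(x))\cap W^{wu}(\varphi_{s(T)}(y))$ is locally a single flow orbit, so $\varphi_T(w)$ and $w'$ lie on that common orbit.'' You have $\varphi_T(w)=\varphi_{s(T)}(y)\,c_{-ue^{s(T)}}\,a_{T-s(T)}$, whose unstable coordinate relative to $\varphi_{s(T)}(y)$ is $-ue^{s(T)}$. With only the crude bound $|u|\le\eps$ that you record, this quantity can be of size $\eps e^{T}$, hence not remotely local: the global intersection of the two weak leaves contains many flow segments, and $\varphi_T(w)$ could sit on a different component than $w'$. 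What closes the gap is the exponential contraction part of Theorem \ref{expan-thm}, built on Lemma \ref{eps0}(b): the hypothesis $d_X(\varphi_t(x),\varphi_{s(t)}(y))\le\delta$ for \emph{all} $t\in[0,T]$ (not just $t=0,T$) forces $|u|\lesssim\eps e^{-T}$, so $|ue^{s(T)}|\lesssim\eps e^{s(T)-T}<e\eps$, placing $\varphi_T(w)$ in the small piece $W^{wu}_{\eps'}(\varphi_{s(T)}(y))$ where Lemma \ref{prodstr} guarantees uniqueness of the orbit segment. You cite Theorem \ref{expan-thm} but invoke only its consequence $|s(T)-T|\le\eps$, which is insufficient; note this is exactly the point where the paper explicitly says ``by Theorem \ref{expan-thm}, $|u_1|\le\eps e^{-T}/3$ implies $|u_2|<\eps$, so \eqref{eq2d} is well-defined.'' With that estimate inserted, and a remark that the resulting equality of $\PSL(2,\R)$-elements modulo $\Gamma$ can be upgraded to an honest equality because all the parameters are below the injectivity radius $\sigma_*$, your proof goes through.
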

\noindent
\begin{proof} Let $\eps\in(0,\sigma_*/6)$ be given and take  $\delta=\delta(\eps/3)$ as in  Theorem \ref{expan-thm} to get
	\begin{equation}
		|s(t)-t|\leq \frac{\eps}{3}\quad\mbox{for all}\quad t\in [0,T].
	\end{equation}
	Write $x=\Gamma g, y=\Gamma h$ for $h,g\in\G=\PSL(2,\R)$ such that $d_X(x,y)=d_\G(h,g)$. 
	According to Lemma \ref{prodstr}, 
	\[\langle x,y\rangle_D =\Gamma g b_{s_1e^{v_1}}=\Gamma hc_{u_1}a_{-v_1}\]
	with $s_1=bd, u_1=-\frac{c}d, v_1=-2\ln d$; here $\big({\scriptsize\begin{array}{cc}a&b\\c&d\end{array}}\big)=:A\in\SL(2,\R)$
	satisfies $g^{-1}h=\pi(A)$. Then
	\begin{equation}\label{eq1d}\varphi_T(\langle x,y\rangle_D)=\Gamma g b_{s_1e^{v_1}}a_{T}=\Gamma g a_T b_{s_1e^{v_1-T}}.
	\end{equation}
	If \[B=\left(\begin{array}{cc}ae^{\frac{s(T)-T}{2}}&be^{-\frac{s(T)+T}{2}}\\ce^{\frac{s(T)+T}{2}}&de^{-\frac{s(T)-T}{2}}\end{array}\right)\in\SL(2,\R), \]
	then $\pi(B)=(g a_T)^{-1}ha_{s(T)}$. This implies that
	\begin{equation}\label{eq2d}\langle \varphi_T(x),\varphi_{s(T)}(y)\rangle_{D'}=\Gamma ga_Tb_{s_2e^{v_2}}=\Gamma ha_{s(T)}c_{u_2}a_{-v_2} 
	\end{equation}
	for
	\begin{eqnarray*}
		s_2=bd e^{-s(T)}=s_1e^{-s(T)},\ u_2=-\frac{c}{d}e^{s(T)}=u_1e^{s(T)},\ v_2=-2\ln (d e^{-\frac{s(T)-T}{2}})=v_1-T+s(T);
	\end{eqnarray*}
	note that by Theorem \ref{expan-thm},  $|u_1|\leq \eps e^{-T}/3$ implies   $|u_2|<\eps$, so \eqref{eq2d} is well-defined.
	This yields $s_2e^{v_2}=s_1e^{v_1-T}$. By comparison  \eqref{eq1d} and \eqref{eq2d}, we obtain \eqref{D=D'},  completing the proof.
\end{proof}
\begin{remark}\rm The previous lemma is also true for $s:[-T,0]\to \R$. The proof is similar. 
	{\hfill$\Diamond$}
\end{remark}

\section{Construction of Markov partitions}\label{Markovsection}
In this section we give a rigorous construction of Markov partitions.  We will use the forms of rectangles and local cross sections in Section \ref{lcssection} to construct a so-called pre-Markov partition, and then  we
follow Bowen's work in \cite{bo-symb} to construct a Markov partition of arbitrarily small size step by step, in that we even could somewhat simplify \cite{bo-symb}. The special forms of rectangles leads to a more explicit and intuitive Markov partition.

First, we introduce the notion of `proper family'.
\begin{definition}[Proper family]\label{pfdf} \rm 
	Let $\alpha>0$ be given and let ${\mathscr T}=\{T_1,\dots,T_n\}$ be a family of closed sets in $X$.  We call $\mathscr T$ a {\em proper family of size} $\alpha$ if
	
	(i) $X=\varphi_{[-\alpha,0]}(\bigcup_{i=1}^nT_i)$;\\
	there is a family of differential local cross sections ${\mathscr D}=\{D_1,\dots,D_n\}$ such that
	
	(ii) $\diam D_i<\alpha$;
	
	(iii) $T_i\subset \int D_i$;
	
	(iv) for $i\ne j$, at least one of the sets
	$D_i\cap \varphi_{[0,\alpha]}(D_j)$ and $D_j\cap \varphi_{[0,\alpha]}(D_i)$
	is empty.
	
\end{definition}
In particular, it follows from (iv) that if $i\ne j$, then $D_i\cap D_j=\varnothing$. 
%	Let ${\mathcal {T}}$ be a proper family of size $\alpha$  small. 
%	From (ii), it follows that for any $x\in \T$, there is $t(x)$ a first positive time
%	$t(x)\leq \alpha$ so that $\varphi_{t(x)}(x)\in \T$.
%	We denote $t(x)$ by $t_x$. 

\begin{definition}[Poincar\'e map]\rm
	Let ${\mathscr T}=\{T_1,\dots,T_n\}$ be a proper family. For any $x\in \T=T_1\cup\dots\cup T_n$, denote by 
	$t(x)$ the first return time, which is the smallest $t>0$ such that $\varphi_{t}(x)\in \T$. 
	The map $\P_{\mathscr T}:  \T \longrightarrow \T$ defined by
	\[\P_{\mathscr T}(x)=\varphi_{t(x)}(x)\]
	is called the {\em Poincar\'e map} with respect to the family ${\mathscr T}$.  
\end{definition}
%To simply the notation, we drop the subscript ${\mathcal T}$ in $\P_{\mathcal T}$ if there is no confusion.
%\input{pm.tpx}
The first return time is also strictly bounded from below by a positive number as follows.
%\begin{lemma}\label{beta}
%	There exists $\beta>0$ so that $t(x)\geq \beta$ for all $x\in {\T}$.
%\end{lemma}
%\begin{proof}
%	Note that $T_1,\dots, T_n$ are closed and $T_i\cap T_j=\varnothing$ for $i\ne j$. 
%	Let \[\beta=\min\{{\rm dist}(T_i,T_j):i\ne j\}>0;\]
%	here ${\rm dist}$ denotes the Hausdorff distance.
%	We claim that $t(x)\geq \beta$ for all $x\in\T$.
%	Suppose on the contrary that there is $x\in T_i$ so that $t(x)<\beta$ and $y=\varphi_{t(x)}(x)\in T_j$
%	for some $i\ne j$. Then 
%	\[d_X(x,y)=d_X(x,\varphi_{t(x)}(x))\leq\frac{1}{\sqrt 2}|t(x)|<\beta,\]
%	which contradicts the definition of $\beta$.
%\end{proof}

\begin{proposition}
	The Poincar\'e map $\P_{\mathscr T}:T_1\cup\dots \cup T_n\to  T_1\cup\dots \cup T_n$ is a bijection. 
\end{proposition}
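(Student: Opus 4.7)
I would prove bijectivity of $\mathcal P_{\mathscr T}$ by handling injectivity and surjectivity separately.

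For injectivity, assume $\mathcal P_{\mathscr T}(x_1)=\mathcal P_{\mathscr T}(x_2)=y$. Both preimages lie on the backward orbit of $y$, so without loss of generality $0<t(x_1)\le t(x_2)$ and $x_1=\varphi_{t(x_2)-t(x_1)}(x_2)$. If the inequality were strict, this would exhibit a return of $x_2$ to $\T$ at positive time $t(x_2)-t(x_1)<t(x_2)$, contradicting the minimality in the definition of $t(x_2)$. Hence $t(x_1)=t(x_2)$ and $x_1=x_2$.

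For surjectivity, fix $y\in\T$ and introduce the backward first return time $\tau(y):=\inf A$, where $A:=\{s>0:\varphi_{-s}(y)\in\T\}$. Setting $x:=\varphi_{-\tau(y)}(y)$, I claim that $x\in\T$ and $t(x)=\tau(y)$, so that $\mathcal P_{\mathscr T}(x)=y$. This requires four short verifications. First, $A\ne\varnothing$: applying condition~(i) of Definition~\ref{pfdf} to $\varphi_{-2\alpha}(y)\in X$ yields $z\in\T$ and $t\in[-\alpha,0]$ with $z=\varphi_{-(2\alpha+t)}(y)$, so $s:=2\alpha+t\in[\alpha,2\alpha]$ lies in $A$. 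Second, $\tau(y)>0$: say $y\in T_j$; since condition~(iv) forces the $D_i$ to be pairwise disjoint closed sets in compact $X$, $d(D_j,D_i)>0$ for $i\ne j$, so by continuity of the flow there is $\beta>0$ with $\varphi_{-t}(y)\notin\bigcup_{i\ne j}D_i$ for $t\in[0,\beta]$; combined with the cross section property of $D_j$, which gives $\varphi_{-t}(y)\notin D_j$ for $t$ in some interval $(0,\nu]$, we obtain $\varphi_{-t}(y)\notin\T$ for $t\in(0,\min(\beta,\nu)]$, whence $\tau(y)>0$. Third, $\tau(y)\in A$: choose $s_n\downarrow\tau(y)$ in $A$; since $\T$ is closed, $\varphi_{-\tau(y)}(y)=\lim_n\varphi_{-s_n}(y)\in\T$. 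Fourth, $t(x)=\tau(y)$: otherwise there would be $t\in(0,\tau(y))$ with $\varphi_t(x)=\varphi_{t-\tau(y)}(y)\in\T$, giving $\tau(y)-t\in A$ with $\tau(y)-t<\tau(y)$, contradicting the definition of $\tau(y)$.

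The main obstacle is the second step, the strict positivity of $\tau(y)$; all other steps are direct unpacking of the definitions of first return time and proper family. That step is where the geometric content of Definition~\ref{pfdf} enters: it uses simultaneously that each $D_i$ is a local cross section of some positive time, that condition~(iv) forces the $D_i$ to be pairwise disjoint (hence at positive distance on compact $X$), and that $y$ belongs to exactly one $T_j$ by condition~(iii). Everything else follows from these local structural facts together with the minimality built into the definition of $t(\cdot)$.
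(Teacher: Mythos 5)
Your injectivity argument is the same as the paper's: both reduce to the observation that distinct return times would produce a strictly smaller positive return time, contradicting minimality. The interesting comparison is in surjectivity. The paper disposes of it in one sentence, saying that surjectivity follows because the flow is ``time reversal invariant,'' and leaves the reader to unpack this. You instead give a complete argument via a backward first-return time $\tau(y)$, and you correctly identify where the nontrivial work is: verifying $\tau(y)>0$. Your use of condition (i) of Definition \ref{pfdf} to show the set $A$ is non-empty, the pairwise disjointness of the closed cross sections $D_i$ in the compact space $X$ (a consequence of condition (iv)) together with the cross-section property of $D_j$ to show $\tau(y)>0$, the closedness of $\T$ to show the infimum is attained, and the minimality of $\tau(y)$ to identify $t(x)=\tau(y)$ --- all of this is correct and is exactly the content hidden behind the paper's ``time reversal'' remark. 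One minor remark: you do not actually need that $y$ lies in a \emph{unique} $T_j$; it suffices that $y\in T_j\subset D_j$ for some $j$ and that $y\notin D_i$ for $i\ne j$ by disjointness, which is what your estimate really uses. Overall your proof is more self-contained than the paper's, at the cost of some length; it buys genuine rigor where the paper relies on a slogan.
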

\begin{proof}
	Take $x,y\in \T=T_1\cup\dots\cup T_n$ such that $\P_{\mathscr T}(x)=\P_{\mathscr T}(y)$ or equivalently $\varphi_{t(x)}(x)=\varphi_{t(y)}(y)$. In order to obtain $x=y$, we must show $t(x)=t(y)$.
	Suppose, in a contrary, that $t(x)\ne t(y)$. If $t(x)>t(y)$ then $0<t(x)-t(y)<t(x)$ 
	and $y=\varphi_{t(x)-t(y)}(x)\in {\mathcal T}$, which contradicts the definition of $t(x)$. 
	The same occurs for $t(x)<t(y)$.  Therefore $t(x)=t(y)$ and we deduce that $\P_{\mathscr T}$ is injective.
	Since $({\varphi_t})_{t\in\R}$ is time reversal invariant, $\P_{\mathscr T}$ is surjective, which completes the proof.
\end{proof}

Note that the first return time map $t$ and the Poincar\'e map $\P_{\mathscr T}$ are not continuous on ${\T}$ but they are continuous on 
\[{\T^*}=\big\{x\in {\T}: \P_{\mathscr T}^k(x)\in \int T_1\cup\dots\cup \int T_n\quad \mbox{for all}\quad k\in\Z \big\}.\]
It does not matter since 
$\T^*$ is dense in $\T$ and $$\varphi_\R(\T^*)=\big\{x\in X:(\varphi_\R(x)\cap \T) \subset \int T_1\cup\dots \cup \int T_n \big\}$$
is dense in $X$.

\begin{definition}[Markov partition] \label{MPdn}\rm 
	A proper family ${\mathscr T}=\{T_1,\dots,T_n\}$ is called a {\em Markov partition} if
	each member in $\mathscr T$ is a rectangle and ${\mathscr T}$ satisfies the Markov property:
	
	$(M_s)$  if $x\in U(T_i,T_j)=\overline{\{x\in {\cal T}^*: x\in \int T_i, \P_{\mathscr T}(x)\in \int T_j\}}$,
	then $W^s(x,T_i)\subset U(T_i,T_j)$;
	
	$(M_u)$ if $x\in V(T_i,T_k)=\overline{\{x\in {\cal T}^*: x\in \int T_i, \P_{\mathscr T}^{-1}(x)\in \int T_k \}}$,
	then $W^u(x,T_i)\subset V(T_i,T_k)$. 
	
\end{definition}

\begin{figure}[h]
	\begin{center}
		\begin{minipage}{1\linewidth}
			\centering
			\includegraphics[angle=0,width=0.7\linewidth]{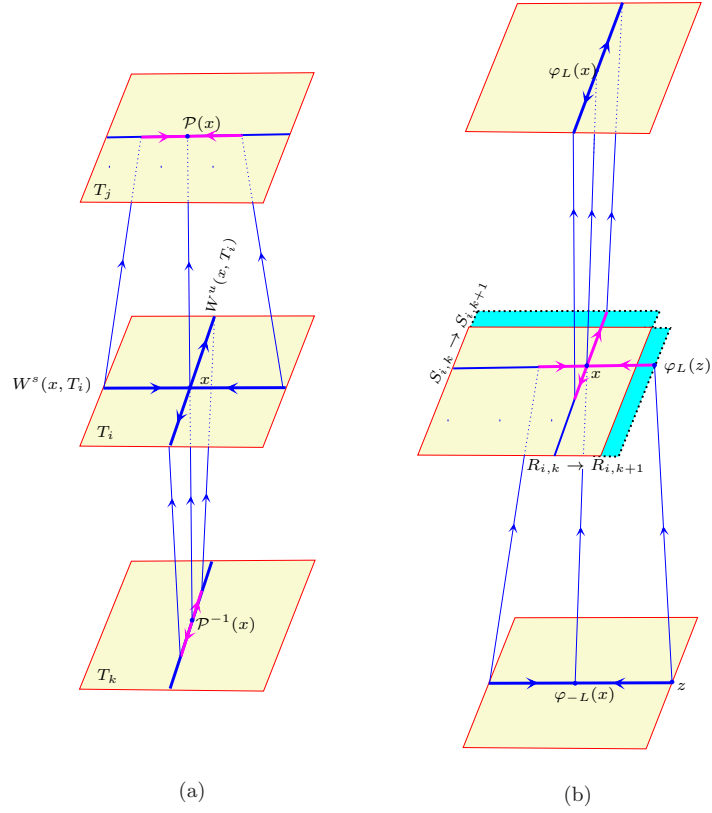}
		\end{minipage}
	\end{center}
	\caption{(a) Markov property
		\quad (b) Enlarge rectangles}\label{ME}
\end{figure} 
\begin{remark}\label{PP-1}\rm   Let  $x\in T_i, \P_{\mathscr T}(x)\in T_j$
	and $z\in W^s(x,T_i)$.  If $\P_{\mathscr T}(x)\in T_j$, then $\P_{\mathscr T}(z)\in W^s(\P_{\mathscr T}(x), T_j)$. For instance, by Lemma \ref{=x=ylm}\,(b) $z=\langle x,z\rangle_{T_i}$. Similarly to Lemma \ref{k=1}, we obtain $\P_{\mathscr T}(z)=\langle \P_{\mathscr T}(x),\P_{\mathscr T}(z)\rangle_{T_j}$, and so $\P_{\mathscr T}(z)\in W^s(\P_{\mathscr T}(x),T_j)$.	
	Analogously, if $y\in T_i$, $z\in W^u(y, T_i)$
	and $\P^{-1}_{\mathscr T}(y),\P_{\mathscr T}^{-1}(z)\in T_k$,
	then  $\P_{\mathscr T}^{-1}(z)\in W^u(\P_{\mathscr T}^{-1}(y),T_k)$; see Figure \ref{ME}\,(a) for an illustration of the Markov property.   
		{\hfill$\Diamond$}
\end{remark}

\begin{proposition}\label{MKallilm} Suppose that   ${\mathscr T}$ is a Markov partition
	and $S_{-N},\dots, S_N\in {\mathscr T}$. Let $x,y\in S_0\cap \T^*$
	and 
	$z=\langle x,y\rangle_{S_0}\in \T^*$. Then
	
	(a) if $\P_{\mathscr T}^i(x)\in S_i$ for $0\leq i\leq N$, then $\P_{\mathscr T}^i(z)\in W^s(\P_{\mathscr T}^i(x),S_i)$ for $0\leq i\leq N$. In particular, 
	$\P_{\mathscr T}^i(z)\in S_i$ for $0\leq i\leq N$;
	
	(b) if $\P_{\mathscr T}^i(y)\in S_i$ for $-N\leq i\leq 0$, then
	$\P_{\mathscr T}^i(z)\in W^u(\P_{\mathscr T}^i(y), S_i)$ for $-N\leq i\leq 0$.
	In particular, 
	$\P_{\mathscr T}^{i}(z)\in S_{-i}$ for $-N\leq i\leq 0$.
\end{proposition}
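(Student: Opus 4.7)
The plan is to prove (a) by induction on $i \in \{0, 1, \dots, N\}$; part (b) will then follow from the mirror argument, iterating $\P_{\mathscr T}^{-1}$ from $i=0$ down to $-N$ and replacing $(M_s)$, $W^s$ by $(M_u)$, $W^u$. For the base case $i=0$, the identity $z = \langle x, y\rangle_{S_0}$ places $z$ in $W^s(x, S_0)$ directly from the definition \eqref{WxT}; the symmetric observation $z \in W^u(y, S_0)$ serves as the base case for (b).

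Assume inductively that $\P^i_{\mathscr T}(z) \in W^s(\P^i_{\mathscr T}(x), S_i)$ for some $0 \le i < N$. Two steps are needed to lift this to level $i+1$: (1) establish $\P^{i+1}_{\mathscr T}(z) \in S_{i+1}$, and (2) sharpen this to $\P^{i+1}_{\mathscr T}(z) \in W^s(\P^{i+1}_{\mathscr T}(x), S_{i+1})$. For (1), since $x \in \T^*$ and the rectangles of $\mathscr T$ are pairwise disjoint (by condition (iv) in Definition \ref{pfdf}), the iterates $\P^i_{\mathscr T}(x)$ and $\P^{i+1}_{\mathscr T}(x)$ must in fact lie in $\int S_i$ and $\int S_{i+1}$ respectively; hence $\P^i_{\mathscr T}(x) \in U(S_i, S_{i+1})$. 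The Markov property $(M_s)$ of Definition \ref{MPdn} then yields
\[ \P^i_{\mathscr T}(z) \in W^s(\P^i_{\mathscr T}(x), S_i) \subset U(S_i, S_{i+1}). \]
Now $z \in \T^*$ implies $\P^i_{\mathscr T}(z) \in \T^*$, so $\P_{\mathscr T}$ is continuous at $\P^i_{\mathscr T}(z)$; approximating by a sequence $\zeta_n \in \T^* \cap \int S_i$ with $\P_{\mathscr T}(\zeta_n) \in \int S_{i+1}$ and passing to the limit in the closed set $S_{i+1}$ produces $\P^{i+1}_{\mathscr T}(z) \in S_{i+1}$.

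For step (2), I would invoke Remark \ref{PP-1} applied to $\P^i_{\mathscr T}(x) \in S_i$, $\P^{i+1}_{\mathscr T}(x) \in S_{i+1}$, and $\P^i_{\mathscr T}(z) \in W^s(\P^i_{\mathscr T}(x), S_i)$, together with the membership $\P^{i+1}_{\mathscr T}(z) \in S_{i+1}$ obtained in (1); the remark then delivers $\P^{i+1}_{\mathscr T}(z) \in W^s(\P^{i+1}_{\mathscr T}(x), S_{i+1})$, completing the inductive step. The ``in particular'' clause is immediate from $W^s(\cdot, S_i) \subset S_i$.

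I expect the main obstacle to be precisely step (1): the Markov property $(M_s)$ only places $\P^i_{\mathscr T}(z)$ in the abstract closure $U(S_i, S_{i+1})$ rather than in $S_{i+1}$ directly. The hypothesis $z \in \T^*$ is what rescues the argument, because it guarantees continuity of $\P_{\mathscr T}$ at each iterate $\P^i_{\mathscr T}(z)$ so that the closure can be unwound through an interior approximating sequence; without this, $\P^{i+1}_{\mathscr T}(z)$ could in principle land on a rectangle boundary where $\P_{\mathscr T}$ is discontinuous and the desired containment could fail.
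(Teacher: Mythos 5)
Your proof is correct and follows essentially the same route as the paper's: induction on $i$, using $(M_s)$ to place $\P_{\mathscr T}^i(z)$ in $U(S_i,S_{i+1})$, the hypothesis $z\in\T^*$ to deduce $\P_{\mathscr T}^{i+1}(z)\in S_{i+1}$, and Remark \ref{PP-1} to upgrade this to membership in $W^s(\P_{\mathscr T}^{i+1}(x),S_{i+1})$. You have spelled out in more detail than the paper does why $\P^i_{\mathscr T}(x)\in U(S_i,S_{i+1})$ and why the approximating-sequence argument yields the containment, which is a useful elaboration but not a different method.
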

\begin{proof} (a) By the assumption, it follows that  $\P_{\mathscr T}^i(x)\in U(S_i,S_{i+1})$
	for $0\leq i\leq N-1$. We prove by induction. For $i=1$,
	$z=\langle x,y\rangle_{S_0}\in W^s(x,S_0)$. 
	By property $(M_s)$,  $z\in U(S_0, S_1)$. 
	Due to $z\in \T^*$, $\P_{\mathscr T}(z)\in S_1$. Since $z\in W^s(x,S_0)$,
	it follows from Remark \ref{PP-1} that $\P_{\mathscr T}(z)\in W^s(\P_{\mathscr T}(x), S_1)$,
	so the statement holds for $i=1$. 
	Assume that $\P_{\mathscr T}^i(z)\in W^s(\P_{\mathscr T}^i(x), S_i)$ for $1\leq i\leq N-1$.
	Since $\P_{\mathscr T}^i(x)\in U(S_i,S_{i+1})$ and $P_{\mathscr T}^i(z)\in W^s(\P_{\mathscr T}^i(x),S_i)$,
	it follows that  $\P_{\mathscr T}^i(z)\in U(S_i,S_{i+1})$, and hence $\P_{\mathscr T}^{i+1}(z)\in S_{i+1}$, due to $z\in \T^*$.
	This yields $\P_{\mathscr T}^{i+1}(z)\in W^s(\P_{\mathscr T}^{i+1}(x), S_{i+1})$
	by Remark \ref{PP-1}
	and the conclusion is obtained.
	
	(b) Here the argument is analogous.
\end{proof}

\begin{remark} \rm In geometric meaning, Proposition \ref{MKallilm} says that, if the future orbit $\{\varphi_t(x),t\geq 0\}$ of
	$x\in \int S_{0}$ passes through $\int S_{1}, i=1,2,3,\dots$ (in sequence)
	and the past orbit $\{\varphi_t(y), t< 0\}$ of $y\in\int S_{0}$ passes through $\int S_{i}, i=-1,-2,\dots$ (in sequence)
	then the orbit of $\langle x,y\rangle_{S_{0}}\in\int S_{0}$ has both properties; see Figure \ref{pm} for an illustration.  This property is used as the definition of Markov partitions in \cite{Po_s}.
	{\hfill$\Diamond$}
\end{remark}

\begin{figure}[ht]
	\begin{center}
		\begin{minipage}{1\linewidth}
			\centering
			\includegraphics[angle=0,width=1\linewidth]{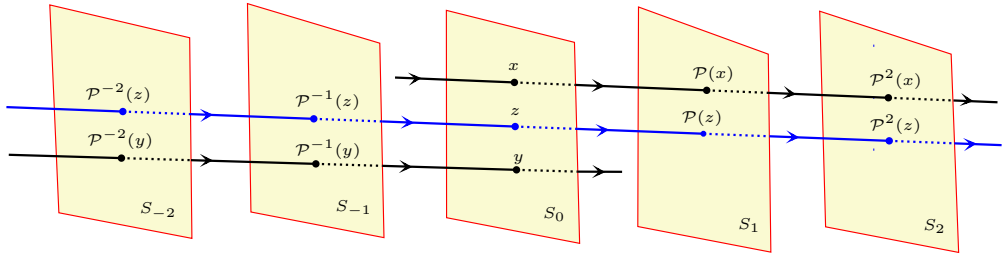}
		\end{minipage}
	\end{center}
	\caption{Markov property: $z=\langle x,y\rangle_{T_{x_0}}$ has properties of both $x$ and $y$}\label{pm}
\end{figure}

In the rest of this paper we prove the following main result.

\begin{theorem}\label{mthm}
	The flow $(\varphi_t)_{t\in\R}$ has a Markov partition of arbitrary small size.
\end{theorem}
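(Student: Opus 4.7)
The plan is to follow Bowen's strategy in \cite{bo-symb}, adapted to this concrete setting using the explicit rectangles from Section~\ref{lcssection}. First I would construct a \emph{pre-Markov partition}: a proper family $\mathscr{T}_0 = \{T_1,\ldots,T_n\}$ (in the sense of Definition~\ref{pfdf}) whose members are rectangles of the form $T_i = S_{\eps_i}(z_i)$ sitting inside local cross sections $D_i = P_{\rho_i}(z_i)$. Given a target size $\beta > 0$, by compactness of $X$ and Proposition~\ref{lmnu}, finitely many such flow boxes $\varphi_{(-\alpha,0)}(\int T_i)$ cover $X$ for $\alpha$, $\eps_i$, $\rho_i$ all much smaller than $\beta$; the disjointness condition~(iv) of Definition~\ref{pfdf} is arranged by slightly perturbing the centers $z_i$ along transverse directions and shrinking the $\eps_i$ so that no two $D_i$ intersect a common flow arc of length $\alpha$.

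Next, let $\P_0 := \P_{\mathscr{T}_0}$ denote the associated Poincar\'e map; in general it fails the Markov property. The core step is to refine each $T_i$ into smaller rectangles that do satisfy it. Following Bowen, for every pair $(i,j)$ admitting some $x\in T_i$ with $\P_0(x)\in T_j$, I would cut $T_i$ using the stable-type slices $\P_0^{-1}\bigl(W^s(y,T_j)\bigr)$ and, symmetrically, using the unstable-type slices $\P_0\bigl(W^u(y,T_k)\bigr)$ inherited from the neighbors. By Lemma~\ref{DD'}---which says the local product bracket commutes with the Poincar\'e map up to reparametrization---together with Remark~\ref{recrm}(a), each resulting intersection is again a rectangle. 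Since only finitely many pairs $(i,j)$ contribute, the procedure yields a finite family $\mathscr{T} = \{T_{i,\gamma}\}$ of rectangles of diameter at most $\beta$.

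Finally, I would verify conditions $(M_s)$ and $(M_u)$ of Definition~\ref{MPdn}. For $x\in T_{i,\gamma}$ with $\P_{\mathscr{T}}(x)\in T_{j,\gamma'}$, the crucial point is that $\P_{\mathscr{T}}$ carries the stable slice $W^s(x,T_{i,\gamma})$ entirely into a single stable slice of $T_{j,\gamma'}$: this follows from \eqref{D=D'} in Lemma~\ref{DD'}, together with the explicit description of stable slices in Proposition~\ref{WSlm}, which shows that the stable-type cuts made in the refinement are precisely what is needed to align the images. Consequently $W^s(x,T_{i,\gamma}) \subset U(T_{i,\gamma},T_{j,\gamma'})$, giving $(M_s)$; $(M_u)$ is symmetric. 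The main obstacle will be showing that a single round of cutting really suffices---a priori one might worry that new cuts become necessary after the first refinement, forcing iteration whose termination has to be controlled via expansivity (Theorem~\ref{expan-thm}) and via careful bookkeeping of how boundary pieces overlap. Here the concrete coordinates on $S_\eps(z)$ and $T_\eps(z)$ from Propositions~\ref{recRthm} and~\ref{WSlm} pay off: stable and unstable slices admit closed-form descriptions, so the Poincar\'e image of such a slice can be tracked directly via matrix computations as in Section~\ref{lcssection}, and one can verify by hand that it lands on a union of refined slices rather than cutting them transversally.
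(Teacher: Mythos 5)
There is a genuine gap, and you in fact flag it yourself: you write that ``one can verify by hand that [the image of a stable slice] lands on a union of refined slices rather than cutting them transversally'' after a single round of cutting. That is precisely what fails in general and is the central difficulty of Markov partition constructions. If you cut $T_i$ along $\P_0^{-1}(W^s(y,T_j))$, the new boundary segments you introduce produce new preimages that cross the interiors of already-cut pieces, so the refinement has to be iterated; a single pass does not terminate. Having closed-form coordinates for $S_\eps(z)$, $T_\eps(z)$ and their stable/unstable slices (Propositions \ref{recRthm}, \ref{WSlm}) does nothing by itself to make the cuts close up, since the issue is combinatorial (infinitely many crossings), not a failure of explicit formulas.

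The paper avoids exactly this pitfall with two ideas you do not use. First, before any subdivision, the small initial rectangles $K_i$ are \emph{enlarged} to rectangles $C_i=\langle S_i,R_i\rangle_{D_i}$ where $R_i=\bigcup_k R_{i,k}$ and $S_i=\bigcup_k S_{i,k}$ are built by an explicit recursion (Lemma \ref{Riklm}); the geometric series $\eps_{k+1}=\eps_0+2\eps_ke^{-T}$ keeps $C_i\subset B_i$, and the point of the enlargement is the closure property in Lemma \ref{CiCj}: $\pr_{D_i}\varphi_L W^s(\pr_{D_k}\varphi_{-L}(x),C_k)\subset W^s(x,C_i)$, and dually for $W^u$. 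This is the replacement for ``infinitely many rounds of cutting.'' Second, the actual subdivision is not by preimages of slices but by projections: each $C_j$ is partitioned into the four rectangles $E_{ji}^1,\dots,E_{ji}^4$ for each $i\in I_j$ (Lemma \ref{Eji1-4}), the family $\mathfrak C_j$ is formed by intersections, and one takes equivalence classes $G_p$ of points whose first $N$ returns pass through the same members of $\mathfrak C_{j_k}$. The Markov property (Lemma \ref{flm2}) is then proved for $N>L/(2\alpha)$ using Lemma \ref{CiCj} together with the incompatibility conditions (c) and (e) of Theorem \ref{step1}; and the finished rectangles $M_p=\varphi_{\tau_p}(\overline G_p)$ are slid by distinct small times to make them pairwise disjoint. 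None of these steps --- the $C_i$ enlargement, the key invariance Lemma \ref{CiCj}, the $E^a_{ji}$ partition, the $N$-step itinerary equivalence classes, or the final sliding --- appears in your proposal, and without something playing their role the argument does not close.
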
 

The construction of Markov partitions can be summarized as follows.

\begin{itemize}
	\item For arbitrarily small $\alpha>0$, construct a proper family of size $\alpha$ consisting of rectangles $B_1,\dots,B_n$ rectangles, which contain rectangles $K_1,\dots, K_n$  
	with certain properties (Theorem \ref{step1}).
	\item Enlarge rectangles $K_1,\dots, K_n$ to $C_1,\dots, C_n$ satisfying
	Lemma \ref{CiCj}.
	\item Decompose $C_i$ into smaller sets $E_{ji}^1,E_{ji}^2, E_{ji}^3, E_{ji}^4$ in Lemma \ref{Eji1-4} and define  family ${\mathfrak C}_i$ of sets in $C_i$; see Lemma \ref{frankC}.
	
	\item Construct equivalence classes of
	elements in $C_i$ whose orbits visit the same member of ${\mathfrak C}_1,
	\dots {\mathfrak C}_n$ in the same order for sufficiently large times. 
	\item Prove that after sliding appropriately small times, these equivalence classes are a Markov partition; see lemmas \ref{reclmG} and \ref{flm2}.  
\end{itemize}

Fix $\eps\in (0,\sigma_*/5)$ and define $\delta_1=\delta(\eps)$
from Corollary \ref{lpslm}, and $\delta_2=\delta(\eps)$ as in Lemma \ref{DD'}. 
We define $\delta=\min\{\delta_1/4,\delta_2/4, \sigma_*/6 \}$
and consider $\alpha\in (0,\delta)$.

First, we construct a so-called pre-Markov partition, 
which is stated in \cite{bo-symb} without a proof. 
A similar assertion can be found in \cite{Po87}.

\begin{theorem}\label{step1}
	There are a family of differentiable local cross sections ${\mathscr D}=\{D_1,\dots,D_n\}$ and 
	two families of  rectangles $\mathscr K=\{ K_1,\dots, K_n\}, \mathscr B=\{B_1,\dots,B_n\}$ satisfying
	\begin{enumerate}
		\item[(a)] $K_i\subset \int B_i, B_i\subset \int D_i, i=1,\dots,n$;
		\item [(b)]$ \diam D_i<\alpha, i=1,\dots,n$; 
		\item [(c)] for $i\ne j$, at least one of the sets $D_i\bigcap \varphi_{[0,2\alpha]}(D_j)$ and  $D_j\bigcap \varphi_{[0,2\alpha]}(D_i)$ is empty;
		\item [(d)] $X= \varphi_{[-\alpha,0]}(\bigcup_{i=1}^n\int K_i)= \varphi_{[-\alpha,0]}(\bigcup_{i=1}^n\int B_i)$; %, where $\int B_i$ is the interior of $B_i$ as a subset of metric space $D_i$;
		\item [(e)] if $B_i\bigcap \varphi_{[-\alpha,\alpha]}(B_j)\ne\varnothing$,
		then $B_i\subset \varphi_{[-2\alpha,2\alpha]}(D_j)$.
	\end{enumerate}
\end{theorem}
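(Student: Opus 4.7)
The strategy is to place small nested rectangles at every point of $X$ using the explicit forms from Section \ref{lcssection}, extract a finite subcover by compactness, and then verify the five listed conditions, with particular care for the one-way condition (c). Fix $\alpha\in(0,\delta)$. For each $z\in X$ pick $\eps_z\in(0,\sigma_*/6)$ small enough that $D_z:=P_{\eps_z}(z)$ is a local cross section of time $\alpha$ with $\diam D_z<\alpha$ (Lemma \ref{lcexpl}) and so that $\tfrac{\eps_z}{1-\eps_z^2}<\delta_*/4$; put
\[
K_z:=S_{\eps_z/4}(z)\qquad\text{and}\qquad B_z:=S_{\eps_z/2}(z).
\]
Proposition \ref{recRthm} makes $K_z$ and $B_z$ rectangles, and Remark \ref{rmrec} delivers the nestings $K_z\subset\int B_z$ and $B_z\subset\int D_z$ (all interiors taken inside the cross section $D_z$).

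Because every $x\in X$ satisfies $x\in\varphi_{(-\alpha,0)}(\int K_z)$ for the choice $z=\varphi_{\alpha/2}(x)$, the sets $\{\varphi_{(-\alpha,0)}(\int K_z)\}_{z\in X}$ form an open cover of the compact space $X$; extract a finite subcover at points $x_1,\dots,x_n$ and set $K_i:=K_{x_i}$, $B_i:=B_{x_i}$, $D_i:=D_{x_i}$. Properties (a), (b), and (d) are now immediate from the construction. For (e), suppose $p=\varphi_t(q)\in B_i$ with $q\in B_j$ and $|t|\le\alpha$; every $p'\in B_i$ is at distance $<\diam B_i<\alpha$ from $p$, so using the coordinatization of $D_j$ from Subsection \ref{Poincsec} together with the transversality of the flow, the orbit of $p'$ meets $D_j$ at some time $t'$ with $|t'|\le|t|+\alpha<2\alpha$, proving $p'\in\varphi_{[-2\alpha,2\alpha]}(D_j)$.

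The principal obstacle is the one-way condition (c), which is not automatic from the pointwise construction: for two sections whose orbits interlace within a short flow time, both $D_i\cap\varphi_{[0,2\alpha]}(D_j)$ and $D_j\cap\varphi_{[0,2\alpha]}(D_i)$ may simultaneously be nonempty. To enforce (c) I would proceed by a finite correction step. The conflicting pairs form a finite symmetric set, and for each conflict one of the two sections is replaced by a small flow-shift $\varphi_{\tau}(D_j)$ together with the corresponding shifts of $K_j,B_j$. The corollary immediately following Proposition \ref{recRthm} guarantees that a sufficiently small flow-shift of $S_{\eps_z/4}(z)$, $S_{\eps_z/2}(z)$, $P_{\eps_z}(z)$ remains a nested triple of rectangles of the same explicit form, so the structural properties are preserved. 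A generic, sufficiently small $\tau>0$ pushes the witnessing forward flow-time for one of the two intersections outside the window $[0,2\alpha]$, resolving that conflict; a single initial shrinking of $\alpha$ absorbs the accumulated shifts into the covering interval of (d) without spoiling (b) or (e). The delicate point of the argument is precisely this balancing of the ordering condition (c) against the covering condition (d) within the single small parameter $\alpha$.
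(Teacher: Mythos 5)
Your proposal starts on the same footing as the paper (finitely many explicit Poincar\'e sections $P_{\eps_z}(z)$ with nested explicit rectangles $S_{\eps_z/4}(z)\subset S_{\eps_z/2}(z)$, compactness, verify (a)--(e)), and you correctly identify condition (c) as the only real obstruction. But the ``finite correction step'' you invoke to enforce (c) is where the argument breaks down, and this is the heart of the theorem.

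The issue is quantitative. Suppose $D_i\cap\varphi_{t_1}(D_j)\ne\varnothing$ and $D_j\cap\varphi_{t_2}(D_i)\ne\varnothing$ with $t_1,t_2$ strictly in the \emph{interior} of $(0,2\alpha)$ --- which is the generic situation when sections are placed independently at points $x_1,\dots,x_n$ without coordinating flow times. Replacing $D_j$ by $\varphi_\tau(D_j)$ only slides the offending intersection times to roughly $t_1+\tau$ and $t_2-\tau$; for these to leave the window $[0,2\alpha]$ one needs $|\tau|$ comparable to $\min\{t_1,\,2\alpha-t_1,\,t_2,\,2\alpha-t_2\}$, which is not small, contradicting your requirement that $\tau$ be ``sufficiently small.'' Your claim that ``a generic, sufficiently small $\tau>0$ pushes the witnessing forward flow-time outside the window'' is therefore false in general. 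Moreover, the patching process is not shown to terminate: shifting $D_j$ to fix its conflict with $D_i$ can create new conflicts between $\varphi_\tau(D_j)$ and some other $D_k$, so the ``finite symmetric set of conflicts'' is not a static target. Finally, the ``single initial shrinking of $\alpha$'' is circular: shrinking $\alpha$ tightens (b)--(e) simultaneously, and after shifting $K_j$ by $\tau_j$ the covering in (d) slides to $\varphi_{[-\alpha+\tau_j,\,\tau_j]}(\int K_j)$, which may stick out past $0$; nothing you have written controls this.

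The paper's proof avoids the problem by never creating the conflict in the first place: it is a \emph{recursive} construction in the style of Bowen--Walters \cite{bw}. At step $k$, for each $y\in P_{\alpha/2}(x_k)$ one observes that $\varphi_{[-2\alpha,2\alpha]}(y)$ meets the union $\mathtt D_{k-1}$ of all previously placed sections in only finitely many points (because each of those is a local cross section of time $2\alpha$), so by continuity there are an open interval $I_y\subset(-2\alpha,2\alpha)$ and a radius $r_y$ with $\varphi_{I_y}(P_{r_y}(y))\cap\mathtt D_{k-1}=\varnothing$. The new sections are then $\varphi_{u_j}(P_{r_{y_j}}(y_j))$ at \emph{pairwise distinct} times $u_j\in I_{y_j}$, and crucially all the new sections at a given step are flow-shifts of subsets of the single section $P_\alpha(x_k)$. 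That is what makes (c) checkable: for two sections at the same step the one-way condition reduces to comparing the shift times $v_i$ and $v_j$, and for a new section versus an old one the choice of $I_{z_i}$ guarantees disjointness within the relevant flow window. This structural coordination --- placing each batch of sections by flow-shift within intervals that dodge the existing array --- is precisely what your ``build first, patch afterward'' strategy does not supply, and I do not see how to recover it by a finite sequence of small generic shifts.

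A secondary, more easily repairable point: your verification of (e) is informal (``using the coordinatization \dots together with transversality'') and your radius ratio $\diam D_z / \diam B_z = 2$ may be too tight; the paper uses $D_i=P_{4\eps}(z_i)$ against $B_i=S_\eps(z_i)$, i.e.\ ratio $4$, and then proves (e) by an explicit matrix computation bounding $|\bar t|\le 2\alpha$, $|\bar s|,|\bar u|<4\eps$. If you keep your approach you should carry out the analogous estimate and check the buffer is sufficient.
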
 

%To simply the notation, from now on we drop the subscript $D_i$ in $\int_{D_i}(B_i)$ and in similar cases if there is no confusion.

In comparison with the statement in \cite{bo-symb},
there is a slightly difference of the flow times and the presence of $K_1,\dots,K_n$.  Later in our construction, we will enlarge $K_1,\dots, K_n$ to $C_1,\dots, C_n$, which are still included in $B_1,\dots,B_n$, and conditions (c), (e) will be crucial in proving the Markov property.

\begin{proof} The idea of this proof is carefully modified from that of \cite[Lemma 7]{bw}.

	Note that due to $0<\alpha<\sigma_*/6$,  any Poincar\'e section
	of radius at most  $\lambda$ is a local cross section of time $2\alpha$; see Lemma \ref{lcexpl}. 
	Since $X$ is compact, there are  $x_1,\dots,x_m\in X$ pairwise disjoint such that
	\begin{equation}\label{epsX} X=\varphi_{[-\alpha,0]}(\int S_{\alpha/16}(x_1))\cup\bigcup_{k=2}^m\varphi_{[-\alpha,0]}(\int  P_{\alpha/2}({x_k})).
	\end{equation} 
	
	\noindent
	\underline{Step 1:} First, we  construct ${\cal D}$ and ${\cal K}$  recursively. 
	Set  ${\cal D}_1=\{P_{\alpha/4}(x_1)\}$ and ${\cal K}_1=\{S_{\alpha/16}(x_1) \}$. %and ${\cal B}_1=\{{S_{\alpha/16}(x_1)}\}$.
	For each $y\in P_{\alpha/2}(x_2)$, the set $\varphi_{[-2\alpha,2\alpha]}(y)\cap P_{\alpha/2}(x_1)$
	is either one single point or empty, due to the fact that $P_{\alpha/2}(x_1)$ is a local cross section of time at least $2\alpha$. This yields that there is $t_y\in (-2\alpha,2\alpha)$ 
	such that $\varphi_{t_y}(y)\notin P_{\alpha}(x_1)$. Since  $X\setminus P_{\alpha/2}(x_1)$  is an open,
	using the continuity of the flow $\varphi: \R\times X\to X$, there are 
	an open interval $I_y\subset (-2\alpha,2\alpha)$ and an open neighbourhood
	$V_y\subset P_\alpha(x_2)$ of $y$ so that $\varphi_{I_y}(V_y)\subset 
	X\setminus P_{\alpha/2}(x_1)$, or equivalently,  $\varphi_{I_y}(V_y)\cap P_{\alpha/2}(x_1)=\varnothing$.
	Take $0<r_y<\alpha/4$ so small  that $P_{r_y}(y)\subset V_y$ to have
	\begin{equation*}\label{dk1} \varphi_{I_y} (P_{r_y}(y))\cap P_{\alpha/2}(x_1)=\varnothing.
	\end{equation*} 
	Due to the fact that $P_{\alpha/2}(x_2)$ is compact, there are $y_1,\dots,y_{n_2}\in P_{\alpha/2}(x_2)$ distinct such that
	$P_{r_{y_i}}(y_i)\subset P_\alpha(x_2)$ and
	\begin{equation*}\label{dk2}P_{\alpha/2}(x_2)\subset \bigcup_{i=1}^{n_2} \int S_{r_{y_i}/8}(y_i).
	\end{equation*} 
	Pick distinct numbers $u_1\in I_{y_1},\dots, u_{n_2}\in I_{y_{n_2}}$
	and set 
	\begin{align*}
		{\cal D}_2&={\cal D}_1\cup \{\varphi_{u_{1}}(P_{r_{y_1}}(y_{1})),\dots,\varphi_{u_{n_2}}(P_{r_{y_{n_2}}}(y_{n_2})) \},
		\\
		{\cal K}_2&={\cal K}_1\cup \{\varphi_{u_1}(S_{{r_{y_1}}/8}(y_{1})),\dots,\varphi_{u_{n_2}}(S_{r_{y_{n_2}}/8}(y_{n_2})) \}. 
		%{\cal B}_2&={\cal B}_1\cup \{\varphi_{u_1}(S_{{r_{y_1}}/3}(y_{1})),\dots,\varphi_{u_{n_2}}(S_{r_{y_{n_2}}/3}(y_{n_2})) \}.
	\end{align*} 
	Owing to that $u_1,\dots, u_{n_2}$ are distinct, we see that Poincar\'e sections in ${\cal D}_2$ are pairwise disjoint  satisfy Condition (c).
	Suppose that ${\cal D}_3,\dots,{\cal D}_{k-1}$, ${\cal K}_3,\dots,{\cal K}_{k-1}$ %  and ${\cal B}_3,\dots,{\cal B}_{k-1}$
	are similarly constructed
	for $k\leq m$ and all Poincar\'e sections in ${\cal D}_{k-1}$ satisfy Condition (c). We are going to construct ${\cal D}_k$ and ${\cal K}_k$. %, and ${\cal B}_k$. 
	Analogously to the construction of ${\cal D}_2$, for every
	$z\in P_{\alpha/2}(x_k)$, the set
	\[\varphi_{[-2\alpha,2\alpha]}(z)\cap {\tt D}_{k-1}\]
	is a set of finite points since ${\cal D}_{k-1}$ consists of finitely many local cross sections
	of times at least $2\alpha$; here  ${\tt D}_{k-1}$ denotes the union of elements in ${\cal D}_{k-1}$. Using the continuity of the flow, there exist an open interval $I_z\subset (-2\alpha,2\alpha)$
	and $0<r_z<\alpha/4$ such that $\varphi_{I_z}(P_{r_z}(z))\cap {\tt D}_{k-1}=\varnothing$.
	We cover $P_{\alpha/2}(x_k)$ by smaller rectangles $S_{r_{z_i}/8}(z_i)\subset P_{r_{z_i}}(z_i)\subset  P_{\alpha}(x_k)$:
	\begin{equation}\label{Rxk}
		P_{\alpha/2}(x_k)\subset \bigcup_{i=1}^{n_k} \int S_{r_{z_i}/8}(z_i),
	\end{equation} 
	where $z_i\in P_{\alpha/2}(x_k)$. 
	Pick distinct numbers $u_{1}\in I_{z_1},\dots, u_{n_k}\in I_{z_{n_k}}$
	and let  
	\begin{align*}
		{\cal D}_k&={\cal D}_{k-1}\cup\{\varphi_{v_1}(P_{r_{z_1}}(z_{1})),\dots,\varphi_{v_{n_k}}(P_{r_{z_{n_k}}}(z_{n_k})) \},
		\\
		{\cal K}_k&={\cal K}_{k-1}\cup \{\varphi_{v_1}(S_{r_{z_1}/8}(z_{1})),\dots,\varphi_{v_{n_k}}(S_{r_{z_{n_k}}/8}(z_{n_k})) \}.
		%\\
		%{\cal B}_k&={\cal B}_{k-1}\cup \{\varphi_{v_1}(S_{r_{z_1}/3}(z_{1})),\dots,\varphi_{v_{n_k}}(S_{r_{z_{n_k}}/3}(z_{n_k})) \}.
	\end{align*} 
	Due to the radii of elements in ${\cal D}_k$ is at most $\alpha/4$,
	their radii are at most $\alpha$ and hence ${\cal D}_k$ satisfies Condition (b).  Next we check that the elements in ${\cal D}_k$ satisfy Condition (c). 
	Suppose that $\varphi_{[-2\alpha,2\alpha]}(P_i)\cap P\ne \varnothing$ with $P\in {\cal D}_{k-1}$ and $P_i=\varphi_{v_i}(P_{r_{z_i}}(z_i))$ for some $i$. If $v_i\geq 0$, then $\varphi_{[0,2\alpha]}(P)\cap P_i=\varnothing$ and
	if  $v_i< 0$ then $\varphi_{[0,2\alpha]}(P_i)\cap P=\varnothing$.
	Let $P_i=\varphi_{v_i}(P_{r_{z_i}}(z_{i})),P_j=\varphi_{v_j}(P_{r_{z_j}}(z_j)),i\ne j$.
	If  $v_i>v_j$ then we observe that $\varphi_{[0,2\alpha]}(P_i)\cap P_j=\varnothing$.
	For, suppose on the contrary that there is $w=\varphi_t(u)\in P_j$ for $t\in [0,2\alpha]$ and $u\in P_i$.
	Then $w=\varphi_{v_j}(w')$ and $u=\varphi_{v_i}(u')$ for $u'\in P_{r_{z_i}}(z_i)\subset P_\alpha(x_k),w'\in P_{r_{z_j}}(z_j) \subset P_\alpha(x_k)$
	imply that $w=\varphi_{t+v_i}(u')=\varphi_{v_j}(w')$. 
	Since $P_\alpha(x_k)$ is a local cross section, we have $u'=w'$ and hence $t+v_i=v_j$
	or $t=v_j-v_i<0$, contradicting  $t\geq 0$. Similarly, if $v_i<v_j$, 
	then $P_i\cap \varphi_{[0,2\alpha]}(P_j)=\varnothing$. 
	We have shown that if $P, Q\in {\cal D}_{k}$ and $P\ne Q$, then at least one of the sets
	$\varphi_{[0,2\alpha]}(P)\cap Q$ and $\varphi_{[0,2\alpha]}(Q)\cap P$ is empty. Therefore, ${\cal D}_k$ satisfies Condition (c). 
	
	Repeating  this process, we obtain 
	\begin{eqnarray*}{\cal D}_m&=&{\cal D}_{m-1}\cup \{\varphi_{p_1}(P_{r_{w_1}}(w_{1})),\dots,\varphi_{p_{n_m}}(P_{r_{w_{n_m}}}(w_{n_m}))\},
		\\
		{\cal K}_m&=&{\cal K}_{m-1}\cup\{\varphi_{p_{1}}(S_{r_{w_1}/8}(w_{1})),\dots,\varphi_{p_{n_m}}(S_{r_{w_{n_m}}/8}(w_{n_m})) \},
		%\\
		%	{\cal B}_m&=&{\cal B}_{m-1}\cup\{\varphi_{p_{1}}(S_{r_{w_1}/3}(w_{1})),\dots,\varphi_{p_{n_m}}(S_{r_{w_{n_m}}/3}(w_{n_m})) \},
	\end{eqnarray*} 
	where $p_1\in I_{w_1},\dots, p_{n_m}\in I_{w_{n_m}}$ are pairwise distinct, $I_{w_1},\dots, I_{w_{n_m}}\subset (-2\alpha,2\alpha)$
	and $0<r_{w_i}<\alpha/4$ such that 
	$$P_{\alpha/2}(x_m)\subset \bigcup_{i=1}^{n_m}\int S_{r_{w_i}/8}(w_i)\quad\mbox{and}\quad \varphi_{I_{w_j}}(P_{r_{w_j}}(w_j))\cap {\tt D}_{m-1}=\varnothing,$$
	where ${\tt D}_{m-1}$ denotes the union of sets in ${\cal D}_{m-1}$, 
	$S_{r_{w_i}/8}(w_i)\subset P_{r_{w_i}}(w_i)\subset P_{\alpha}(x_k)$. 
	
	Let $n={\rm card\,}{\cal D}_m$ and denote the elements in ${\cal D}_m$ and ${\cal K}_m$ 
	by $D_1,\dots, D_n$, and
	$K_1,\dots,K_n$, respectively.  
	In summary, we have constructed a family of cross sections $D_1\dots, D_n$ satisfying conditions (b) and (c). 
	In addition, due to $\varphi_t(\Gamma g c_ub_s)=\Gamma g a_{t}c_{ue^t}b_{se^{-t}}$ for $g\in\PSL(2,\R), t,u,s\in\R$, 
	by correcting the radii of Poincar\'e sections $D_i$ and rectangles $K_i$, we may assume that
	\[ D_i=P_{4\eps}(z_i)\quad\mbox{and}\quad K_i=S_{\eps/2}(z_i)\]
	for $z_i\in X$ and some $\eps\in (0,\alpha/16)$.
	Then $D_i=P_{4\eps}(z_i)\subset P_{\alpha/4}(z_i)$, so  (b) holds by Lemma \ref{lcexpl}. 
	Now, for each $i\in\{ 1,\dots, n\}$,  define \[ B_i=S_{\eps}(z_i).\]
	to obtain (a). 
	
	\medskip 
	\noindent \underline{Step 2:} Proof of (d). 
	Due to \eqref{epsX}, for any $x\in X$,  either $x\in \varphi_{[-\alpha,0]}(S_{\alpha/16}(x_1))$ or 
	$x\in \varphi_{[-\alpha,0]}(\int P_{\alpha/2}(x_k))$ for some  $k\in\{2,\dots,m\}$.
	Then \eqref{Rxk} implies that   $x\in \varphi_{[-\alpha,0]}(\int S_{r_{z_i}/8}(z_i))$
	for some $i\in \{ 1,\dots, r_{n_k}\}$.  This means that $x\in \varphi_{[-\alpha,0]}(\int K_s)$ for some $s\in\{1,\dots,n\}$, and the former of (d) is proved. This yields the latter of (d). 
	
	\medskip 
	\noindent 
	\underline{Step 3:} Proof of (e). Write $z_i=\Gamma g_i$ for $g_i\in \PSL(2,\R)$.
	Suppose that $x=\varphi_t(y)$ for $t\in [-\alpha,\alpha]$, $x\in B_i$ and $y\in B_j$. 
	We need to check  that $x\in \varphi_{[-2\alpha,2\alpha]}(D_j)$.
	Recall that for $k\in \{1,\dots,n\}$, 
	\[B_k=S_{\eps}({z_k}) =\{\Gamma g_k c_ub_s, u\in [-\eps, \eps], s=\frac{s'}{1-us'}\ \mbox{for some} \ s'\in [-\eps, \eps] \}.\]
	We have $x=\Gamma g_i c_ub_s=\Gamma g_j  c_{\hat u}b_{\hat s}a_t$. %for some $u,u'\in [-2\eps,2\eps ]$,
	For any $z=\Gamma g_i c_{\tilde u}b_{\tilde s}\in B_i$,  we write
	\begin{align*}
		z&\ =\Gamma g_i  c_ub_s b_{-s}c_{-u} c_{\tilde u}b_{\tilde s}
		=\Gamma g_j  c_{\hat u}b_{\hat s}a_t b_{-s}c_{-u} c_{\tilde u} b_{\tilde s}
		\\
		&\ =\Gamma g_j c_{\hat u}b_{\hat s-se^{t}}c_{(\tilde u-u)e^{-t}}b_{\tilde s e^t}a_t
		=\Gamma g_jc_{\bar u}b_{\bar s}a_{\bar t},
	\end{align*}
	where
	\begin{align*}
		\bar s &\ = \hat s-se^{t}+\tilde s e^t+(\tilde u-u)e^{-t}(\hat s-se^{t})(1+\hat s-se^{t}+\tilde s e^t)
		\\ &\ \ \ \ \  +
		(\tilde u-u)(\hat s-se^{t})\tilde s (1+(\tilde u-u)e^{-t}(\hat s-se^{t})),
		\\
		\bar u&\ = \hat u+(\tilde u-u)e^{-t}-\frac{(\tilde u-u)^2e^{-2t}({\hat s-se^{t}})}{1+(\tilde u-u)e^{-t}({\hat s-se^{t}})},
		\\
		\bar t&\ = t+2\ln(1+(\tilde u-u)e^{-t}({\hat s-se^{t}} )).
	\end{align*}
	After a short calculation, we obtain $|\bar t|\leq 2\alpha, |\bar s|<4\eps, |\bar u|< 4\eps$.
	This means that $z\in \varphi_{[-2\alpha,2\alpha]}(D_j)$, proving Condition (e). 
	
	The theorem is proved.
\end{proof}

From the above proof, it follows the next result.
\begin{remark}\label{e'}\rm With the setting in Theorem \ref{step1},
	
	(e') if $B_i\bigcap \varphi_{[-\alpha,0]}(B_j)\ne\varnothing$
	then $B_i\subset \varphi_{[-2\alpha,0]}(D_j)$ and if $B_i\bigcap \varphi_{[0,\alpha]}(B_j)\ne\varnothing$
	then $B_i\subset \varphi_{[0,2\alpha]}(D_j)$. 
	{\hfill$\Diamond$}
\end{remark}

Now, recall that $X=\varphi_{[-\alpha,0]}(\bigcup_{i=1}^n K_i)=\varphi_{[-\alpha/2,\alpha/2]}(\bigcup_{i=1}^n K_i)$. Let $3\lambda>0$ be the Lebesgue number for the cover $\{\varphi_{[-\alpha/2,\alpha/2]}(K_1),\dots, \varphi_{[-\alpha/2,\alpha/2]}(K_n)\}$, i.e.,
any subset of $X$ with diameter at most  $3\lambda$ contains in some $\varphi_{[-\alpha/2,\alpha/2]}(K_i)$.
Fix $L>0$ and $i\in\{1,\dots,n\}$. For $x\in K_i$, there is a closed neighbourhood $V_x^i$ of $x$  such that 
\[\diam\varphi_t(V_x^i)\leq \lambda \quad \mbox{for all}\quad |t|\leq 2L.\]
Since $K_i$ compact, we cover it by a finite family ${\cal V}_i=\{V^i_{x_1},\dots, V^i_{x_{n_i}}\}$:
\[K_i\subset\bigcup_{j=1}^{n_i} V^i_{x_j}.\] 
We may assume that for any $V\in {\cal V}_i$, $V\subset S_{2\eps/3}(z_i)$.

Let $A\subset X$ be given. Denote 
\[B(A,\lambda)=\{x\in X: d_X(x,A)=\inf_{y\in A}d_X(x,y)<\lambda\}.\]
We claim that $\diam B(\varphi_{-L}(V),\lambda)<3\lambda$
and $\diam B(\varphi_L(V),\lambda)<3\lambda$ for all $V\in {\cal V}_i$. 
For, taking $x,y\in B(\varphi_{-L}(V),\lambda)$, 
there are $z_1,z_2\in \varphi_{-L}(V)$ such that 
$d_X(x,z_1)<\lambda$ and $d_X(y,z_2)<\lambda$. This implies
$d_X(x,y)\leq d_X(x,z_1)+d_X(z_1,z_2)+d_X(z_2,y)<3\lambda$
and hence $\diam B(\varphi_{-L}(V),\lambda)<3\lambda$. Similarly, $\diam B(\varphi_{L}(V),\lambda)<3\lambda$.

By the property of $\lambda$, for each $i\in\{1,\dots,n\}$ and $V\in {\cal V}_i$,
there  are $a(V),b(V)\in\{1,\dots,n\}$ so that 
$B(\varphi_{-L}(V),\lambda)\subset \varphi_{[-\alpha/2,\alpha/2]}(K_{a(V)})$
and $B(\varphi_{L}(V),\lambda)\subset \varphi_{[-\alpha/2,\alpha/2]}(K_{b(V)})$. Then the following maps
\[g_{V^-}=\pr_{D_{a(V)}}\circ\varphi_{-L}:V\longrightarrow K_{a(V)}\subset B_{a(V)}\]
and
\[g_{V^+}=\pr_{D_{b(V)}}\circ\varphi_{L}:V\longrightarrow K_{b(V)}\subset B_{b(V)}\]
are well-defined.
We recursively define the set $R_{i,k}$ and $S_{i,k}$ by
$R_{i,0}=S_{i,0}=K_i$ and for $k\geq 0$
\begin{align} 
	R_{i,k+1}&=\bigcup\limits_{V\in {\cal V}_i}\bigcup\limits_{v\in V}\{\langle y, \pr_{D_i}\varphi_L(z)\rangle_{D_i}:
	y\in K_i, z\in W^s(g_{V^-}(v),R_{a(V),k})  \}, \\
	S_{i,k+1}&=\bigcup\limits_{V\in {\cal V}_i}\bigcup\limits_{v\in V}\{\langle \pr_{D_i}\varphi_{-L}(z),y\rangle_{D_i}:
	y\in K_i, z\in W^u(g_{V^+}(v),S_{b(V),k}) \}. \label{Sik}
\end{align}
%Notice that for $x\in V$, then $g_{V^-}(x)\in K_{a(V)}$ and 
%$R_{a(V),k}\subset K_{a(V)}, W^s(g_{V^-}(x),R_{a(V),k})$ makes sense  and also $W^s(g_{V^-}(x),R_{a(V),k})\subset K_{a(V)}$. 
%
%
%Note that $B_\lambda (\varphi_{-L}(V))\subset \varphi_{[-2\alpha,2\alpha]}K_{a(V)}$, then 
%$\varphi_{L}(K_{a(V)})\subset \varphi_{[-\theta,\theta]}(K_i)$ and hence 
%$\langle y, P_{D_i}\varphi_L(z)\rangle_{D_i}$ makes sense. 

For $x\in (0,1)$, we set $x'=\frac{x}{1-x^2}$. Note that $x<y$ if and only if $x'<y'$ and $x=y$ if and only if $x'=y'$.

In the rest of the paper, we consider $L>4$ and $T:=L-\alpha/2$. 
Define $\eps_0=2\eps/3$ and
$\eps_{k+1}=\eps_0 +2\eps_k e^{-T}, k\geq 0$.
Accordingly, $\eps_0'=2\eps'/3$ and $\eps_{k+1}'=\eps_0'+2\eps_k' e^{-T}$, $k\geq 0$. 
\begin{lemma} \label{Riklm} For every $i\in \{1,\dots,n\}$, the following statements hold.
	
	(a) $R_{i,k}\subset S_{\eps_0}^{\eps_{k}}(z_i)$ and 	
	$S_{i,k}\subset S^{\eps_k}_{\eps_0}(z_i)$ for all $k\geq 0$.  
	
	(b) The sets $R_i=\bigcup\limits_{k=0}^\infty R_{i,k},\, S_i=\bigcup\limits_{k=0}^\infty S_{i,k}$
	are subsets of $B_i$.
	
	(c) The set $C_i=\langle S_i,R_i\rangle_{D_i}=\{\langle p,q\rangle_{D_i}:p\in S_i, q\in R_i  \}$
	is a rectangle contained in $B_i$. 
\end{lemma}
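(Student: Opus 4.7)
My plan is to prove (a), (b), (c) in sequence. All three parts rest on the explicit parametrisation of rectangles $S_{\tt s}^{\tt u}(z_i)$ from Proposition~\ref{recRthm} together with the coordinate formula from Lemma~\ref{proexa}, which shows that $\langle x,y\rangle_D$ inherits the unstable coordinate of $x$ and the primed stable coordinate of $y$.

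For part (a), I will proceed by induction on $k$. The base case is immediate since $R_{i,0}=S_{i,0}=K_i=S_{\eps/2}(z_i)\subset S_{\eps_0}^{\eps_0}(z_i)$, using $\eps/2<\eps_0=2\eps/3$. For the inductive step, I will take a generic $w=\langle y,\pr_{D_i}\varphi_L(z)\rangle_{D_i}\in R_{i,k+1}$ and compute its coordinates in the $z_i$-chart as follows: (i) write $g_{V^-}(v)=\varphi_\tau\varphi_{-L}(v)$ for some $|\tau|\leq\alpha/2$ coming from the definition of $\pr_{D_{a(V)}}$; (ii) parametrise $z\in W^s(g_{V^-}(v),R_{a(V),k})$ as the right translate $g_{V^-}(v)\cdot b_{\Delta s}$, where the induction hypothesis bounds $|\Delta s|$ by roughly $2\eps_k$; (iii) flow forward to obtain $\varphi_L(z)=\varphi_L(g_{V^-}(v))\cdot b_{\Delta s\, e^{-L}}$, the stable perturbation having been contracted by $e^{-L}$; (iv) use the commutation identity $a_{-\tau}b_s=b_{se^{-\tau}}a_{-\tau}$ to project onto $D_i$, obtaining $\pr_{D_i}\varphi_L(z)=\Gamma g_i c_{u_v}b_{s_v+\Delta s\, e^{\tau-L}}$; (v) apply Lemma~\ref{proexa} to compute $w$. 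Since $|\tau-L|\geq T$, the perturbation in the primed stable coordinate is at most $2\eps_k e^{-T}$, and the base contribution from $v\in V\subset S_{\eps_0}(z_i)$ is $\eps_0$, giving exactly the recursion $\eps_{k+1}=\eps_0+2\eps_k e^{-T}$. The argument for $S_{i,k+1}$ is symmetric, with $\varphi_{-L}$ replacing $\varphi_L$ and the unstable horocycle $c_u$ replacing the stable one.

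For part (b), I will observe that the scalar recurrence $\eps_{k+1}=\eps_0+2\eps_k e^{-T}$ is a strict contraction whenever $2e^{-T}<1$, which holds for our range $L>4$. Its unique fixed point $\eps_\ast=\eps_0/(1-2e^{-T})$ is the monotone limit of $(\eps_k)$. Choosing $L$ sufficiently large ensures $\eps_\ast\leq\eps$, so that $\eps_k\leq\eps$ for every $k$; combined with (a) this yields $R_{i,k}\cup S_{i,k}\subset S_\eps(z_i)=B_i$, and thus $R_i,S_i\subset B_i$.

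For part (c), I will verify the two rectangle axioms for $C_i$. Stability under $\langle\cdot,\cdot\rangle_{D_i}$ follows directly from Lemma~\ref{=x=ylm}(d): if $p_j=\langle s_j,r_j\rangle_{D_i}\in C_i$ with $s_j\in S_i,\,r_j\in R_i$, then $\langle p_1,p_2\rangle_{D_i}=\langle s_1,r_2\rangle_{D_i}\in C_i$. The inclusion $C_i\subset B_i$ follows because $B_i$ is itself a rectangle containing $S_i\cup R_i$ by part (b), so $\langle p,q\rangle_{D_i}\in B_i$ whenever $p,q\in B_i$. For closedness in $D_i$, I will pass to the closures $\overline{S_i}, \overline{R_i}$ (compact subsets of the compact rectangle $B_i$) and use continuity of $\langle\cdot,\cdot\rangle_{D_i}$; the resulting closed set still satisfies the stability property by continuity, so it coincides with the rectangle one is after.

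The main obstacle will be the coordinate bookkeeping in the inductive step of (a): one must track the change of basepoint from $z_{a(V)}$ to $z_i$ through the flow $\varphi_L$ and the Poincaré projection, verifying all intermediate points stay in the domain where $\pr_{D_i}$ is defined and that the cumulative bound $\eps_k$ remains compatible with the global hyperbolic threshold $\sigma_*/4$. A secondary technical point is the closedness assertion in (c), which requires passage to closures because $R_i$ and $S_i$ are defined as countable unions.
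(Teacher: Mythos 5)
Your proposal follows essentially the paper's own approach for part (a) in the $R_{i,k}$ case, for part (b), and for part (c), but there is a real gap in how you dispose of the $S_{i,k}$ case in (a), which the paper handles with a change of chart that your ``symmetry'' slogan conceals.

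The subtle point is that the unstable leaf $W^u\big(g_{V^+}(v),S_{b(V),k}\big)$ is \emph{not} a right $c_{\Delta u}$-translate of $g_{V^+}(v)$ when the rectangle is an $S$-rectangle. By Proposition~\ref{WSlm}(a), the points of $W^u(x,S_\eps(z))$ are $\Gamma g c_u b_s$ with $u$ free and $s$ constrained so that the \emph{primed} stable coordinate $s'$ is held fixed; in the $(u,s)_z$-chart of $P_\eps(z)$ this is not a single horocycle orbit of $c$. If you literally ``replace $b_s$ by $c_u$ and $\varphi_L$ by $\varphi_{-L}$'' in the stable computation, step (ii) of your plan breaks: there is no $\Delta u$ with $z=g_{V^+}(v)\cdot c_{\Delta u}$ \emph{inside the same $S$-chart}. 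The paper's way around this is to pass to the alternative Poincar\'e sections $\widetilde D_i=\widetilde P_{4\eps}(z_i)$ and the rectangles $T_\eps(z_i)$ of Proposition~\ref{recRthm}, and to use Lemma~\ref{rec-pro-lm} and Proposition~\ref{proa} to transport $\widetilde S_{i,k}=\pr_{\widetilde D_i}(S_{i,k})$, $\widetilde K_i$, $\widetilde{\cal V}_i$ across the two charts. In the $T$-chart a point of $W^u$ \emph{is} a right $c$-translate, the flow $\varphi_{-L}$ contracts that translate by $e^{-L}$, and the rest of the recursion proceeds exactly as in the stable case; this yields $\widetilde S_{i,k}\subset T_{\eps_0}^{\eps_k}(z_i)$, which is equivalent to the stated inclusion via the projection identity $\pr_{D_i}(T_{\eps_0}^{\eps_k}(z_i))=S_{\eps_0}^{\eps_k}(z_i)$. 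You should make this chart change explicit rather than appealing to symmetry.

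Two smaller remarks. First, in (b) you do not actually get to ``choose $L$ sufficiently large'' at this point; $L>4$ (hence $T=L-\alpha/2>3$) is fixed in the setup before the lemma, and one then \emph{checks} that this threshold already gives $\eps_k<\frac{\eps_0}{1-2e^{-T}}<\frac{3}{2}\eps_0=\eps$, which is exactly the fixed-point computation you describe. Second, your observation that the closedness axiom $(R_1)$ for $C_i$ requires passing to closures of $S_i$ and $R_i$ (they are countable unions, hence a priori not closed) is a legitimate point that the paper's proof of (c) does not address — the paper verifies only $(R_2)$ via Lemma~\ref{=x=ylm}(d); so that part of your plan is a genuine tightening.
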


\begin{proof}   (a) We prove the former by induction. 
	First,	
	$$R_{i,1}=\underset{V\in {\cal V}_i}\bigcup\bigcup_{v\in V}\{\langle y, \pr_{D_i}\varphi_L(z)\rangle_{D_i}:
	y\in K_i, z\in W^s(g_{V^-}(v), K_{a(V)})\}.$$
	For any $x\in R_{i,1}$,  $x=\langle y, \pr_{D_i}\varphi_{L}(z)\rangle_{D_i}$
	for $z\in W^s(\pr_{D_{a(V)}}\varphi_{-L}(v),K_{a(V)})$
	with some $v\in V\in {\cal V}_i$. We first show that 
	$\pr_{D_i}\varphi_{L}(z)\in S_{\eps_0}^{\eps_1}(z_i)$.
	Let $v=\Gamma g_i c_{u_v}b_{s_v}\in V\subset S_{2\eps/3}(z_i)$ and $\pr_{D_{a(V)}}\varphi_{-L}(v)=\varphi_{-L-\tau}(v)=\Gamma g_l c_{u}b_s\in K_l=S_{\eps/2}(\Gamma g_l)$ with $l={a(V)}\in\{1,\dots,n\}$ for some $\tau\in [-\alpha/2,\alpha/2]$.
	According to Proposition \ref{WSlm}\,(a), $z=\Gamma g_l c_{u}b_{s_z}$.
	Since $\varphi_{L+\tau}(\pr_{D_{a(V)}}\varphi_{-L}(v))=v$,  
	it follows that $\Gamma g_l c_ub_s a_{L+\tau}=\Gamma g_i c_{u_v}b_{s_v}$.
	This implies that $$\hat z:=\varphi_{L+\tau}(z)=\Gamma g_l c_ub_sa_{L+\tau}b_{(s_z-s)e^{-L-\tau}}
	=\Gamma g_i c_{u_v}b_{s_v+(s_z-s)e^{-L-\tau}}\in D_i.$$
	Also $\hat z=\pr_{D_i}\varphi_{L}(z)=(u_{\hat z}, s_{\hat z})_{z_i}\in D_i$,
	where $$u_{\hat z}=u_v\ \mbox{ and }\ s_{\hat z}=(u,s_v+(s_z-s)e^{-L-\tau})_{z_i}.$$ 
	Then $$|s_{\hat z}|\leq \eps_0' +2\eps_0' e^{-L+\alpha/2}\leq\eps_0' +2\eps_0' e^{-T}=\eps_1'$$ shows that 
	$\hat z\in S^{\eps_{1}}_{\eps_0}(z_i)$. Since $y\in K_i=S_\eps(z_i)\subset S_{\eps_0}^{\eps_1}(z_i)$,
	we get $x=\langle y,\hat z\rangle_{D_i}\in S_{\eps_0}^{\eps_{1}}(z_i)$ due to the fact that $S_{\eps_0}^{\eps_{1}}(z_i)$ 
	is a rectangle. Therefore $R_{i,1}\subset S_{\eps_0}^{\eps_1}(z_i)$.
	
	Next, assume that $R_{i,j-1}\in S^{\eps_{j-1}}_{\eps_0}(z_i)$ for $j>1$.
	%We show that $R_{i,k}\in S^{\eps_{k}}_\eps(z_i)$.
	%$$R_{i,k+1}=\underset{V\in {\cal V}_i}\bigcup\{\langle y, P_{D_i}\varphi_L(z)\rangle_{D_i}:	y\in K_i, z\in W^s(g_{V^-}(v), R_{a(V),k})\mbox{ with } x\in V \}.$$
	Take $x=\langle y, \pr_{D_i}\varphi_{L}(z)\rangle_{D_i}\in R_{i,j}$, where $y\in K_i$, $z\in W^s(\pr_{D_{a(V)}}\varphi_{-L}(v), R_{a(V),j-1})$ for some $v=\Gamma g_i c_{u_v}b_{s_v}\in V\in {\cal V}_i$.
	Similarly to above, 
	$\pr_{D_{a(V)}}\varphi_{-L}(v)=\varphi_{-L-\tau}(v)=\Gamma g_l c_{u}b_s\in R_{l,j-1}$ with $l={a(V)}$ for some $\tau\in [-\alpha/2,\alpha/2]$.
	Writing $z=\Gamma g_l c_{u}b_{s_z}$, we have 
	\begin{align*} 
		\hat z:=&\ \pr_{D_i}\varphi_{-L}(z)=\varphi_{L+\tau}(z)=\Gamma g_l c_ub_sa_{L+\tau}b_{(s_z-s)e^{-L-\tau}}\\
		=&\ \Gamma g_i c_{u_v}b_{s_v+(s_z-s)e^{-L-\tau}}
		=(u_{\hat z}, s_{\hat z})_{z_i}\in D_i,
	\end{align*} 
	where $$u_{\hat z}=u_v,\ s_{\hat z}=s_v+(s_z-s)e^{-L-\tau}.$$ Then 
	%Furthermore, $\hat z\in W^s(v, B_i)$ since $u_{\hat z}=u_v$ and 
	%$s_{\hat z}=s_v+(s_z-s)e^{-L-\tau}$.
	$$|s_{\hat z}|\leq \eps'_0 +2\eps'_{j-1} e^{-T}=\eps'_{j}$$
	yields
	$\hat z\in S^{\eps_{j}}_{\eps_0}(z_i)$. Since $y\in K_{i}\subset S_{\eps_0}^{\eps_j}(z_i)$, 
	we obtain  $x=\langle y,\hat z\rangle_{D_i} \in S_{\eps_0}^{\eps_{j}}(z_i)$ and so $R_{i,j}\subset S_{\eps_0}^{\eps_j}(z_i)$. We deduce that $R_{i,k}\subset S_{\eps_0}^{\eps_k}(z_i)$ for all $k\geq 0$. 
	
	To verify the latter, we need the other versions of Poincar\'e sections and rectangles. 
	Define
	$$\widetilde D_i=\widetilde P_{4\eps }(z_i)=\{\Gamma g_i b_s c_u: u,s\in [-4\eps,4\eps]\}$$
	and $$\widetilde B_i=T_{\eps}(z_i)=\{\Gamma g_i b_s c_u: s\in [-\eps,\eps]\mbox{ and } u=\frac{u'}{1-su'}\quad \mbox { for some } u'\in [-\eps,\eps]\}.$$ 
	We recall from Lemma \ref{rec-pro-lm} that $\pr_{D_i}(\widetilde B_i)=B_i$ and $\pr_{\widetilde D_i}(B_i)=\widetilde B_i$. 
	By projecting to $\widetilde D_i$,
	\eqref{Sik} is equivalent to
	\begin{equation}\label{tildeS}
		\widetilde S_{i,k+1}
		=\bigcup\limits_{\widetilde V\in \widetilde {\cal V}_i}\bigcup\limits_{v\in \widetilde V}\{\langle \pr_{\widetilde D_i}\varphi_{-L}(z),y\rangle_{\widetilde D_i}:
		y\in \widetilde K_i, z\in W^u(g_{\widetilde V^+}(v),S_{b(\widetilde V),k}) \},
	\end{equation}
	where $\widetilde S_{i,k}=\pr_{\widetilde D_i}(S_{i,k}),$
	$\widetilde K_i=\pr_{\widetilde D_i}(K_i)$, $\widetilde{\cal V}_i=\pr_{\widetilde D_i}({\cal V}_i)$,
	$\widetilde V_i=\pr_{\widetilde D_i}(V_i)$, and
	\[g_{\widetilde V^+}=\pr_{\widetilde D_{b(\widetilde V)}}\circ\varphi_L:\widetilde V\longrightarrow \widetilde K_{b(\widetilde V)}\subset \widetilde B_{b(\widetilde V)}.\]
	Recall from Proposition \ref{rec-pro-lm} that $\pr_{\widetilde D_i}(S_{\eps_0}^{\eps_k}(z_i))=T^{\eps_0}_{\eps_k}(z_i)$
	and $\pr_{D_i}(T^{\eps_0}_{\eps_k}(z_i))=S_{\eps_0}^{\eps_k}(z_i)$. Together with  $\pr_{D_i}(\widetilde S_{i,k})=S_{i,k}$, the inclusion $S_{i,k}\subset S_{\eps_k}^{\eps_0}(z_i)$ is equivalent to
	\begin{equation}\label{eqll}\widetilde S_{i,k}\subset T_{\eps_0}^{\eps_k}(z_i).
	\end{equation}
	%Take $x=\langle y, \pr_{\widetilde D_i}\varphi_{-L}(z)\rangle_{\widetilde D_i}\in \widetilde S_{i,1}$
	We first verify that $\widetilde S_{i,1}\subset T_{\eps_0}^{\eps_1}(z_i)$. 
	For	$x\in \widetilde S_{i,1}$, $x=\langle \pr_{\widetilde D_i}\varphi_{-L}(z),y\rangle_{\widetilde D_i}$,
	where  $y\in \widetilde K_i$ and $z\in W^u( \pr_{\widetilde D_{b(\widetilde V)}}\varphi_{L}(v),\widetilde K_{b(\widetilde V)})$
	for $v=\Gamma g_i b_{s_v}c_{u_v}\in \widetilde V\in \widetilde{\cal V}_i$.
	There is a $\tau\in [-\alpha/2,\alpha/2]$ such that 	  $\pr_{\widetilde D_{b(\widetilde V)}}\varphi_{L}(v)=\varphi_{L+\tau}(v)=\Gamma g_l b_sc_{u}\in \tilde K_l=T_{\eps/2}(\Gamma g_l)$ for $l={b(\widetilde V)}$.
	By Proposition \ref{WSlm}\,(b), $z=\Gamma g_l b_{s}c_{u_z}$.
	Since $\varphi_{-L-\tau}(\pr_{\widetilde D_{b(\widetilde V)}}\varphi_{L}(v))=v$,  it follows that $\Gamma g_l b_sc_u a_{-L-\tau}=\Gamma g_i b_{s_v}c_{u_v}$.
	Then $$\hat z:=\varphi_{-L-\tau}(z)=\Gamma g_l b_sc_ua_{L+\tau}c_{(u_z-u)e^{-L-\tau}}
	=\Gamma g_i b_{s}c_{u_v+(u_z-u)e^{-L-\tau}}\in \widetilde D_i$$
	implies that
	$$\hat z=\pr_{\widetilde D_i}\varphi_{-L}(z)=(s_{\hat z}, u_{\hat z})_{z_i}',$$
	where \[ s_{\hat z}=s, \ u_{\hat z}=u_v+(u_z-u)e^{-L-\tau}.\] 
	The estimate $$|u_{\hat z}|\leq \eps_0' +2\eps_0' e^{-L+\alpha/2}\leq \eps_0' +2\eps_0' e^{-T}=\eps_1'$$ shows that 
	$\hat z\in T^{\eps_0}_{\eps_1}(z_i)$. Since $y\in \tilde K_i=T_{\eps/2}(z_i)\subset T_{\eps_1}^{\eps_0}(z_i)$, 
	we have  $x\in T^{\eps_0}_{\eps_{1}}(z_i)$ due to the fact that $T^{\eps_0}_{\eps_{1}}(z_i)$ 
	is a rectangle, and we deduce $\widetilde S_{i,1}\subset T^{\eps_0}_{\eps_1}(z_i)$.
	In the similar way, we can show that if $\widetilde S_{i,j-1}\subset T^{\eps_0}_{\eps_{j-1}} (z_i)$ for $i>1$, then 
	$\widetilde S_{i,j}(z_i)\subset T^{\eps_0}_{\eps_{j}}(z_i)$ and hence \eqref{tildeS} is obtained. 
	
	(b) We have
	\begin{align*}
		\eps_k&=\eps_0+\eps_0 (2e^{-T})+ \dots + \eps_0 (2e^{-T})^{k-1}+ \eps_0 (2e^{-T})^k
		\\
		&=\frac{1-(2e^{-T})^{k+1}}{1-2 e^{-T}}\eps_0
		<\frac{\eps_0}{1-2e^{-T}}<\frac{3}{2}\eps_0=\eps.
	\end{align*} for all $k\geq 1$ when $T>3$. This implies that 
	$S_{\eps_0}^{\eps_k}(z_i)\subset S_{\eps}(z_i)= B_i$ and $S^{\eps_0}_{\eps_k}(z_i)\subset S_{\eps}(z_i)= B_i$
	for all $k\geq 1$. Therefore $R_i=\bigcup\limits_{k=0}^\infty R_{i,k}\subset B_i$ and
	$S_i=\bigcup\limits_{k=0}^\infty S_{i,k}\subset B_i$.

	(c) If $x_i=\langle p_i,q_i\rangle_{D_{i}}\in C_i, i=1,2$,  then
	$\langle x_1,x_2\rangle_{D_i}=
	\langle \langle p_1,q_1\rangle_{D_{i}},
	\langle p_2,q_2\rangle_{D_{i}}
	\rangle_{D_i}
	=\langle p_1,q_2\rangle_{D_i}\in C_i$ by Lemma \ref{=x=ylm}\,(d).
	This shows that each $C_i$ is a rectangle. Due to (b), it follows that
	$C_i\subset B_i$.
\end{proof}

\begin{figure}[h!]
	\begin{center}
		\begin{minipage}{1\linewidth}
			\centering
			\includegraphics[angle=0,width=1\linewidth]{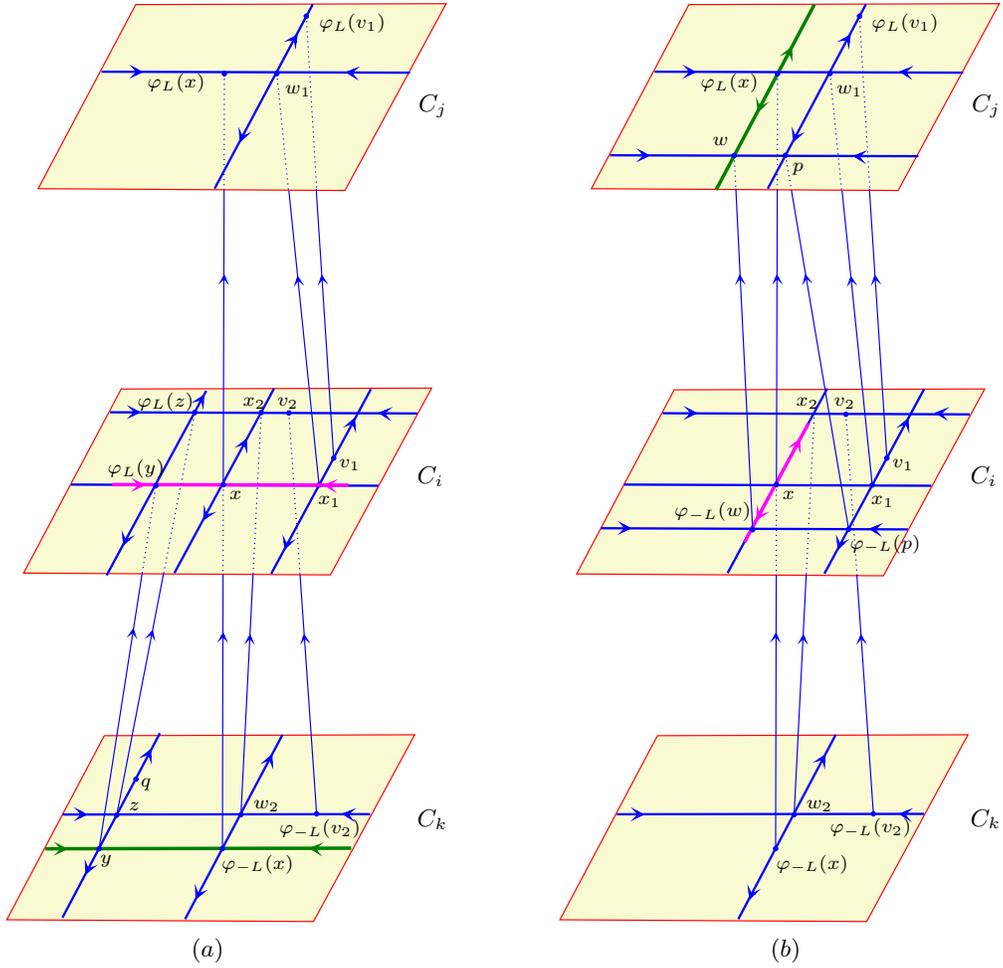}
		\end{minipage}
	\end{center}
	\caption{Illustration for the proof of Lemma \ref{CiCj} }\label{RS}
\end{figure}
The next lemma is a key result, which help us prove the final statement (Lemma \ref{flm2}). 
\begin{lemma}\label{CiCj}
	Consider $x\in C_i$. 
	
	\noindent 
	(a) There is a $k$ so that 
	\[\varphi_{-L}(x)\in \varphi_{[-\alpha/2,\alpha/2]}(C_k)
	\quad\mbox{and}\quad 
	\pr_{D_i}\varphi_{L}W^s(\pr_{D_k}\varphi_{-L}(x),C_k)\subset W^s(x,C_i). 
	\]
	
	\noindent
	(b) There is a $j$ so that
	\[\varphi_{L}(x)\in \varphi_{[-\alpha/2,\alpha/2]}(C_i)
	\quad\mbox{and}\quad 
	\pr_{D_i}\varphi_{-L}W^u(\pr_{D_j}\varphi_L(x),C_i)\subset W^u(x,C_i).
	\] 
\end{lemma}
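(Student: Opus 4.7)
The plan is to combine three ingredients: the cover $X = \varphi_{[-\alpha/2, \alpha/2]}(\bigcup_l K_l)$, the recursive construction of $R_i$ (and $S_i$), and Lemma \ref{DD'}, in order to transport the stable (respectively unstable) manifold under $\pr_{D_i}\varphi_{\pm L}$. I sketch (a); part (b) is the symmetric statement with $W^u$, forward flow $\varphi_L$, and the $S_i$-construction (in its $T$-rectangle/$\widetilde D$-version, as used at the end of the proof of Lemma \ref{Riklm}) in place of $W^s$, backward flow $\varphi_{-L}$, and the $R_i$-construction.

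For (a), first pick $V \in \mathcal{V}_i$ with $x \in V$ and set $k := a(V)$; one is free to enlarge the closed neighbourhoods $V^i_{x_j} \subset S_{2\eps/3}(z_i)$ so that $\mathcal{V}_i$ covers not only $K_i$ but all of $C_i \subset S_{2\eps/3}(z_i)$, which is admissible by continuity together with the diameter condition. By definition of $a(V)$, $\varphi_{-L}(x) \in \varphi_{[-\alpha/2, \alpha/2]}(K_k) \subset \varphi_{[-\alpha/2, \alpha/2]}(C_k)$, which gives the first assertion. Put $y' := \pr_{D_k}\varphi_{-L}(x) = g_{V^-}(x) \in K_k$; since $y' = \varphi_{-L-\tau}(x)$ for some $|\tau| \le \alpha/2$, one also has $\pr_{D_i}\varphi_L(y') = x$.

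Now fix any $z \in W^s(y', C_k)$. From $z \in C_k = \langle S_k, R_k\rangle_{D_k}$ together with $z \in W^s(y')$, Lemma \ref{=x=ylm}(a) rewrites $z = \langle y', q_z\rangle_{D_k}$ with $q_z \in R_k$; say $q_z \in R_{k, n}$. The stable-direction contraction keeps $\varphi_t(y')$ and $\varphi_t(z)$ within the $\delta_2$-tube on $[0, L]$, so Lemma \ref{DD'} yields $\hat z := \pr_{D_i}\varphi_L(z) = \langle x, \hat z\rangle_{D_i}$, i.e.\ $\hat z \in W^s(x, D_i)$. Since the choice $V$, $v = x$ satisfies $g_{V^-}(v) = y'$ and $a(V) = k$, the recursive definition of $R_{i, n+1}$ places $\langle y_0, \hat z\rangle_{D_i}$ in $R_{i, n+1} \subset R_i$ for every $y_0 \in K_i$; pick any such $y_0$ and set $q' := \langle y_0, \hat z\rangle_{D_i} \in R_i$. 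Writing $x = \langle p, q\rangle_{D_i}$ with $p \in S_i$, a short coordinate computation based on Lemma \ref{proexa} (noting $u_p = u_x = u_{\hat z}$) shows $\langle p, q'\rangle_{D_i} = \hat z$, so $\hat z \in \langle S_i, R_i\rangle_{D_i} = C_i$. Combined with $\hat z \in W^s(x, D_i)$, this yields $\hat z = \langle x, \hat z\rangle_{C_i} \in W^s(x, C_i)$, completing (a).

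The main obstacle is the identification $\langle p, q'\rangle_{D_i} = \hat z$: it rests on the fact that the map $y_0 \mapsto \langle y_0, \hat z\rangle_{D_i}$ traces out exactly the unstable slice of $\hat z$ in $D_i$, a property encoded in the coordinate formula of Lemma \ref{proexa}. A secondary subtlety is the need for $\mathcal{V}_i$ to cover $C_i$ rather than only $K_i$, which is handled by a small enlargement of the cover. For part (b) one runs the dual argument: reverse time, swap $W^s \leftrightarrow W^u$, $R_i \leftrightarrow S_i$, $V^-, a(V) \leftrightarrow V^+, b(V)$, and work throughout with the $T$-rectangles.
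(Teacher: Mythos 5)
Your proposal follows the same overall architecture as the paper (transport the stable slice via the recursive $R$-construction and Lemma~\ref{DD'}), but differs in one key structural choice that introduces a genuine gap. The paper locates the index $k$ by decomposing $x$ itself, $x = \langle x_1, x_2\rangle_{D_i}$ with $x_2 \in R_i$, and extracting the $V$, $v_2$, and $a(V)$ that built $x_2$ in the recursion; the construction of $C_i$ then hands you the data needed to certify $\pr_{D_i}\varphi_L(z) \in C_i$ for a \emph{specific} $z$, and Lemma~\ref{DD'} is used only once at the end to identify $\pr_{D_i}\varphi_L(y) = \langle x, \pr_{D_i}\varphi_L(z)\rangle_{D_i}$. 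You instead try to cover $x$ directly by $\mathcal{V}_i$, which forces the enlargement.

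The gap is the enlargement itself: $\mathcal{V}_i$ is a cover of $K_i = S_{\eps/2}(z_i)$, while a generic $x \in C_i \subset S_{2\eps/3}(z_i)$ need not lie in any $V \in \mathcal{V}_i$, since $\eps/2 < 2\eps/3$. Your fix --- ``enlarge $\mathcal{V}_i$ so that it covers $C_i$'' --- is circular as stated, because $R_{i,k}$, $S_{i,k}$, and hence $C_i$ are \emph{defined in terms of} $\mathcal{V}_i$: modify the cover and you modify $C_i$, and there is no a priori guarantee that the enlarged cover catches the new $C_i$. The non-circular version of the fix would be to choose $\mathcal{V}_i$ to cover all of $S_{2\eps/3}(z_i)$ from the outset (this is consistent with every estimate in the proof of Lemma~\ref{Riklm}, which only uses $V \subset S_{2\eps/3}(z_i)$ and the inclusion $C_i \subset S_{2\eps/3}(z_i)$ is established there). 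But that amounts to altering the construction the lemma is about, and one would then need to re-verify the downstream lemmas for the modified $C_i$'s. The paper's route of pulling the data $(V, v_2, k = a(V))$ out of the recursive decomposition of $x_2 \in R_i$ avoids this entirely, which is precisely why that decomposition appears at the start of the paper's proof. Once you have $k$ and an appropriate $z$, the remaining steps you propose (writing $z = \langle y', q_z\rangle_{D_k}$ with $q_z \in R_{k,n}$, invoking Lemma~\ref{DD'} to get $\hat z \in W^s(x, D_i)$, and the coordinate check $\langle p, \langle y_0, \hat z\rangle_{D_i}\rangle_{D_i} = \hat z$ via $u_p = u_x = u_{\hat z}$) are correct.
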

\begin{proof} (a)  Since $X=\cup_{i=1}^n \varphi_{[-\alpha/2,\alpha/2]}(K_i)$,
	it follows the former, also $\pr_{D_k}\varphi_{-L}(x)$ makes sense.
	Write $x=\langle x_1,x_2\rangle_{D_i}\in C_i$,
	where $x_1=\pr_{D_i}\varphi_{-L}(w_1), x_2=\pr_{D_i}\varphi_{L}(w_2)$
	for $w_1\in W^u(\pr_{D_j}\varphi_{L}(v_1), S_j)$, $w_2\in W^s(\pr_{D_k}\varphi_{-L} (v_2), R_k)$
	for some $v_1,v_2\in S_{2\eps/3}(z_i)$; see Figure \ref{RS}\,(a) for an illustration.
	Write $x=\Gamma g_i c_{u_{ x}}b_{s_{x}}\in C_i$
	and $\pr_{D_k}\varphi_{-L}(x)=\varphi_{-L-\tau}(x)=\Gamma g_i c_{u_{ x}}b_{s_{x}} a_{-L-\tau}=\Gamma g_k c_u b_s \in C_k$
	for some $\tau\in [-\alpha/2,\alpha/2]$.
	If $y\in W^s(\pr_{D_k}\varphi_{-L}(x), C_k)$, then by Proposition \ref{WusT}\,(a), $y=\Gamma g_k c_u b_{s_y}$.
	It follows that $$\varphi_{L+\tau}(y)=\Gamma g_k c_u b_{s}a_{L+\tau}b_{(s_y-s)e^{-L-\tau}}
	=\Gamma g_j c_{u_x} b_{s_x+(s_y-s)e^{-L-\tau} }\in W^s(x, D_i).$$
	This means that $\pr_{D_i}\varphi_L(y)=\varphi_{L+\tau}(y)\in W^s(x, D_i)$,
	and it remains to show $\varphi_{L+\tau}(y)\in C_i$. 
	For, let $z=\langle \pr_{D_k}\varphi_{-L}(v_2),y\rangle_{D_k}\in C_k$.
	We check that $\pr_{D_i}\varphi_L(z)\in C_i$ and also $\pr_{D_i}\varphi_L(y)=\langle x,\pr_{D_i}\varphi_L(z)\rangle_{D_i}$.
	
	Write $v_2=\Gamma g_i c_{u_{v_2}}b_{s_{v_2}}\in S_{2\eps/3}(z_i)$ and 
	$\pr_{D_k}\varphi_{-L}(v_2)=\varphi_{-L-r}(v_2)=\Gamma g_i c_{u_{v_2}}b_{s_{v_2}}a_{-L-r}= \Gamma g_k c_{\hat u}b_{\hat s}\in C_k$
	for some $r\in [-\alpha/2,\alpha/2]$. Then $z=\Gamma g_k c_{\hat u}b_{s_z}$ yields
	$$\varphi_{L+r}(z)=\Gamma g_i c_{u_{v_2 }}b_{s_{v_2}+(s_z-\hat s)e^{-L-\tau}}\in C_i$$
	by the construction of $C_i$; see the proof of Lemma \ref{Riklm}\,(a).
	Next, since $y\in W^s( \pr_{D_k}\varphi_{-L}(x), C_k)$, $y=\langle \pr_{D_k}\varphi_{-L}(x), q\rangle_{D_k}$ for some $q\in C_k$. It follows from Lemma \ref{=x=ylm}\,(a) that 
	\[z=\langle \pr_{D_k}\varphi_{-L}(v_2), y\rangle_{D_k}
	=\langle \pr_{D_k}\varphi_{-L}(v_2),\langle \pr_{D_k}\varphi_{-L}(x), q\rangle_{D_k}\rangle_{D_k}
	=\langle \pr_{D_k}\varphi_{-L}(v_2), q\rangle_{D_k}.\]
	This implies 
	\[y=\langle \pr_{D_k}\varphi_{-L}(x), q\rangle_{D_k}=\langle \pr_{D_k}\varphi_{-L}(x), \langle \pr_{D_k}\varphi_{-L}(v_2),q\rangle_{D_k}\rangle_{D_k}
	= \langle \pr_{D_k}\varphi_{-L}(x),z\rangle_{D_k}.\]
	We are in a position to show that
	$\pr_{D_i}\varphi_{L}(y)=\langle x, \pr_{D_i}\varphi_L(z)\rangle_{D_i}$. There is no loss of generality, we may assume that $r\leq \tau$. Define 
	\begin{equation}
		s(t)=\begin{cases}
			t&\mbox{if}\quad t\in [0, L+r],\\
			L+r&\mbox{if}\quad t\in [L+r,L+\tau].
		\end{cases}
	\end{equation}
	Then $s: [0,L+\tau]\to \R$ is continuous with $s(0)=0$ and for $t\in [0,L+\tau]$
	\begin{align}\label{eqC1}
		d_X(\varphi_t(\varphi_{-L-\tau}(x)),\varphi_{s(t)}(z))
		\leq d_X(\varphi_t(\varphi_{-L-\tau}(x)),\varphi_t(w_2))
		+d_X(\varphi_t(w_2),\varphi_{s(t)}(z)).
	\end{align}
	Owing to $z\in W^s( w_2, C_k)$ and $s(t)=t$, $t\in [0,L+r]$, it follows that
	for $t\in [0,L+r]$
	\[d_X(\varphi_t(w_2),\varphi_{s(t)}(z))<\eps e^{-t},\]
	whence for $t\in [L+r,L+\tau]$
	\begin{align*}
		d_X(\varphi_{t}(w_2),\varphi_{s(t)}(z))
		&= d_X(\varphi_t(w_2),\varphi_{L+r}(z))\\
		&\leq d_X(\varphi_{t}(w_2), \varphi_{L+r}(w_2))
		+d_X(\varphi_{L+r}(w_2),\varphi_{L+r}(z))\\
		&\leq \frac{1}{\sqrt 2}|t-(L+r)|+ \eps \leq  \frac{1}{\sqrt 2}|\tau-r|+\eps<2\alpha,
	\end{align*} 
	where we have used Lemma \ref{tslm}. Hence
	\begin{equation}\label{eqC2}d_X(\varphi_{t}(w_2),\varphi_{s(t)}(z))< 2\alpha\ \mbox{ for all } t\in [0, L+\tau].
	\end{equation}
	In addition, since $x\in W^u(x_2, C_i)$, $d_X(\varphi_t(x),\varphi_t(x_2))<\eps e^{t}$ for all $t\in\R$.
	Write  $\varphi_{-L-\bar\tau}(x_2)=w_2$ for some $\bar\tau\in [-\alpha/2,\alpha/2]$.
	Let us assume that $\tau\leq\bar\tau$. 
	For $t\in [0,L+\tau]$, 
	\begin{align}\notag 
		d_X(\varphi_t(\varphi_{-L-\tau}(x)),\varphi_t(w_2))
		&\leq d_X(\varphi_t(\varphi_{-L-\tau}(x)), \varphi_{t+\tau-\bar \tau}(\varphi_{-L-\tau}(x)))
		\\ \notag 
		&\ \ \ +d_X(\varphi_{t+\tau-\bar\tau}(\varphi_{-L-\tau}(x)),\varphi_t(\varphi_{-L-\bar\tau}(x_2)))
		\\ \notag 
		&<\frac{1}{\sqrt 2}|\tau-\bar\tau|+d_X(\varphi_{t-L-\bar\tau}(x),\varphi_{t-L-\bar\tau}(x_2))\\
		& \label{eqC3}
		<\alpha+\eps <2\alpha.
	\end{align}
	Combining \eqref{eqC1}-\eqref{eqC3}, we get
	\[d_X(\varphi_t(z),\varphi_{s(t)}(x))<4\alpha<\delta_2\mbox{\ for all \ } t\in [0,L+\tau].\]
	Apply Lemma \ref{DD'} to obtain
	$\langle x,\varphi_{L+r}(z) \rangle_{D_i} 
	=\pr_{D_i}\varphi_{L+\tau}(y)=\pr_{D_i}\varphi_{L}(y)$, which proves that $y\in W^s(x, C_i)$.

	(b)  The former is clear and so $\pr_{D_j}\varphi_L(x)$ makes sense.
	For $w\in W^u(\pr_{D_j}\varphi_L(x),C_j)$, we need to verify that $\pr_{D_i}\varphi_{-L}(w)\in W^u(x,C_i)$.
	We need the other version of rectangles. 
	The inclusion is equivalent to 
	$\pr_{\widetilde D_i}\varphi_{-L}W^u(\pr_{\widetilde D_j}\varphi_L(\tilde x),\widetilde C_j)\subset W^u(\tilde x,{\widetilde C}_i)$,
	where $\tilde x=\pr_{{\widetilde D}_i}(x), \widetilde C_i=\pr_{\widetilde D_i}(C_i), 
	\widetilde C_j=\pr_{\widetilde D_j}(C_j)$. 
	
	Write $\tilde x=\Gamma g_i b_{s_{\tilde x}}c_{u_{\tilde x}}\in {\widetilde C}_i$
	and $\varphi_{L+\nu}(\tilde x)=\pr_{\widetilde D_j}\varphi_L(\tilde x)=\Gamma g_j b_sc_u\in \widetilde C_j$
	and $\tilde w=\Gamma g_j b_s c_{u_{\tilde w}}\in W^u(\pr_{\widetilde D_j}\varphi_L(\tilde x),\widetilde C_j)$; see Proposition \ref{WSlm}\,(b).
	Then $\varphi_{-L-\nu}(\tilde w)=\Gamma g_ib_{s_{\tilde x}}c_{u_{\tilde x} +(u_{\tilde w}-u)e^{-L-\nu}}
	\in W^u(\tilde x,{\widetilde D}_i)$. Equivalently,  $\pr_{D_i}\varphi_{-L}(w)\in W^u(x,D_i)$.
	Let $p=\langle w, w_1\rangle_{D_j}$. Analogously to above, we can verify that
	$\pr_{D_i}\varphi_{-L}(p)\in C_i$ and 
	$\pr_{D_i}\varphi_{-L}(w)=\langle \pr_{D_i}\varphi_{-L}(p),x \rangle_{D_i}\in W^u( x, C_i )$; see
	Figure \ref{RS} for a depiction. The proof is complete.
\end{proof}
\begin{proposition} If $L>\ln(4\eps/\lambda)$, then $\varphi_{-L}W^u(x,C_i)\subset \varphi_{[-\alpha/2,\alpha/2]}(C_k)$ 
	and $\varphi_{L}W^s(x,C_i)\subset \varphi_{[-\alpha/2,\alpha/2]}(C_j)$.
\end{proposition}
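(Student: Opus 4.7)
We prove the first inclusion $\varphi_{-L}W^u(x,C_i)\subset \varphi_{[-\alpha/2,\alpha/2]}(C_k)$; the second is analogous. The idea is that $\varphi_{-L}$ contracts $W^u(x,C_i)$ exponentially along the unstable direction, so its image concentrates near $\varphi_{-L}(x)$, which lies in the flow tube of $C_k$ by Lemma \ref{CiCj}(a); the hypothesis $L>\ln(4\eps/\lambda)$ quantifies how strong the contraction must be.

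Step 1 (contraction estimate). Fix $y\in W^u(x,C_i)$ and pass to the $T$-rectangle coordinates via $\pr_{\widetilde D_i}$ (Proposition \ref{proa}, Lemma \ref{rec-pro-lm}). Setting $\widetilde x=\pr_{\widetilde D_i}(x)$ and $\widetilde y=\pr_{\widetilde D_i}(y)$, the corollary following Proposition \ref{WSlm} identifies $\widetilde y=\eta_\mu(\widetilde x)$ for some $|\mu|\leq 2\eps$. The commutation $c_\mu a_{-L}=a_{-L}c_{\mu e^{-L}}$ then yields
\[
d_X(\varphi_{-L}(\widetilde y),\varphi_{-L}(\widetilde x))\leq |\mu|e^{-L}\leq 2\eps e^{-L}<\lambda/2
\]
by the hypothesis. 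Pulling back through the small flow times of the $\widetilde D_i$-projection, this shows $\varphi_{-L}(y)$ lies within a $\lambda$-neighborhood of the $\varphi$-orbit of $\varphi_{-L}(x)$.

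Step 2 (explicit projection to $D_k$). Using Lemma \ref{CiCj}(a), fix $k$ with $z:=\pr_{D_k}\varphi_{-L}(x)\in C_k$ and $\varphi_{-L}(x)=\varphi_\tau(z)$ for some $|\tau|\leq \alpha/2$. Parametrize $y=\pr_{D_i}\eta_\sigma(x)$ with $|\sigma|$ of order $\eps$; using the $\eta$-$\varphi$ commutation and Lemma \ref{decompo}(a) applied to $B_{s_z}C_\kappa$ with $\kappa=\sigma e^{-L-\tau}$, one computes $\varphi_{-L}(y)=\varphi_{t''+\tau-\rho}(w')$ where $w':=\Gamma g_k c_{u_z+u''}b_{s''}\in D_k$, $|u''|\leq 2|\sigma|e^{-L}<\lambda$, and $|t''|, |\rho|$ are of order $\eps^2$. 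For $\eps$ sufficiently small, $|t''+\tau-\rho|\leq \alpha/2$, so $w'=\pr_{D_k}\varphi_{-L}(y)$. A direct $s'$-coordinate calculation (as in Proposition \ref{WSlm}(a)) confirms that $w'$ shares its $s'$-coordinate with $z$, so $w'\in W^u(z,B_k)$.

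Step 3 (inclusion in $C_k$). Writing $C_k=\langle S_k,R_k\rangle_{D_k}$, decompose $z=\langle p_z,q_z\rangle_{D_k}$ with $p_z\in S_k$, $q_z\in R_k$. Because $w'$ agrees with $z$ in the $s'$-coordinate, one has $w'=\langle p_{w'},q_z\rangle_{D_k}$ where $p_{w'}$ differs from $p_z$ by a $u$-shift of size at most $|u''|<\lambda$. The main obstacle is to verify $p_{w'}\in S_k$: this set is built as a nested union of recursively generated sets in (\ref{Sik}), so one must show it is closed under $\lambda$-sized unstable perturbations. This hinges on the buffer $\eps/3$ built into the construction via the choice $V\subset S_{2\eps/3}(z_i)$; provided $\lambda$ is smaller than this buffer, $|u''|<\lambda$ fits at every recursion level. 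Once $p_{w'}\in S_k$ is established, $w'\in C_k$ follows, and therefore $\varphi_{-L}(y)\in \varphi_{[-\alpha/2,\alpha/2]}(C_k)$.
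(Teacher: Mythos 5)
Your Step 1 reproduces the paper's contraction estimate, just in the $\widetilde D_i$ coordinates rather than via the parametrization of Lemma \ref{proexa}; the upshot in both cases is that $\varphi_{-L}(y)$ stays within $\lambda$ of a short orbit segment of $\varphi_{-L}(x)$. That part is sound.

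Steps 2 and 3, however, take a detour that the paper does not take, and which does not close. The paper's proof is a pure diameter argument: having shown every $\varphi_{-L}(y)$ for $y\in W^u(x,C_i)$ lies in a ball of radius $\lambda$ around a point $\varphi_{-L+\tau}(x)$ with $|\tau|$ small, it invokes the Lebesgue number $3\lambda$ of the cover $\{\varphi_{[-\alpha/2,\alpha/2]}(K_l)\}_l$ (set up immediately after Theorem \ref{step1}) to conclude $\varphi_{-L}W^u(x,C_i)\subset\varphi_{[-\alpha/2,\alpha/2]}(K_k)\subset\varphi_{[-\alpha/2,\alpha/2]}(C_k)$ for some $k$. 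No pointwise membership of $\pr_{D_k}\varphi_{-L}(y)$ in $C_k$ is verified or needed; the whole point of introducing $\lambda$ was to make the argument insensitive to the internal shape of the $C_k$'s.

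You instead try to prove that pointwise membership directly. Step 2 is reasonable and lands $w'=\pr_{D_k}\varphi_{-L}(y)$ in $W^u(z,B_k)$; but $C_k\subsetneq B_k$ in general, so Step 3 must upgrade $B_k$ to $C_k$, and this is where the gap sits. The claim that $p_{w'}\in S_k$ because ``$S_k$ is closed under $\lambda$-sized unstable perturbations'' is asserted, not proved. The set $S_k=\bigcup_{j\geq 0}S_{k,j}$ is a nested union of recursively generated sets built by pulling back returns under $\varphi_{\pm L}$ (see \eqref{Sik}), and Lemma \ref{Riklm} only gives the \emph{outer} bound $S_{k,j}\subset S^{\eps_j}_{\eps_0}(z_k)$ with $\eps_j$ increasing to a limit $<\eps$, i.e.\ $S_k\subset B_k$ --- it provides no uniform inner margin in the $u$-direction along the recursion. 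Nothing established at this stage gives $C_k$ a $\lambda$-collar in its unstable fibers, and producing one would in effect require proving the Markov-type stability that Lemmas \ref{CiCj}--\ref{flm2} are subsequently devoted to; it cannot be had for free here.

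The takeaway: this proposition is a diameter statement, not a membership statement. Once the contraction bound is in hand, appeal to the Lebesgue number and stop; tracing the projection all the way into $C_k$ is both unnecessary and, with the tools available at this point of the construction, not achievable.
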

\begin{proof}
	Write $x=\Gamma g\in C_i$ with $g\in\PSL(2,\R)$ and fix $L>\ln(4\eps/\lambda)$ or $e^{-L}<10\eps/{\lambda}$. 
	For any $y\in W^u(x,C_i)$, $y=\langle z,x\rangle_{C_i}$, so  $y=\Gamma gc_ua_\tau$ for some $u,\tau\in[-3\eps,3\eps]$;
	see Lemma \ref{proexa}.
	Then 
	\begin{align*}
		d_X(\varphi_{-L}(y),\varphi_{-L+\tau}(x))
		&=d_X(\Gamma gc_u a_{-L+\tau},\Gamma g a_{-L+\tau})
		\leq d_\G(c_ua_{-L+\tau},a_{-L+\tau} )
		\\
		&=d_\G(c_{ue^{-L+\tau}},e)
		\leq |u|e^{-L+\tau}\leq 3\eps e^{-L+\tau}<4\eps e^{-L}<\lambda.
	\end{align*}
	This yields $\varphi_{-L}(y)\in B(\varphi_{-L+\tau}(x),\lambda)$ for all $y\in W^u(x,C_i)$
	and hence $\varphi_{-L}W^u(x,C_i)\subset B(\varphi_{-L+\tau}(x),\lambda)$. There exists a $k\in\{1,\dots,n \}$ such that
	\begin{equation*}
		\varphi_{-L}W^u(x,C_i)\subset \varphi_{[-\alpha/2,\alpha/2]}(C_k),
	\end{equation*}
	which is the former. Next, if $y\in W^s(x, C_i)$, then  $y=\langle x,z\rangle_{C_i}$ for some $z\in C_i$ and hence $y= \Gamma g b_s$ for some $s\in[-3\eps,3\eps]$; see Lemma \ref{proexa}.
	The definition of $d_X$ (see \eqref{dX}) and Lemma \ref{at} imply
	\begin{align*}
		d_X(\varphi_{L}(x),\varphi_{L}(y))&=d_X(\Gamma g a_L,\Gamma g b_s a_L)\leq d_\G(g a_L, gb_sa_L)
		\\&<d_\G(a_L,b_sa_L)=d_\G(b_{se^{-L}},e)\leq |s|e^{- L} \leq 3\eps e^{-L}<\lambda.
	\end{align*}
	This yields $\varphi_{L}(y)\in B(\varphi_{L}(x),\lambda)$ for all $y\in W^s(x,C_i)$
	and so $\varphi_LW^s(x,C_i)\subset B(\varphi_L(x),\lambda)$. 
	By the property of $\lambda$, $\varphi_LW^s(x,C_i)\subset \varphi_{[-\alpha/2,\alpha/2]}(C_j)$ for some $j\in \{1,\dots,n\}$,
	owing to Condition (d) in Theorem \ref{step1}. The latter is showed. 	
\end{proof}

The next result is helpful afterwards. 
\begin{lemma}\label{k=1}
	Let $x,y\in C_i$ and $\P_\C(x), \P_\C(y)\in C_j$. If $\P_\C(\langle x,y\rangle_{C_i})\in C_j$, then  $\P_\C(\langle x,y\rangle_{C_i})=\langle\P_\C(x),\P_\C(y)\rangle_{C_j}$.	
\end{lemma}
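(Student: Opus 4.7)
The plan is to establish $\P_\C(z) = \langle \P_\C(x), \P_\C(y)\rangle_{C_j}$, where $z := \langle x, y\rangle_{C_i}$, by first showing separately that $\P_\C(z) \in W^s(\P_\C(x), C_j)$ and $\P_\C(z) \in W^u(\P_\C(y), C_j)$, and then collapsing these via the rectangle identities of Lemma \ref{=x=ylm} inside $C_j$. The two inclusions will follow from two applications of Lemma \ref{DD'}, one in its forward and one in its backward version.

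For the stable inclusion $\P_\C(z) \in W^s(\P_\C(x), C_j)$, I would apply Lemma \ref{DD'} to the pair $(x, z)$ on the interval $[0, t_x]$, where $t_x$ is the first return time of $x$ to $\C$. Since $z \in W^s(x, C_i)$, I can write $z = x\, b_{s_0}$ in group coordinates with $|s_0|$ small, so that $\varphi_t(z) = \varphi_t(x)\, b_{s_0 e^{-t}}$ and the orbits are exponentially close for $t \geq 0$. Choosing $s : [0, t_x] \to \R$ continuous with $s(0) = 0$ and $s(t_x) = t_z$, and using $\langle x, z\rangle_{D_i} = z$ (Lemma \ref{=x=ylm}(b)), Lemma \ref{DD'} will yield $\varphi_{t_x}(z) = \langle \P_\C(x), \P_\C(z)\rangle_{D_j}$. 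Because $C_j$ is a rectangle containing both $\P_\C(x)$ and $\P_\C(z)$, the right-hand side lies in $C_j \subset D_j$; since $D_j$ is a local cross section of time $2\alpha$ and both $\varphi_{t_x}(z)$ and $\varphi_{t_z}(z) = \P_\C(z)$ lie in $D_j$, the bound $|t_x - t_z| < 2\alpha$ forces $t_x = t_z$, giving $\P_\C(z) = \langle \P_\C(x), \P_\C(z)\rangle_{C_j} \in W^s(\P_\C(x), C_j)$.

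The unstable inclusion $\P_\C(z) \in W^u(\P_\C(y), C_j)$ follows by the symmetric argument using $z = y\, c_{u_0}$ and the backward version of Lemma \ref{DD'} on $[-t_y, 0]$ that is noted in the remark following that lemma. To combine, parts (b) and (c) of Lemma \ref{=x=ylm} give $\langle \P_\C(x), \P_\C(z)\rangle_{C_j} = \P_\C(z)$ and $\langle \P_\C(z), \P_\C(y)\rangle_{C_j} = \P_\C(z)$, so part (a) yields
\[ \langle \P_\C(x), \P_\C(y)\rangle_{C_j} = \langle \P_\C(x), \langle \P_\C(z), \P_\C(y)\rangle_{C_j}\rangle_{C_j} = \langle \P_\C(x), \P_\C(z)\rangle_{C_j} = \P_\C(z). \]

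The main obstacle will be verifying the uniform-closeness hypothesis of Lemma \ref{DD'} and, in particular, the bound $|t_x - t_z| < 2\alpha$ that lets the local cross section property of $D_j$ collapse the two return times. Both rely on the $L$-scale architecture of the construction: Lemma \ref{CiCj} localizes the first returns of $x$ and $z$ to a common window of width $\sim \alpha$ around $L$, while the diameter of $C_i$ controls $|s_0|, |u_0| \lesssim \eps$; a careful choice of $\eps, \alpha$ relative to the $\delta$ supplied by Lemma \ref{DD'} is what makes the application legitimate.
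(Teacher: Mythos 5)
Your proposal is correct in its overall structure, but it takes a genuinely different and somewhat more roundabout route than the paper's proof. The paper first proves $t(x)=t(z)$ directly, by writing $z=\Gamma g_i c_{u_x}b_{s_z}\in W^s(x,C_i)$ (same $c$-coordinate as $x$), computing $\varphi_{t(x)}(z)=\Gamma g_j c_u b_{s+(s_z-s_x)e^{-t(x)}}$ explicitly, observing this lies in $D_j$, and then invoking the local-cross-section property together with $\P_{\mathscr C}(z)\in C_j\subset D_j$; with the return times identified, a single application of Lemma \ref{DD'} to the pair $(x,y)$ on $[0,t(x)]$ with the piecewise-linear reparametrization $s(t)=\min(t,t(y))$ and the crude estimate $d_X(\varphi_t(x),\varphi_{s(t)}(y))<3\alpha<\delta_2$ gives the conclusion at once. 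You instead apply Lemma \ref{DD'} twice, to $(x,z)$ forward and to $(y,z)$ backward, derive both the stable/unstable containments and the equality of return times from the output of that lemma (since $\langle\P_{\mathscr C}(x),\P_{\mathscr C}(z)\rangle_{D_j}\in C_j\subset D_j$ automatically), and then glue via parts (a)--(c) of Lemma \ref{=x=ylm}. This works and buys a slightly more conceptual picture (you never need the explicit group-coordinate computation for $\varphi_{t(x)}(z)$), at the cost of two invocations of \ref{DD'} and an extra rectangle-identity step. Both approaches need the same closeness estimate on the reparametrization interval.

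One point you should correct: you attribute the crucial bound $|t_x-t_z|<2\alpha$ to Lemma \ref{CiCj} and the ``$L$-scale architecture,'' but that lemma concerns $\varphi_{\pm L}$ and has nothing to do with first return times. The bound comes simply from condition (d) of Theorem \ref{step1} (the proper-family covering $X=\varphi_{[-\alpha,0]}(\bigcup K_i)$), which forces every first return time to lie in $(0,\alpha]$, hence $|t_x-t_z|<\alpha$. Likewise the uniform closeness hypothesis of Lemma \ref{DD'} over $[0,t_x]$ is verified by the elementary triangle-inequality estimate $d_X(\varphi_t(x),\varphi_{s(t)}(z))\le d_X(\varphi_t(x),x)+d_X(x,z)+d_X(z,\varphi_{s(t)}(z))<3\alpha$ (via Lemma \ref{tslm} and $\diam D_i<\alpha$), not by any exponential contraction coming from the $L$-scale machinery; that machinery is needed elsewhere (Lemma \ref{CiCj}, Lemma \ref{flm2}) but not here.
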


\begin{proof}
	Let $x,y\in C_i$ and $\P_\C(x), \P_\C(y)\in C_j$ and let $z=\langle x,y\rangle_{C_i}$.
	We first show that if $\P_\C(z)\in C_j$, then $t(x)=t(z)$; recall $t(x)$ and $t(y)$ are the first return times,
	$\P_\C(x)=\varphi_{t(x)}(x)$ and $\P_\C(z)=\varphi_{t(z)}(z)$. This means that
	the first return time is constant along stable manifold. 
	For, write $x=\Gamma g_ic_{u_x}b_{s_x}$ and $\P_\C(x)=\varphi_{t(x)}(x)=\Gamma g_j c_u b_s$ for $|u_x|, |s_x|, |u|, |s| <\eps$, then 
	$\varphi_{t(x)}(z)=\Gamma g_j c_u b_{s+(s_z-s_x)e^{-t(x)}}\in D_j$, which is due to $|{s+(s_z-s_x)e^{-t(x)}}|<3\eps$. 
	On the other hand,  $\P_\C(z)=\varphi_{t(z)}(z)\in C_j\subset D_j$. Since $D_j$ is a local cross section of time $\alpha$,
	and $0<t(x) , t(z)\leq \alpha$, it follows that $\P_\C(z)=\varphi_{t(x)}(z)$ and $t(z)=t(x)$. 
	It remains to show that $\varphi_{t(x)}(\langle x,y\rangle_{C_i})=\langle\varphi_{t(x)}(x),\varphi_{t(y)}(y)\rangle_{C_j}$.
	W.l.o.g, we may assume that $t(y)\leq t(x)$. 
	Let $\tau=t(x)$ and define $s: [0,\tau]\to \R$ by 
	\begin{equation*}
		s(t)=\begin{cases}t&\mbox{if}\quad t\in [0,t(y)],\\
			t(y) &\mbox{if}\quad t\in [t(y),\tau].
		\end{cases}
	\end{equation*} 
	Then $s$ is continuous and $s(0)=0$. Also $\varphi_\tau(x)=\P_\C(x)$ and $\varphi_{s(\tau)}(y)=\P_\C(y)$.
	Furthermore, for $t\in [0,\tau]$,
	\begin{align*}
		d_X(\varphi_t(x),\varphi_{s(t)}(y))
		&\leq d_X(\varphi_t(x),x)+d_X(x,y)+d_X(y,\varphi_{s(t)}(y))\\
		&\leq |t|+|s(t)|+\alpha<3\alpha<\delta_2.  
	\end{align*} 
	Apply Lemma \ref{DD'} to get $\varphi_\tau(\langle x,y\rangle_{C_i})=\langle \varphi_\tau(x),\varphi_{s(\tau)}(y)\rangle_{C_j}$,
	which proves the lemma.
\end{proof}

The next result follows from the previous lemma by induction. 
\begin{lemma}\label{Bk-lemma} Let $K$ be a positive integer and  $x,y\in X$. Suppose that $\P_{\mathscr C}^k(x)\in C_{j_k}, \P_{\mathscr C}^k(y)\in C_{j_k}$ for all $0\leq  k \leq K$.
	If $\P_\C^k(\langle x,y\rangle_{C_{j_0}})\in C_{j_k}$ for all $0\leq k\leq K$,
	then $\P_{\mathscr C}^K(\langle x,y\rangle_{C_{j_0}})=\langle \P_{\mathscr C}^K(x),\P_{\mathscr C}^K(y)\rangle_{C_{j_K}}.$
\end{lemma}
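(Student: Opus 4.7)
The plan is to prove Lemma \ref{Bk-lemma} by straightforward induction on $K$, using Lemma \ref{k=1} as both the base case and the inductive step engine. The hypotheses are tailored to make the induction mechanical: we have orbit-itineraries $\P_\C^k(x),\P_\C^k(y)\in C_{j_k}$ for the two original points, and crucially we are also \emph{given} that $\P_\C^k(\langle x,y\rangle_{C_{j_0}})\in C_{j_k}$ for every intermediate $k$. This last assumption is exactly what lets us invoke Lemma \ref{k=1} at every step without worrying about whether the bracket lands in the right rectangle after one iteration.

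The base case $K=1$ is precisely Lemma \ref{k=1} applied with $i=j_0$ and $j=j_1$: since $x,y\in C_{j_0}$, $\P_\C(x),\P_\C(y)\in C_{j_1}$, and $\P_\C(\langle x,y\rangle_{C_{j_0}})\in C_{j_1}$, the lemma yields $\P_\C(\langle x,y\rangle_{C_{j_0}})=\langle \P_\C(x),\P_\C(y)\rangle_{C_{j_1}}$.

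For the inductive step, assume that for some $1\le K-1<K$ we have
\[ \P_\C^{K-1}(\langle x,y\rangle_{C_{j_0}})=\langle \P_\C^{K-1}(x),\P_\C^{K-1}(y)\rangle_{C_{j_{K-1}}}. \]
Set $x'=\P_\C^{K-1}(x)$, $y'=\P_\C^{K-1}(y)$, and $z'=\langle x',y'\rangle_{C_{j_{K-1}}}$. By hypothesis $x',y'\in C_{j_{K-1}}$ and $\P_\C(x'),\P_\C(y')\in C_{j_K}$. The inductive hypothesis identifies $z'=\P_\C^{K-1}(\langle x,y\rangle_{C_{j_0}})$, so the assumption $\P_\C^{K}(\langle x,y\rangle_{C_{j_0}})\in C_{j_K}$ translates to $\P_\C(z')\in C_{j_K}$. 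Now Lemma \ref{k=1} applied to $x',y'\in C_{j_{K-1}}$ gives $\P_\C(z')=\langle \P_\C(x'),\P_\C(y')\rangle_{C_{j_K}}$, which is exactly $\P_\C^K(\langle x,y\rangle_{C_{j_0}})=\langle \P_\C^K(x),\P_\C^K(y)\rangle_{C_{j_K}}$, completing the induction.

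I do not anticipate a genuine obstacle here; the lemma is essentially a formal iteration of Lemma \ref{k=1}. The only thing to be careful about is to make the auxiliary hypothesis $\P_\C^k(\langle x,y\rangle_{C_{j_0}})\in C_{j_k}$ be used at the correct moment (namely, to ensure the premises of Lemma \ref{k=1} are satisfied at each level), and to keep the indexing of the itinerary $(j_0,j_1,\dots,j_K)$ consistent when passing from step $K-1$ to step $K$.
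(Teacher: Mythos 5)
Your proof is correct and takes essentially the same route as the paper: the paper states only that the lemma ``follows from the previous lemma by induction,'' and your argument supplies precisely the details of that induction on $K$ using Lemma~\ref{k=1}, with the auxiliary hypothesis $\P_\C^k(\langle x,y\rangle_{C_{j_0}})\in C_{j_k}$ invoked at each step to ensure the premises of Lemma~\ref{k=1} are met.
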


For each $j$, let 
\begin{equation}\label{I_j}
	I_j=\{i:\exists x\in \int C_j\quad\mbox{with}\quad \P_{\mathscr C}(x)\in \int C_i  \}.
\end{equation} 
For $i\in I_j$,  $C_i\cap \varphi_{[0,\alpha]}(C_j)\ne \varnothing$
and hence $B_i\cap \varphi_{[0,\alpha]}(B_j)\ne \varnothing$. By Condition (e') (see Remark \ref{e'}) on the choice of $D_i$'s, we have $C_i\subset  \varphi_{[0,2\alpha]}(D_j)$ and $\pr_{D_j}(C_i)$ makes sense. For $i\in I_j$, define
\begin{equation}
	E_{ji}= C_j\cap \pr_{D_j} (C_i).
\end{equation}
It is clear that $E_{ji}$ is a rectangle having non-empty interior and
we see that
\begin{align*}
	E_{ji}
	& =\{ x\in C_j: x=\pr_{D_j}(y)\ \mbox{ for some }\ y\in C_i \}\\
	& =\{x\in C_j: x=\varphi_\tau(y)\ \mbox{ for some }\ y\in C_i \ \mbox{ and }\  \tau\in [-2\alpha,0]\}\\
	&=\{ x\in C_j: \varphi_{\upsilon}(x)\in C_i \ \mbox{ for some }\ \upsilon\in [0,2\alpha]\}.
\end{align*}

\begin{figure}[h!]
	\begin{center}
		\begin{minipage}{0.5\linewidth}
			\centering
			\includegraphics[angle=0,width=1.2\linewidth]{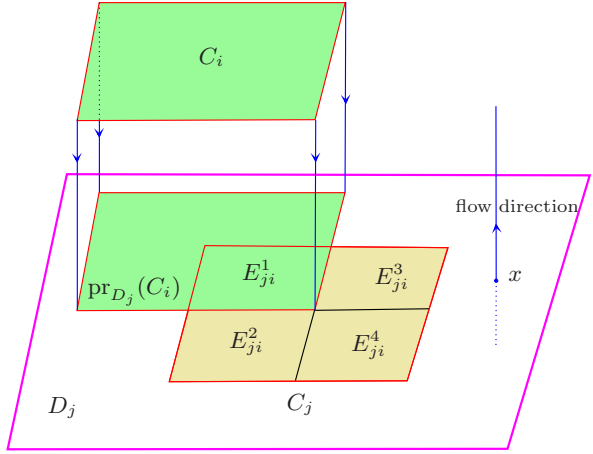}
		\end{minipage}
	\end{center}
	\caption{Projection of $C_i$ on $D_j$ and $E^{1}_{ij},\dots, E^{4}_{ij}$ partition $C_j$}\label{Eij}
\end{figure}

\begin{lemma}\label{Eji1-4}
	Pick $z\in \int E_{ji}$. The sets 
	\begin{align}\label{e1-4}  
		E_{ji}^1&\ =\ \overline {\int E_{ji}},
		\\ \label{e2-4} 
		E_{ji}^2&\ =\ \overline{\{y\in \int C_j:\langle z,y\rangle_{C_j}\in \int E_{ji},
			\langle y,z\rangle_{C_j}\notin E_{ji} \}},
		\\ \label{e3-4} 
		E_{ji}^3&\ = \ \overline{\{y\in \int C_j:\langle z,y\rangle_{C_j}\notin  E_{ji},
			\langle y,z\rangle_{C_j}\in \int E_{ji} \}},
		\\ \label{e4-4} 
		E_{ji}^4&\ =\ \overline{\{y\in \int C_j:\langle z,y\rangle_{C_j}\notin E_{ji},
			\langle y,z\rangle_{C_j}\notin E_{ji} \}}.
	\end{align}
	are rectangles intersecting only
	in their boundaries.
\end{lemma}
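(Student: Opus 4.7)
The plan is to exploit the local product structure on the rectangle $C_j$ and on its sub-rectangle $E_{ji}$. Fix the basepoint $z\in\int E_{ji}$, and for each $y\in C_j$ introduce the two coordinates $\pi^s(y):=\langle y,z\rangle_{C_j}\in W^u(z,C_j)$ and $\pi^u(y):=\langle z,y\rangle_{C_j}\in W^s(z,C_j)$. By Corollary \ref{W=rm} the pair $(\pi^s(y),\pi^u(y))$ determines $y$, and Lemma \ref{=x=ylm}(d) gives the reconstruction $y=\langle\pi^s(y),\pi^u(y)\rangle_{C_j}$; combined with continuity of $\langle\cdot,\cdot\rangle_{C_j}$ from Corollary \ref{lpslm}, the map $y\mapsto(\pi^s(y),\pi^u(y))$ is a homeomorphism of $C_j$ onto $W^u(z,C_j)\times W^s(z,C_j)$. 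In these coordinates $E_{ji}$ decomposes as the product $W^u(z,E_{ji})\times W^s(z,E_{ji})$, using that $W^s(z,E_{ji})=W^s(z,C_j)\cap E_{ji}$ (and symmetrically for the unstable factor); and since $z\in\int E_{ji}$, the coordinates $\pi^s(z),\pi^u(z)$ lie in the relative interiors of these two factors.

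Now the defining conditions translate into conditions on $(\pi^s(y),\pi^u(y))$: one has $\langle z,y\rangle_{C_j}\in E_{ji}$ iff $\pi^u(y)\in W^s(z,E_{ji})$, and $\langle y,z\rangle_{C_j}\in E_{ji}$ iff $\pi^s(y)\in W^u(z,E_{ji})$, with analogous statements for the interior versions. Hence each $E_{ji}^k$ is the closure of the set of points in $\int C_j$ whose coordinates satisfy one of the four mutually exclusive combinations ``in/in'', ``out/in'', ``in/out'', ``out/out''. The property $(R_1)$ is immediate from the closure operation. For $(R_2)$, the key identity is $\pi^s(\langle x,y\rangle_{C_j})=\pi^s(x)$ and $\pi^u(\langle x,y\rangle_{C_j})=\pi^u(y)$, which follows from Lemma \ref{=x=ylm}(a) applied twice; consequently $\langle x,y\rangle_{C_j}$ inherits its stable coordinate from $x$ and its unstable coordinate from $y$, so if $x$ and $y$ lie in the pre-closure defining set of $E_{ji}^k$, then so does $\langle x,y\rangle_{C_j}$. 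The general case $x,y\in E_{ji}^k$ is handled by approximation, using continuity of $\langle\cdot,\cdot\rangle_{C_j}$ to pass the required combination through the limit. The disjointness claim then follows at once: the four coordinate combinations are pairwise exclusive on $\int C_j$, so the pre-closure defining sets are pairwise disjoint, and the closures $E_{ji}^1,\ldots,E_{ji}^4$ can intersect only along boundary points.

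The main technical obstacle is keeping the several notions of interior consistent. The condition $\langle z,y\rangle_{C_j}\in\int E_{ji}$ uses the ambient interior of $E_{ji}$ inside the cross section $D_j$, whereas the product picture naturally uses the relative interior of $W^s(z,E_{ji})$ inside $W^s(z,C_j)$. The saving fact is that $z\in\int E_{ji}$, so the ``other'' factor in $\langle z,y\rangle_{C_j}=\langle\pi^s(z),\pi^u(y)\rangle_{C_j}$ is automatically interior; the product homeomorphism then identifies the two notions of interior, and once this identification is made precise the remaining bookkeeping is routine.
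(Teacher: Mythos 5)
Your proof is correct and is essentially the paper's argument repackaged in product-coordinate language: the core computation in both cases is Lemma \ref{=x=ylm}(a), which in your notation reads $\pi^s(\langle x,y\rangle_{C_j})=\pi^s(x)$ and $\pi^u(\langle x,y\rangle_{C_j})=\pi^u(y)$, and the paper applies this directly in the form $\langle z,\langle y_1,y_2\rangle_{C_j}\rangle_{C_j}=\langle z,y_2\rangle_{C_j}$ and $\langle\langle y_1,y_2\rangle_{C_j},z\rangle_{C_j}=\langle y_1,z\rangle_{C_j}$ to show the pre-closure sets $G_{ji}^k$ are closed under $\langle\cdot,\cdot\rangle_{C_j}$, then passes to closures by continuity and gets disjointness from the four conditions being mutually exclusive. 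The one place your scaffolding creates work rather than saving it is the interior-matching issue you flag at the end: the paper never translates $\langle z,y\rangle_{C_j}\in\int E_{ji}$ into a relative-interior statement about the factor $W^s(z,E_{ji})$, and so never has to reconcile the ambient interior with the product interior — it just propagates the condition $\langle z,y\rangle_{C_j}\in\int E_{ji}$ verbatim through the identity, which is cleaner.
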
 
\begin{proof} Since $C_i$ and $C_j$ are rectangles, it follows that 
	$E_{ji}^1$ is a rectangle by  Remark \ref{recrm}. 
	Denote by $G_{ji}^2$ the set under the closure symbol in \eqref{e2-4}.
	For any $y_1,y_2\in G_{ji}^2$, we have $y_1,y_2\in \int C_j$ and hence $\langle y_1,y_2\rangle \in \int C_j$. 
	Furthermore, $\langle z,y\rangle_{C_j}
	=\langle z,\langle y_1,y_2\rangle_{C_j}\rangle_{C_j}
	=\langle z,y_2\rangle_{C_j}	\in \int E_{ji}$, owing to $y_2\in G_{ij}^2$.
	Also,  $\langle y,z\rangle_{C_j}=\langle\langle y_1,y_2\rangle_{C_j},z\rangle_{C_j}=\langle y_1,z\rangle_{C_j}\notin \int E_{ji}$
	due to $y_1\in G_{ij}^2$. Therefore $y=\langle y_1,y_2\rangle_{C_j}\in G_{ji}^2$.
	Since $\langle \cdot, \cdot \rangle_{C_j}$ is continuous on $C_j\times C_j$, we deduce that  $E_{ij}^2$ is a rectangle. 
	Analogously, $E_{ij}^3,E_{ij}^4$ are rectangles. 
	
	In addition, 
	\[C_j=E_{ji}^1\cup E_{ji}^2\cup E_{ji}^3\cup E_{ji}^4.\]
	As $\int E_{ji}, G_{ji}^2, G_{ji}^3,G_{ji}^4$ are pairwise disjoint, 
	$E_{ji}^1, E_{ji}^2, E_{ji}^3, E_{ji}^4$ intersect only in their boundaries;
	See Figure \ref{Eij} for an illustration.
\end{proof}
\begin{lemma}\label{frankC}
	The sets
	$F_j^{a(i)}:=\overline{\bigcap_{i\in I_j} \int{E_{ji}^{a(i)}}}, a(i):I_j\to \{1,2,3,4\}$
	are rectangles and create  a cover of $C_j$. 
	Furthermore,
	elements in \[{\mathfrak C}_j=\{F_j^{a(i)}, a(i):I_j\to\{1,2,3,4 \}\}\] intersect only in their boundary,
	and \[U_j=\bigcup_{E\in {\mathfrak C_j}}{\int E} \] 
	is an open dense subset of $C_j$. 
\end{lemma}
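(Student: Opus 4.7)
The plan is to verify in turn the four assertions (each $F_j^{a}$ is a rectangle, the family covers $C_j$, distinct $F_j^{a}$ intersect only in their boundaries, and $U_j$ is open and dense in $C_j$), reducing everything to the finite collection $\{E_{ji}^{k}\}$ supplied by Lemma \ref{Eji1-4}. The key preliminary I will establish first is that for any rectangle $R$ in a Poincar\'e section $D$, if $x,y\in\int R$ then $\langle x,y\rangle_{D}\in\int R$. Using the $(u,s')$-coordinates on $D$ from Proposition \ref{recRthm}, in which the stable and unstable leaves of Corollary \ref{W=rm} are horizontal and vertical, a direct computation from Lemma \ref{proexa} shows $\langle (u_1,s_1'),(u_2,s_2')\rangle_{D}=(u_1,s_2')$; so if $U=U_1\times U_2$ and $V=V_1\times V_2$ are open product neighbourhoods of $x,y$ contained in $R$, then $U_1\times V_2$ is open, contains $\langle x,y\rangle_{D}$, and lies in $R$ by the rectangle property applied to pairs $(u,s_x')\in U$ and $(u_y,s')\in V$. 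An immediate corollary, used repeatedly below, is the interior identification $\int F_j^{a}=\bigcap_{i\in I_j}\int E_{ji}^{a(i)}$: ``$\supseteq$'' is clear since the right-hand side is open and contained in $F_j^{a}$, while ``$\subseteq$'' follows because a $C_j$-neighbourhood of any point of $\int F_j^{a}$ lies in $F_j^{a}\subseteq E_{ji}^{a(i)}$ for every $i$.

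For the rectangle property, closedness of $F_j^{a}$ is automatic. Given $x,y\in F_j^{a}$, I take sequences $x_n,y_n\in\bigcap_{i}\int E_{ji}^{a(i)}$ with $x_n\to x$ and $y_n\to y$; the preliminary gives $\langle x_n,y_n\rangle_{D_j}\in\int E_{ji}^{a(i)}$ for every $i\in I_j$, so $\langle x_n,y_n\rangle_{D_j}\in\bigcap_i\int E_{ji}^{a(i)}$, and continuity of the bracket yields $\langle x,y\rangle_{D_j}\in F_j^{a}$. For the cover, I exploit that the set $N=\bigcup_{i\in I_j,\,k\in\{1,2,3,4\}}\partial E_{ji}^{k}$ is a finite union of boundaries of full-dimensional rectangles in the two-dimensional section $C_j$, hence nowhere dense. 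Any $x\in C_j$ is then a limit of points $x_n\in C_j\setminus N$, and by Lemma \ref{Eji1-4} each $x_n$ belongs to a unique $\int E_{ji}^{a_n(i)}$ for every $i\in I_j$, defining $a_n\colon I_j\to\{1,2,3,4\}$. Only finitely many such functions exist, so pigeonhole yields some $a$ with $a_n=a$ along a subsequence, whence $x\in\overline{\bigcap_i\int E_{ji}^{a(i)}}=F_j^{a}$.

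For the intersection statement, if $a\ne a'$ I pick $i_0$ with $a(i_0)\ne a'(i_0)$; the interior identification gives $\int F_j^{a}\subseteq\int E_{ji_0}^{a(i_0)}$ and $\int F_j^{a'}\subseteq\int E_{ji_0}^{a'(i_0)}$, which are disjoint by Lemma \ref{Eji1-4}. Openness of $\int F_j^{a}$ then forces $\int F_j^{a}\cap F_j^{a'}=\varnothing$, since any point of that intersection would be a limit of points of $\int F_j^{a'}$ lying in the open set $\int F_j^{a}$, contradicting the disjointness just obtained; hence $F_j^{a}\cap F_j^{a'}\subseteq\partial F_j^{a}\cap\partial F_j^{a'}$. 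Finally $U_j$ is open as a union of open sets, and $F_j^{a}=\overline{\int F_j^{a}}$ combined with the cover property gives $\overline{U_j}\supseteq\bigcup_a F_j^{a}=C_j$, so $U_j$ is dense. The main subtlety in the whole argument is the preliminary fact about interiors of rectangles; once it is set up via the $(u,s')$-coordinates, the remainder is short combinatorial bookkeeping over the finitely many assignments $a\colon I_j\to\{1,2,3,4\}$.
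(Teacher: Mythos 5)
Your proof is correct and takes the same basic route as the paper: reduce all four assertions to the decomposition $\{E_{ji}^{k}\}$ supplied by Lemma~\ref{Eji1-4}. The paper's own proof is extremely terse (it cites Remark~\ref{recrm} for the rectangle property and dismisses the cover and boundary assertions as ``clear'' and ``obvious''), so your argument is in effect a careful completion of that sketch rather than a different method. The one genuinely useful addition on your side is the preliminary observation that $x,y\in\int R$ forces $\langle x,y\rangle_D\in\int R$, together with the resulting identification $\int F_j^{a}=\bigcap_{i\in I_j}\int E_{ji}^{a(i)}$: Remark~\ref{recrm} as stated only covers plain intersections of rectangles, whereas $F_j^{a}$ is the closure of an intersection of \emph{interiors}, and your preliminary is what makes the reduction to Remark~\ref{recrm}-style reasoning legitimate.
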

\begin{proof}
	By Lemma \ref{Eji1-4}, $E_{ji}^{a(i)}$ are rectangles, so are $F_j^{a(i)}$
	by Remark \ref{recrm} and it is clear that they are a cover of $C_j$. The last assertion is obvious.  
\end{proof} 

\begin{figure}[h!]
	\begin{center}
		\begin{minipage}{0.7\linewidth}
			\centering
			\includegraphics[angle=0,width=1.2\linewidth]{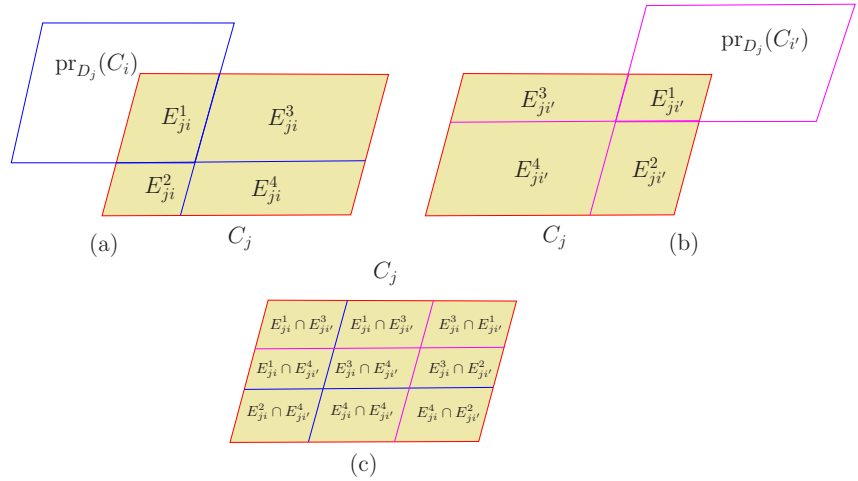}
		\end{minipage}
	\end{center}
	\caption{For $I_j=\{i,i'\}$: projections of $C_i$ and $C_{i'}$ to $D_j$ create  partitions of $C_j$ in (a) and (b);  the set  ${\mathfrak{C}_j}$ consists of nine sets in (c). }\label{Eji}
\end{figure}

Denote by $\P_{\mathscr C}$ the Poincar\'e map for proper family ${\mathscr C}=\{C_1,\dots,C_n\}$.
%\[{\tt C}_\Z=\big\{x\in {\tt C }:  \P_{\mathscr C}^k(x)\in \bigcup_{j=1}^n U_j\quad\mbox{for all}
%\quad k\in \Z \big\}. \]
For  a positive integer $N$, we define
\begin{equation}\label{CN}
	{\cal C}_N=\big\{x\in C_1\cup\dots\cup C_n:  \P_{\mathscr C}^k(x)\in \bigcup_{j=1}^n U_j\quad\mbox{for all}
	\quad k=1,\dots,N \big\}
\end{equation}
and an equivalence relation on ${\cal C}_N$ as follows. For $x,y\in {\cal C}_N$, the relation
$x\overset{N}\sim y$ means that, for every $k\in\{1,\dots, N\}$,
$\P_{\mathscr C}^k(x)$
and $\P_{\mathscr C}^k(y)$ not only lie the same $C_{j_k}\in{\mathscr C}$ but also the same member 
$F^{a(i)}_{j_k}$ of ${\mathfrak C}_{j_k}$ for some $j_k\in \{1,\dots,n\}$. 
Let $G_1,\dots, G_m$ denote the equivalence classes. Since $N$ is finite and there are finitely many $C_j$ and finitely many members in ${\mathfrak C}_j$, it follows that $m=m(N)$ is finite.

\begin{lemma}\label{reclmG} The sets $\overline G_1,\dots, \overline G_m$ are rectangles in $X$.
\end{lemma}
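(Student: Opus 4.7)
My plan is to verify the two defining properties of a rectangle for each closure $\overline{G_\ell}$: closedness inside a small-diameter local cross section $(R_1)$, and closure under the local product operation $\langle\cdot,\cdot\rangle$ $(R_2)$.

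First I would observe that the cross sections $D_j$ are pairwise disjoint (apply condition (c) of Theorem \ref{step1} at $t=0$), and hence so are the rectangles $C_j \subset B_j \subset D_j$. Consequently, each equivalence class $G_\ell$ is entirely contained in a single $C_{j_0(\ell)}$, because the equivalence $\overset{N}{\sim}$ is defined on $\mathcal{C}_N \subset C_1 \cup \cdots \cup C_n$ and points in different $C_j$'s cannot be related. Therefore $\overline{G_\ell} \subset C_{j_0} \subset D_{j_0}$, which is closed and of diameter less than $\alpha < \delta$, so $(R_1)$ and the inclusion in an admissible local cross section both follow immediately.

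For $(R_2)$, I would fix $x, y \in \overline{G_\ell}$ and show $z := \langle x, y\rangle_{D_{j_0}} = \langle x, y\rangle_{C_{j_0}} \in \overline{G_\ell}$. By continuity of $\langle \cdot, \cdot\rangle$ on $D_{j_0} \times D_{j_0}$ (Corollary \ref{lpslm}), it suffices to treat the case $x, y \in G_\ell$ and verify $z \in \overline{G_\ell}$. I would then proceed by induction on $k$ to establish that $\P^k_{\mathscr C}(z)$ is well-defined and lies in $F^{a_k}_{j_k}$ for $k = 0, 1, \dots, N$, where $((j_k, a_k))_{k=0}^N$ denotes the common itinerary of $G_\ell$. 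Granting the inductive hypothesis at step $k-1$, Lemma \ref{Bk-lemma} identifies
\[
\P^{k-1}_{\mathscr C}(z) = \bigl\langle \P^{k-1}_{\mathscr C}(x), \P^{k-1}_{\mathscr C}(y)\bigr\rangle_{C_{j_{k-1}}},
\]
and the rectangle property of $F^{a_{k-1}}_{j_{k-1}}$ (Lemma \ref{frankC}) gives $\P^{k-1}_{\mathscr C}(z) \in F^{a_{k-1}}_{j_{k-1}}$. The structural inclusion $F^{a_{k-1}}_{j_{k-1}} \subset E^1_{j_{k-1}, j_k} = \overline{\int E_{j_{k-1}, j_k}}$ (forced by the fact that $x, y \in G_\ell$ and their next iterates land in $\int C_{j_k}$, which necessitates $a_{k-1}(j_k) = 1$) then ensures that $\P_{\mathscr C}(\P^{k-1}_{\mathscr C}(z)) \in C_{j_k}$, so $\P^k_{\mathscr C}(z)$ is defined and the induction continues.

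The main obstacle I anticipate is that, even when $x, y \in G_\ell$ have iterates in the open sets $\int F^{a_k}_{j_k}$, the local product $\P^k_{\mathscr C}(z)$ can land on the boundary of $F^{a_k}_{j_k}$ (or of $C_{j_k}$), so $z$ itself may fail to be in $G_\ell$. To circumvent this, I would approximate $x$ and $y$ by points $x_n, y_n \in G_\ell$ whose iterates $\P^k_{\mathscr C}(x_n), \P^k_{\mathscr C}(y_n)$ are pushed strictly into $\int F^{a_k}_{j_k}$ for each $k \leq N$; continuity of the first-return map on the open set $\mathcal{C}_N$ ensures that $\langle x_n, y_n \rangle_{C_{j_0}}$ lies in $G_\ell$ for all sufficiently small perturbations, and then $\langle x_n, y_n \rangle_{C_{j_0}} \to z$ places $z$ in $\overline{G_\ell}$, completing $(R_2)$.
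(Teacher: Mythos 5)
Your proposal reproduces the outer shell of the paper's argument (reduce to $x,y\in G_\ell$, use Lemma \ref{Bk-lemma} and the rectangle structure of the $F^{a}_{j}$'s to propagate membership along the itinerary), but the inductive step has a genuine gap at exactly the point where the paper's proof does its real work.

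You claim that once $\P^{k-1}_{\mathscr C}(z)\in F^{a_{k-1}}_{j_{k-1}}\subset E^1_{j_{k-1},j_k}$, this ``ensures that $\P_{\mathscr C}(\P^{k-1}_{\mathscr C}(z))\in C_{j_k}$.'' This does not follow. Membership in $E_{j_{k-1},j_k}$ only says $\varphi_\upsilon(z')\in C_{j_k}$ for \emph{some} $\upsilon\in[0,2\alpha]$, not that the \emph{first} return of $z'$ is to $C_{j_k}$. A priori the orbit of $z'=\P^{k-1}_{\mathscr C}(z)$ could pass through some other $C_i$ at an earlier time, in which case $\P_{\mathscr C}(z')\in C_i\ne C_{j_k}$ and the itinerary of $z$ would diverge from that of $x$ and $y$. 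Excluding this possibility is the heart of the paper's proof: it argues by contradiction, assuming $\P^{k'}_{\mathscr C}(z)\in C_i\ne C_{i'}=C_{j_{k'}}$, and then using that both $x'=\P^{k'-1}_{\mathscr C}(x)$ and $z'$ lie in $E^1_{j\,i}\cap E^1_{j\,i'}$ to produce points in $D_i\cap\varphi_{[0,2\alpha]}(D_{i'})$ \emph{and} in $D_{i'}\cap\varphi_{[0,2\alpha]}(D_i)$, contradicting Condition~(c) of Theorem~\ref{step1}. Your induction never invokes Condition~(c) and therefore cannot close. (There is also a mild circularity: Lemma~\ref{Bk-lemma} as stated requires knowing $\P^k_{\mathscr C}(z)\in C_{j_k}$ up to the current step, which is precisely what you are trying to establish; the paper resolves this by contradiction rather than direct induction.) The final approximation argument you add is also unnecessary: $G_\ell\subset\mathcal C_N$ already consists of points whose iterates lie in the relevant interiors, and passing to $\overline{G_\ell}$ afterwards is the standard continuity step.
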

\begin{proof} We follow the proof of Lemma 7.5 in \cite{bo-symb}. 
	For $p\in\{1,\dots,m\}$ fixed, it is enough to verify that
	if $x,y\in C_{j_0}, x,y\in G_p$, then $z=\langle x,y\rangle_{C_{j_0}}\in G_p$.  Since $G_p$ is an equivalence class,
	in order to achieve $z\in {G}_p$, we must show that $x\overset{N}\sim z$ or $y\overset{N}\sim z$. 
	This means that for each $k\in \{1,\dots,N\}$, $\P_{\mathscr C}^k(x), \P_{\mathscr C}^k(y)$
	and $\P_{\mathscr C}^k(z)$  belong to the same $C_{j_k}$
	for some $j_k\in \{1,\dots,n\}$ and the same member of ${\mathfrak C}_{j_k}$. 
	This is clear for $k=0$ since $C_{j_0}$ and $F_{j_0}^{a(i)}$ are rectangles.
	Suppose on the contrary that it is true for all $0\leq k<k'$
	but not for some $k'\leq N$.  
	Then $\P_{\mathscr C}^{k'-1}(x),\,\P_{\mathscr C}^{k'-1}(y)$
	and $\P_{\mathscr C}^{k'-1}(z)$ all lie in some $C_{j}$;
	$\P_{\mathscr C}^{k'}(x),\,\P_{\mathscr C}^{k'}(y)$ lie in some $C_{i'}$ but
	$\P_{\mathscr C}^{k'}(z)$ lies in a $C_{i}\ne C_{i'}$. 
	Then by the definition of $I_{j}$ (see \eqref{I_j}), $i',i\in I_{j}$ and  $i'\ne i$.
	It follows from Lemma \ref{Bk-lemma} that 
	\begin{equation}\label{k'-1} 
		\langle \P_{\mathscr C}^{k'-1}(x),\P_{\mathscr C}^{k'-1}(y) \rangle_{C_{j}}
		=\P_{\mathscr C}^{k'-1}(\langle x,y\rangle_{C_{j_0}})=\P_{\mathscr C}^{k'-1}(z). 
	\end{equation} 
	Recall that
	$\P_{\mathscr C}^{k'-1}(x)$ and $\P_{\mathscr C}^{k'-1}(y)$ lie in the same member of ${\mathfrak C}_{j}$
	and each member of ${\mathfrak C}_{j}$ is a rectangle. 
	It follows from \eqref{k'-1} that $\P_{\mathscr C}^{k'-1}(z)$ lies in that member too. 
	Note that  $z':=\P_{\mathscr C}^{k'-1}(z)\in C_{j}\cap \pr_{D_{j}}(C_{i})=E_{j{i}}$, which is due to 
	$z'\in C_{j}$ and $\P_{\mathscr C}(z')\in C_{i}$. Then $z'\in E_{ji}^1$ implies that  
	$x'=\P_{\mathscr C}^{k'-1}(x)\in E_{ji}^1$, owing to that both $x'$ and $z'$ lie in the same member of ${\mathfrak C}_{j}$. This yields 	 $\varphi_\tau(x')\in C_{i}$ for some $0<\tau\leq 2\alpha$. 
	Since $\P_{\mathscr C}(x')=\P_{\mathscr C}^{k'}(x)\in C_{i'}$, there is an 
	$s$ with $0<s< \tau$ so that $\varphi_s(x')\in C_{i'}$, and hence \begin{equation}\label{eij}\varphi_{\tau}(x')=\varphi_{\tau-s}(\varphi_{s}(x'))\in C_{i}\cap \varphi_{[0,2\alpha]} (C_{i'})
		\subset D_{i}\cap \varphi_{[0,2\alpha]}(D_{i'}).
	\end{equation}	On the other hand, $x'\in E_{{j}i'}^1$ yields  $z'\in E_{ji'}^1$. 
	There is $0<\tau'\leq 2\alpha$ such that $\varphi_{\tau'}(z')\in C_{i'}$.
	Since $\P_\C(z')\in C_i$, it follows that $\varphi_{s'}(z')\in C_i$ for some $0<s'<\tau'$.
	As a consequence,
	\[\varphi_{\tau'}(z')=\varphi_{\tau'-s'}(\varphi_{s'}(z'))\in C_{i'}\cap \varphi_{[0,2\alpha]}(C_i)
	\subset D_{i'}\cap \varphi_{[0,2\alpha]}(D_i),\]
	which is impossible due to \eqref{eij} and  Condition (c) in Theorem \ref{step1}. Therefore, $\P_{\mathscr C}^k(z)$ lies in the same $C_{j_k}$ 
	as $\P_{\mathscr C}^k(x)$ and $\P_{\mathscr C}^k(y)$ for $0\leq k\leq N$. In addition,
	it follows from Lemma \ref{Bk-lemma} that 
	\[\P_{\mathscr C}^k(z)=\langle \P_{\mathscr C}^k(x),\P_{\mathscr C}^k(y)\rangle_{C_{j_{k}}}.\]
	Since for each $k\in\{1,\dots,n\}$, $\P_{\mathscr C}^k(x)$ and $\P_{\mathscr C}^k(y)$ 
	lie in the same member of ${\mathfrak C}_j$ and each member is a rectangle, $\P_{\mathscr C}^k(z)$ 
	must lie in that member too.
	We have shown that if $x,y\in G_p$, then $\langle x,y\rangle\in G_p$, which implies that  
	$\overline {G}_p$ is a rectangle.
	
\end{proof}

Let $\tau_1,\dots, \tau_m$ be so small distinct numbers that  $\varphi_{\tau_1}(\overline G_1),
\dots, \varphi_{\tau_m}(\overline G_m)$ are pairwise disjoint.
Using Lemma \ref{reclmG},   $M_p:=\varphi_{\tau_p}(\overline G_p),p=1,\dots,m$
are rectangles.   We are going to show that 
\[{\mathscr M}_N=\{M_1,\dots,M_{m(N)}\}  \]
is a Markov partition. 

For any $p\in \{1,\dots,N\}$, there is $i=i(q)\in \{1,\dots,n\}$ such that
$G_p\subset C_i\subset  D_i$. Write $\widehat D_p=\varphi_{\tau_p}(D_i)$ to have 
$M_p\subset \widehat D_p$. By Condition (c) in Theorem \ref{step1}, if $i\ne j$, then 
at least one of the sets
$\widehat D_i\cap \varphi_{[0,2\alpha]}(\widehat D_j)$ and $\widehat D_j\cap \varphi_{[0,2\alpha]}(\widehat D_i)$
is empty. Furthermore, $X=\bigcup_{i=1}^m \varphi_{[-\alpha,0]}(C_i)$
implies that $X=\bigcup_{j=1}^m\varphi_{[-2\alpha,0]}(M_j)$. It follows that  ${\mathscr M}_N$ is a proper family of size $2\alpha$.

The final lemma below proves Theorem \ref{mthm}.
\begin{lemma}\label{flm2}
	For  $N>\frac{L}{2\alpha}$, ${\mathscr M}_N$
	is a Markov partition of time $2\alpha$.
\end{lemma}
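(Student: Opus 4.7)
The plan is to verify three items in turn: that $\mathscr M_N$ is a proper family of size $2\alpha$, that each $M_p$ is a rectangle, and that the Markov properties $(M_s)$ and $(M_u)$ hold; only the last is substantial.

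Concerning the proper-family conditions of Definition \ref{pfdf}, the observations made immediately before the statement already give conditions (i)--(iv) once the slide parameters $\tau_p$ are chosen small enough. More precisely, $M_p\subset\widehat D_p=\varphi_{\tau_p}(D_{i(p)})$ has diameter $\leq\diam D_{i(p)}+2|\tau_p|<2\alpha$; the cover property is absorbed from $X=\bigcup_i\varphi_{[-\alpha,0]}(\int K_i)$ together with the sliding, and condition (iv) is inherited from Theorem \ref{step1}(c) after small time shifts. Each $M_p$ is a rectangle by Lemma \ref{reclmG} combined with Remark \ref{recrm}(b).

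For the Markov property it suffices to prove $(M_s)$; the argument for $(M_u)$ is the mirror image using Lemma \ref{CiCj}(b) in place of Lemma \ref{CiCj}(a). Take $x\in\int M_p\cap\mathcal T^*$ with $\P_{\mathscr M_N}(x)\in\int M_q$, and $w\in W^s(x,M_p)$; the aim is $\P_{\mathscr M_N}(w)\in M_q$. Pulling back by the slide, set $\tilde x=\varphi_{-\tau_p}(x)\in G_p\subset C_{i(p)}$ and $\tilde w=\varphi_{-\tau_p}(w)\in W^s(\tilde x,\overline G_p)\subset W^s(\tilde x,C_{i(p)})$. Applying Lemma \ref{CiCj}(a) inductively yields $\P_{\mathscr C}^k(\tilde w)\in W^s(\P_{\mathscr C}^k(\tilde x),C_{i_k})$ for every $k\geq 1$ with $\P_{\mathscr C}^k(\tilde x)\in C_{i_k}$, so $\tilde w$ follows exactly the $\mathscr C$-itinerary of $\tilde x$.

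The core step is to lift this to the $F$-decomposition: for each $k=1,\dots,N$, $\P_{\mathscr C}^k(\tilde w)$ lies in the same $F$-component of $C_{i_k}$ as $\P_{\mathscr C}^k(\tilde x)$. Each $E^a_{i_k,j}$ is a rectangle (Lemma \ref{Eji1-4}), so by the definitions \eqref{e1-4}--\eqref{e4-4} the classification of $\P_{\mathscr C}^k(\tilde w)$ depends on the location of $\langle z,\P_{\mathscr C}^k(\tilde w)\rangle_{C_{i_k}}$ and $\langle \P_{\mathscr C}^k(\tilde w),z\rangle_{C_{i_k}}$ relative to the rectangle $E_{i_k,j}$. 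The second expression coincides with the corresponding one for $\tilde x$, because $\P_{\mathscr C}^k(\tilde w)$ and $\P_{\mathscr C}^k(\tilde x)$ share the stable coordinate in $C_{i_k}$; the first is displaced only along a stable leaf, and the displacement is controlled by hyperbolic contraction through Lemma \ref{DD'} and the shadowing encoded in Lemma \ref{CiCj}. With $N>L/(2\alpha)$, the contraction accumulated over $N$ returns is enough to keep this displacement from crossing any $E_{i_k,j}$-boundary, which yields $\tilde w\overset N\sim\tilde x$ and $\P_{\mathscr C}(\tilde w)\overset N\sim\P_{\mathscr C}(\tilde x)$, hence $\P_{\mathscr M_N}(w)\in M_q$.

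The main obstacle will be precisely this last point. The equivalence relation constrains only iterates $1,\dots,N$ of $\tilde x$, whereas $\P_{\mathscr C}(\tilde w)\in\overline G_q$ demands agreement of the $F$-classification for iterates $2,\dots,N+1$ of $\tilde x$, that is, one step past the horizon that defines the equivalence. The hypothesis $N>L/(2\alpha)$ is tailored so that the hyperbolic contraction available after $N$ Poincar\'e returns suffices to bridge this one-step gap and to prevent the $F$-membership from flipping at step $N+1$; without it the Markov property may fail.
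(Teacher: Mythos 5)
Your framing of the problem is right---the Markov property reduces to showing that the $N$-step equivalence survives one extra Poincar\'e iterate---but the proof of that core step has two genuine gaps, and the role of $N>L/(2\alpha)$ is not the one you assign it.

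First, the claim that ``applying Lemma~\ref{CiCj}(a) inductively yields $\P_{\mathscr C}^k(\tilde w)\in W^s(\P_{\mathscr C}^k(\tilde x),C_{i_k})$ for every $k\geq1$'' is a misuse of that lemma. Lemma~\ref{CiCj}(a) is a statement about the fixed time $\varphi_{-L}$ (roughly $N$ Poincar\'e steps all at once), not about a single Poincar\'e return; nothing in it gives an inductive step for $\P_{\mathscr C}$. The propagation of stable leaves under $\P_{\mathscr C}$ along a common $\mathscr C$-itinerary is what Lemma~\ref{k=1} and its iterated form Lemma~\ref{Bk-lemma} provide, combined with the expansivity-based Lemma~\ref{DD'} inside their proofs; and even that uses $\tilde w\overset{N}{\sim}\tilde x$, which is \emph{given} from $\tilde w\in G_p$ (after a density reduction), not something to be re-derived.

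Second, and more seriously, the ``contraction accumulated over $N$ returns keeps the displacement from crossing any $E$-boundary'' heuristic does not constitute a proof and, on its own, would be false: a stable leaf can certainly meet the boundary of an $E_{j i}$ regardless of how contracted it is. The paper's mechanism for the step from $N$ to $N+1$ is quite different. The hypothesis $N>L/(2\alpha)$ is used so that $\varphi_{-L}(x_{N+1})$ lands within the controlled horizon: there is a $k\in\{0,\dots,N\}$ with $\pr_{D_l}\varphi_{-L}(x_{N+1})=\P_{\mathscr C}^{N-k}(x)$. One then applies Lemma~\ref{CiCj}(a) once, together with $x\overset{N}{\sim}y$ and Lemma~\ref{Bk-lemma}, to obtain $y_{N+1}:=\pr_{D_{j_{N+1}}}(\P_{\mathscr C}^N(y))\in W^s(x_{N+1},C_{j_{N+1}})$. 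After that there are two further claims, each established by contradiction: that $x_{N+1}$ and $y_{N+1}$ lie in the same member of $\mathfrak C_{j_{N+1}}$ (using Lemma~\ref{CiCj}(a) again and the rectangle structure of the $E^a_{ji}$), and that $y_{N+1}$ actually equals $\P_{\mathscr C}(y_N)$ (using Conditions (c) and (e) of Theorem~\ref{step1} on the pre-Markov family). None of these three ingredients is a soft ``contraction avoids boundaries'' argument, and you cannot do without the pre-Markov conditions (c), (e); they are exactly what rule out the bad configurations. Your write-up names the right horizon hypothesis but does not identify the mechanism it triggers, so as it stands the crucial step is unproved.
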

\begin{proof} 
	We only need to show that  ${\mathscr M}_N$ satisfies the Markov property; see Definition
	\ref{MPdn}. Denote by $\P_{\mathscr M}$ the corresponding return map of   ${\mathscr M}_N$
	and
	\[{\cal M}_N^*=\{ x\in M_1\cup\cdots \cup M_N\, :\, 
	\P_{\mathscr M}^k(x)\in \int M_1\cup\dots\cup \int M_N\ \mbox{for all}\ k\in\Z \}.\]
	We only prove $(M_s)$. 
	Recall
	\[U(M_p,M_q)=\overline{\{  z\in {\mathcal M}_N^*, z\in M_p, \P_{\mathscr M}(z)\in M_q    \}}.  \]
	We must show that $W^s(x',M_p)\subset U(M_p,M_q)$ for $x'\in U(M_p,M_q)$. 
	Since  $U(M_p,M_q)$ is closed, $U(M_p,M_q)\cap {\cal M}_N^*$
	is dense in $U(M_p,M_q)$, 
	and $W^s(x',M_p)$ varies continuously with $x'$, 
	it is enough to show the inclusion  for $x'\in  U(M_p,M_q)\cap {\mathcal  M}^*_N$. 
	Also, due to $W^s(x',M_p)\cap \varphi_{\tau_p}(G_p)$ is dense in $W^s(x',M_p)$,
	it remains to show  $y'\in U(M_p,M_q)$ for $y'=\varphi_{\tau_p}(y)$ with $y\in W^s(x,\overline G_p)\cap G_p$; 
	here $x'=\varphi_{\tau_{p}}(x)$. 
	It is enough to show that $\P_\C(y)\in W^s(\P_\C(x),G_q)$.
	
	Let \[ \P_{\mathscr C}^k(x)\in C_{j_k}\quad \mbox{for}\quad 1\leq k\leq N. \]
	Since $x\overset{N}\sim y$, $x_k:=\P_\C^k(x)$ and $y_k:=\P_\C^k(y)$
	are in the same $C_{j_k}$ and in the same member of ${\mathfrak C}_{j_k}$ for all $0\leq k\leq N$.  
	Due to $y\in W^s(x, G_p)$, $y=\langle x,z\rangle_{G_p}$ for some $z\in G_p$.
	We have $x,y,z\in C_{j_0}$ and $\P_\C(x),\P_\C(y), \P_\C(z)\in C_{j_1}$.
	Apply Lemma \ref{k=1} to have $\P_\C(y)=\langle \P_\C(x),\P_\C(z)\rangle_{C_{j_1}}$
	and also $\P_\C(y)\in W^s(\P_\C(x),C_{j_1})$.
	In order to achieve $\P_\C(y)\in W^s(\P_\C(x),G_q)$, we must show $\P_\C(y)\in G_q$,
	or equivalently,  $\P_\C(y)\overset{N}\sim \P_\C(x)$.
	Owing to the fact that $y\overset{N}\sim x$, 
	it remains to show $\P_\C^{N+1}(x)$ and $\P_\C^{N+1}(y)$ are in the same $C_{j_{N+1}}$
	and the same member of ${\mathfrak C}_{j_{N+1}}$.
	
	Let \[x_{N+1}=\P_{\mathscr C}^{N+1}(x)\in C_{j_{N+1}} \quad\mbox{and}\quad y_{N+1}=\pr_{D_{j_{N+1}}}(\P_\C^N(y))\in D_{j_{N+1}}.
	\]
	We first claim that
	\begin{equation}\label{y1}
		y_{N+1}\in W^s(x_{N+1},C_{j_{N+1}}).
	\end{equation}
	By Lemma \ref{CiCj}\,(a), there is $l\in\{1,\dots,n \}$ such that $\varphi_{-L}(x_{N+1})\in\varphi_{[-\alpha/2,\alpha/2]} (C_l)$
	and
	\[\pr_{D_{j_{N+1}}}\varphi_{L}W^s(\pr_{D_l}\varphi_{-L}(x_{N+1}),C_l)
	\subset W^s(x_{N+1},C_{j_{N+1}}). \]
	For $N>\frac{L}{2\alpha}$, there is a $k\in \{ 0,\dots,N\}$ such that
	\[\pr_{D_l}\varphi_{-L}(x_{N+1})=\P_{\mathscr C}^{-k}(x_N)=\P_\C^{N-k}(x). \] 
	Furthermore, using $x\overset{N}\sim y$
	and $y\in W^s(x, C_{j_0})$, it follows from Lemma \ref{Bk-lemma} that
	$\P_{\mathscr C}^{-k}(y_N)\in W^s(\P_{\mathscr C}^{-k}(x_N), C_l)$, and also   
	\[y_{N+1}=\pr_{D_{j_{N+1}}}\varphi_{L}(\P_{\mathscr C}^{-k}(y_N))\in W^s(x_{N+1},C_{j_{N+1}}),\]
	which is \eqref{y1}. As a result, $y_{N+1}\in C_{j_{N+1}}$.

	Next, we check that $x_{N+1},y_{N+1}$ are in the same member of ${\mathfrak C}_{j_{N+1}}$ and
	also
	$y_{N+1}=\P_{\mathscr C}(y_N)$. We first suppose that $x_{N+1},y_{N+1}$ are not in the same member of ${\mathfrak C}_{j_{N+1}}$. Then  there is some $i\in I_{j_{N+1}}$
	for which $x_{N+1}$ and $y_{N+1}$ are in different ${E_{j_{N+1}i}^a}$'s; see Figure \ref{Eji}. 
	Taking $z'\in \int E_{j_{N+1}i}$, due to \eqref{y1},  
	$\langle y_{N+1},z'\rangle_{C_{j_1}}=
	\langle  x_{N+1},z'\rangle_{C_{j,N+1}}$ by Lemma \ref{=x=ylm}. Since $x_{N+1}$ and $y_{N+1}$
	are in different $E_{j_{N+1}}^a$'s, we may assume that 
	$\hat x:=\langle z',x_{N+1} \rangle_{C_{j_{N+1}}}\in E_{j_{N+1}i}$
	and $\hat y:=\langle z',y_{N+1}\rangle_{C_{j_{N+1}}}\notin E_{j_{N+1}i}$; see \eqref{e1-4}-\eqref{e4-4}.
	Then $\bar x=\pr_{D_{i}}(\hat x)\in C_i$ and $\bar y=\pr_{D_i}(\hat y)\notin C_i$. 
	Let $y_{N+1}'=\pr_{D_{i}}(y_{N+1})$ and $x_{N+1}'=\pr_{D_{i}}(x_{N+1})$.
	Using Lemma \ref{CiCj}\,(a), there is $s\in\{1,\dots,n\}$ so that
	$\varphi_{-L} (\bar x)\in \varphi_{[-\alpha/2,\alpha/2]} (C_s)$
	and
	\begin{equation}\label{wu}  \pr_{D_i}\varphi_{L} W^s(\pr_{D_s}\varphi_{-L}(\bar x),C_s)\subset W^s(\pr_{D_i}(\bar x),C_i).
	\end{equation}  
	Furthermore, due to $N>\frac{L}{2\alpha}$, there is $k'\in \{0,\dots, N\}$ such that $\P_\C^{N-k'}(x)=\P_{\mathscr C}^{-k'}(x_{N+1})=\pr_{D_s}\varphi_{-L}(x_{N+1}')\in C_s$. 
	Also  $\P_\C^{N-k'}(y)=\P_{\mathscr C}^{-k'}(y_N)=\P_{\mathscr C}^{-k'}(y_N)\in C_s$
	since $y\overset{_{N}}{\sim} x$. 
	Owing to $$\hat y=\langle z', y_{N+1}\rangle_{D_{j_{N+1}}}
	=\langle\langle z', x_{N+1}\rangle_{D_{j_{N+1}}},y_{N+1}\rangle_{D_{j_{N+1}}}
	= \langle \hat x,y_{N+1}\rangle_{D_{j_{N+1}}},$$ 
	we get
	$$\bar y=\pr_{D_i}(\hat y)=\langle \pr_{D_i}(\hat x),\pr_{D_i}(y_{N+1})\rangle_{D_i}=\langle \bar x,y'_{N+1}\rangle_{D_i}$$ by Lemma \ref{DD'}. Also, apply Lemma \ref{DD'} again to obtain
	\begin{align*}
		\pr_{D_s}\varphi_{-L}(\bar y)=\ &\langle \pr_{D_s}\varphi_{-L}(\bar x),\pr_{D_s}\varphi_{-L}(y'_{N+1})\rangle_{D_s}
		\\
		=\ &\langle \pr_{D_s}\varphi_{-L}(\bar x),\P_{\mathscr C}^{-k'}(y_N)\rangle_{C_s}
		\in W^s(\pr_{D_s}\varphi_{-L}(\bar x),C_s).
	\end{align*}
	It follows from \eqref{wu} that 
	\[\bar y=\pr_{D_i}\varphi_{L}(\pr_{D_s}\varphi_{-L}(\bar y))\in W^s(\bar x,C_i)\subset C_i,\]
	which contradicts $\bar y\notin C_i$ and hence $x_{N+1}, y_{N+1}$ are in the same member of ${\mathfrak C}_{j_{N+1}}$.
	
	Next, we verify that $y_{N+1}=\P_\C(y_N)$. Suppose on the contrary that $\P_{\mathscr{C}}(y_N)=\bar y_{N+1}\in C_k$ for some $k\ne j_{N+1}$.
	Then $y_N\in E_{j_Nk}$ implies that $x_N\in E_{j_Nk}$ since $x_N, y_N$ belong to the same member of
	${\mathfrak C}_{j_N}$. There is $\tau\in(t(x_N),2\alpha]$ such that 
	$\varphi_\tau(x_N)\in C_k$. This implies 
	\begin{equation}\label{eql}
		\varphi_\tau(x_N)=\varphi_{\tau-t(x_N)}(x_{N+1})\in C_k\cap \varphi_{[0,2\alpha]}(C_{j_{N+1}})\subset D_k\cap \varphi_{[0,2\alpha]}(D_{j_{N+1}}).
	\end{equation} 
	On the other hand, $y_{N+1}=\varphi_s(y_N)\in D_{j_{N+1}}$ for some $s\in(0, 2\alpha]$
	and $\bar y_{N+1}=\varphi_{t(y_N)}(y_N)\in C_k$ with $0<t(y_N)<s$.
	Then 
	\begin{equation}
		y_{N+1}=\varphi_{s}(y_N)=\varphi_{s-t(y_N)}(\bar y_{N+1})\in 
		D_{j_{N+1}}\cap \varphi_{[0, 2\alpha]}(D_k),
	\end{equation}
	which is impossible due to \eqref{eql} and Condition (e) in Theorem \ref{step1}. Therefore, $y_{N+1}=\P_{\mathscr C}(y_N)$ and so $y_1\overset{N}\sim x_1$. 
	To summarize, we have proved that $\P_{\mathscr C}(y)\in G_q$ as well as $y_1\in W^s(x_1, G_q)$. This completes
	the proof of $(M_s)$. The proof of $(M_u)$ is analogous.

\end{proof}

\noindent {\bf Acknowledgments:} The author thanks Prof. Markus Kunze for hospitality during his stay at Cologne University.
A part of this work was done while the author was working at Vietnam Institute for Advanced Study in Mathematics (VIASM).
 He would like to thank VIASM for its wonderful working condition. This work is supported by Vietnam’s National Foundation for Science and Technology Development (Grant No. 101.02-2020.21).

%\section{Comments}

\end{document}